\theoremstyle{plain}
\newtheorem{theorem}{Theorem}[section]
\newtheorem{lemma}[theorem]{Lemma}
\newtheorem{corollary}[theorem]{Corollary}
\newtheorem{proposition}[theorem]{Proposition}
\newtheorem{tlemma}[theorem]{Technical Lemma}
\newtheorem{tproposition}[theorem]{Technical Proposition}
\newtheorem{claim}[theorem]{Claim}
\theoremstyle{definition}
\newtheorem{definition}[theorem]{Definition}
\newtheorem{hypo}[theorem]{Hypothesis}
\theoremstyle{remark}
\newtheorem{remark}[theorem]{Remark}
\numberwithin{equation}{section}
\newcounter{lil1}
\newenvironment{step}
{\begin{list} { \bf Step (\Roman{lil1})}
{ \usecounter{lil1}
\setlength{\leftmargin}{0.0cm}
\setlength{\topsep}{0.2cm}
\setlength{\itemsep}{0.0cm}
\setlength{\parsep}{0.1cm}
\setlength{\itemindent}{0.8cm}
\setlength{\parskip}{0.0cm}}}
{\end{list}}
\newcounter{lil33}
\newenvironment{stepinner}
{\begin{list} { \bf Step (\alph{lil33})}
{ \usecounter{lil33}
\setlength{\leftmargin}{0.0cm}
\setlength{\topsep}{0.2cm}
\setlength{\itemsep}{0.0cm}
\setlength{\parsep}{0.1cm}
\setlength{\itemindent}{0.8cm}
\setlength{\parskip}{0.0cm}}}
{\end{list}}
\newcounter{lil1q}
\newenvironment{steps}
{\begin{list} { \bf Step (\Roman{lil1q}).}
{ \usecounter{lil1q}
\setlength{\leftmargin}{0.0cm}
\setlength{\topsep}{0.2cm}
\setlength{\itemsep}{0.0cm}
\setlength{\parsep}{0.1cm}
\setlength{\itemindent}{0.8cm}
\setlength{\parskip}{0.0cm}}}
{\end{list}}
\newcommand{\eps}{\varepsilon}
\newcommand{\MA}{\mathfrak{A}}
\newcommand{\ul}{\underline}
\newcommand{\ol}{\overline}
\newcommand{\Reins}{R_0}
\newcommand{\Rzwei}{ R_1}
\newcommand{\Rdrei}{R_2}
\newcommand{\subX}{\mathcal{X}_\mathfrak{A}(\Reins,\Rzwei,\Rdrei)}
\newcommand{\stoppt}{\phi_\kk(h(\eta,\xi,t))\;}
\newcommand{\stopp}{\phi_\kk(h(\eta,\xi,s))\;}
\newcommand{\stose}{\phi_\kk(h(\eta_1,\xi_1,s))\;}
\newcommand{\stosz}{\phi_\kk(h(\eta_2,\xi_2,s))\;}
\newcommand{\vardelta}{\nu}
\newcommand{\sig}{1}
\newcommand{\pp}{p+1}
\newcommand{\TInt}{T}
\newcommand{\kk}{\kappa}
\newcommand{\buk}{\bar u_\kk}
\newcommand{\bvk}{\bar v_\kk}
\newcommand{\uk}{u_\kk}
\newcommand{\vk}{v_\kk}
\newcommand{\wk}{v_\kk}
\newcommand{\textkappa}{\mu}
\newcommand{\baray}{\begin{array}{rcl}}
\newcommand{\earay}{\end{array}}
\newcommand{\barray}{\begin{array}{rcl}}
\newcommand{\earray}{\end{array}}
\newcommand\dela[1]{}
\newcommand{\bcase}{\begin{cases}}
\newcommand{\ecase}{\end{cases}}
\newcommand{\ns}{{n^\ast}}
 \newcommand{\LLm}{{2}}
\newcommand\BY{\mathbb{Y}}
\newcommand\BZ{\mathbb{Z}}
\newcommand\BH{\mathbb{H_\rho}}
\newcommand\BX{\mathbb{Z}}
\newcommand{\ppp}{m}
\newcommand{\DeltaA}{A}
\newcommand\del[1]{}
\newcommand{\pst}{{p^\star}}
\newcommand\del[1]{}
\def\eps{\varepsilon}
\newcommand{\Law}{\mbox{Law}}
\newcommand{\lqq}{\lefteqn}
\newcommand{\la}{\langle}
\newcommand{\ra}{\rangle}
\newcommand{\LL}{{\rm I \kern -0.2em L}}
\newcommand{\ep} {\varepsilon }
\newcommand{\be} {\begin{enumerate} }
\newcommand{\ee} {\end{enumerate} }
\newcommand{\CO}{{{ \mathcal O }}}
\newcommand{\CH}{{{ \mathcal H }}}
\newcommand{\CS}{{{ \mathcal S }}}
\newcommand{\CG}{{{ \mathcal G }}}
\newcommand{\CB}{{{ \mathcal B }}}
\newcommand{\CM}{{{ \mathcal M }}}
\newcommand{\BF}{{{ \mathbb{F} }}}
\newcommand{\CF}{{{ \mathcal F }}}
\newcommand{\CN}{{{ \mathcal N }}}
\newcommand{\WW}{{\mathbb{W}}}
\newcommand{\NN}{\mathbb{N}}
\newcommand{\PP}{{\mathbb{P}}}
\newcommand{\EE}{ \mathbb{E} }
\newcommand{\DEQS}{\begin{eqnarray*} }
\newcommand{\EEQS}{\end{eqnarray*} }
\newcommand{\DEQSZ}{\begin{eqnarray} }
\newcommand{\EEQSZ}{\end{eqnarray} }
\newcommand{\DEQ}{\begin{eqnarray}}
\newcommand{\EEQ}{\end{eqnarray}}
\newcommand{\Dcal} {{\mathcal D}}
\newcommand{\Fcal} {{\mathcal F}}
\newcommand{\Gcal} {{\mathcal G}}
\newcommand{\Hcal} {{\mathcal H}}
\newcommand{\Mcal} {{\mathcal M}}
\newcommand{\Ocal} {{\mathcal O}}
\newcommand{\Vcal} {{\mathcal V}}
\newcommand{\Xcal} {{\mathcal X}}
\newcommand{\Afrak} {{\mathfrak A}}
\newcommand{\N}{\mathbb{N}}
\renewcommand{\P}{\mathbb{P}}
\newcommand{\Eb}{\mathbb{E}}
\newcommand{\X}{\mathbb{X}}
\newcommand{\D}{\mathbb{D}}
\begin{document}

\title[The stochastic Klausmeier system]{The stochastic Klausmeier system and a stochastic Schauder-Tychonoff type theorem}
\author[E. Hausenblas]{Erika Hausenblas}
\address{Montanuniversit\"{a}t Leoben\\
Department Mathematik und Informationstechnologie\\
Franz Josef Stra{\ss}e 18\\
8700 Leoben\\
Austria}
\email{erika.hausenblas@unileoben.ac.at}

\author[J. M. T\"olle]{Jonas M. T\"{o}lle}
\address{Aalto University\\
Department of Mathematics and Systems Analysis\\
PO Box 11100 (Otakaari 1, Espoo)\\
00076 Aalto\\
Finland}
\email{jonas.tolle@aalto.fi}
\date{\today}
\begin{abstract}
On the one hand, we investigate the existence and pathwise uniqueness of a nonnegative martingale solution to the stochastic evolution system of nonlinear advection-diffusion equations proposed by Klausmeier with Gaussian multiplicative noise. On the other hand, we present and verify a general stochastic version of the Schauder-Tychonoff fixed point theorem, as its application is an essential step for showing existence of the solution to the stochastic Klausmeier system. The analysis of the system is based both on variational and semigroup techniques. We also discuss additional regularity properties of the solution.
\end{abstract}

\keywords{stochastic Klausmeier evolution system, stochastic Schauder-Tychonoff type theorem, pattern formation in ecology, nonlinear stochastic partial differential equation, flows in porous media, pathwise uniqueness.}
\subjclass[2010]{Primary 35K57, 60H15; Secondary 37N25, 47H10, 76S05, 92C15}

\thanks{E.H. acknowledges partial funding by FWF (Austrian Science Foundation) grant P 28010 ``Mathematische Analyse von Fl\"ussigkeitskristallen mit stochastischer St\"orung''.
J.M.T. acknowledges support by the Academy of Finland and the European Research Council (ERC) under the European Union's Horizon 2020 research and innovation programme (grant agreements no. 741487 and no. 818437).\\
This work is licensed under the Creative Commons Attribution 4.0 International License. To view a copy of this license, visit \url{http://creativecommons.org/licenses/by/4.0/} or send a letter to Creative Commons, PO Box 1866, Mountain View, CA 94042, USA. \includegraphics[height=1em]{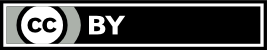}
Original work published in Potential Analysis (2023+), 62 pp., \url{https://doi.org/10.1007/s11118-023-10107-3}.}

\maketitle

{\small \tableofcontents}

\section{Introduction}

Pattern formation at the ecosystem level has recently gained a lot of attention in spatial ecology and its mathematical modeling.
Theoretical models are a widely used tool for studying e.g.\ banded vegetation patterns. One important model is the system of advection-diffusion equations proposed by Klausmeier \cite{klausmeier}.
This model for vegetation dynamics in semi-deserted areas is based on the ``water redistribution hypothesis'', using the idea that rain water in dry regions is eventually infiltrated into the ground. Water falling down onto bare ground mostly runs off downhill toward the next patch of vegetation which provides a better infiltration capacity. The soil in such regions of the world as Australia, Africa, and Southwestern North America is prone to nonlocality of water uptake due to the semi-arid environment. Studies of the properties of the system
and further developments can be found in e.g. \cite{sherratt,nadia,klaus1,Sherratt:2005,Sherratt:2010,Sherratt:2011}.

The Klausmeier system is a generalization of the so-called Gray-Scott system \cite{GrayScott_original} (see also \cite{segel,skt} for earlier accounts employing similar models) which already exhibits effects similar to Turing patterns \cite{turing,kepper,Maini,Dillon}, see for instance the discussion in \cite{vanderStelt:2012gs}. We refer to \cite{murray1,murray2,levin2} for further reading on pattern formation in biology. A discussion of reaction-diffusion type equations with motivation from biology can be found in \cite{Perthame}.

The underlying mathematics of this model is given by a pair of solutions $(u,v)$ to a partial differential equation system coupled by a nonlinearity. The function $u$ represents the surface water content  and $v$ represents the biomass density of the plants.
In order to model the spread of water on a terrain without a specific preference for the direction in which the water flows, the original models were extended by replacing the diffusion operator by a nonlinear porous media operator, which represents the situation that the ground is partially filled by interconnected pores conveying fluid under an applied pressure gradient.

To this end, let $\CO\subset \mathbb{R}^d$ be a bounded domain, $d=1,2,3$, having $C^2$-boundary or $\CO=[0,1]^d$.
Consider the following problem
\begin{align} \label{equ1s}
\left\{ \barray
 \dot {u}(t)  &=& r_u   \Delta   u^{[\gamma]}(t)
 - \chi u(t)\,v^2(t) +k-fu(t) , \quad  t> 0,
\quad u(0)= u_0 ,  
\\
\dot{v}(t) &=& r_v \Delta v(t) + u(t)\,v^2(t) -gv(t) ,\quad   t> 0,
\quad
{v}(0) = v_0 ,
\earray\right.
\end{align} 
with Neumann (or periodic when $\CO$ is a rectangular domain) boundary conditions and initial conditions $u(0)=u_0$
and $v(0)=v_0$. Here, $z^{[\gamma]}:=\vert z\vert^{\gamma-1}z$, $\gamma>1$, $z\in\mathbb{R}$, and further,
$r_u$, $r_v$, $\chi$, $k$, $f$ and $g$ denote positive constants.

The deterministic or macroscopic model is derived from the limiting behavior of interacting diffusions --- the so-called microscopic model, see \cite{kotelenez}. When applying the strong law of large numbers and passing from the microscopic to the macroscopic equation, one is neglecting the random fluctuations. In order to get a more realistic model, it is necessary to add noise, which represents the randomness of the natural environment or the fluctuation of parameters in the model. The introduction to stochasticity to ecological models is supported by the arguments of \cite{Hastings}.
Due to the Wong-Zakai principle, this leads to the representation of the noise as a Gaussian stochastic perturbation with Stratonovich increment, see \cite[Section 3.4]{Flandoli} and \cite{MaZhu2019,WongZakai}. One important consequence is the preservation of energy in the noisy system.

In practice, we are investigating the system \eqref{equ1s}
driven by a multiplicative infinite dimensional Wiener process. Under suitable regularity assumptions on the initial data $(u_0,v_0)$, on $\gamma$ and on the perturbation by noise which we shall specify later, we find that there exists a nonnegative (martingale) solution to the system \eqref{equ1n} in dimensions $d=1,2,3$, see our main result Theorem \ref{mainresult}. More precisely, we prove existence of nonnegative solutions to the noisy systems \eqref{eq:uIto}--\eqref{eq:vIto} (It\^o noise), \eqref{equ1ss}--\eqref{eqv1ss} (Stratonovich noise) respectively. For $d=1$, we show that the solution to \eqref{eq:uIto}--\eqref{eq:vIto} is pathwise unique and that there exists a strong solution in the stochastic sense.
Thus, we are actually considering the following system for $\gamma>1$
\begin{align} \label{equ1n}
\left\{ \barray
 \dot {u}(t)  &=& r_u   \Delta   u^{[\gamma]}(t)  - \chi u(t)\,v^2(t) +u(t)\, \Xi_1
 (t), \quad  t> 0,\quad u(0)=u_0,
\\  \dot{v}(t) &=& r_v \Delta v(t) + u(t)\,v^2(t)+ v(t)\, \Xi_2(t)
,\quad   t> 0,\quad v(0)=v_0.
\earray\right.
\end{align} 
Here, $\Xi_1$ and $\Xi_2$ denote independent random Gaussian noises specified to be certain Banach space valued linear Stratonovich Wiener noises later on. Similar stochastic equations with L\'evy noise have been studied e.g. in \cite{DebusscheHoegeleImkeller2013,reactdiff}. We note that due to the coupling and the nonlinearity, the solutions are not expected to be stochastically independent.

Nonlinear diffusion systems perturbed by noise exhibit certain improved well-posedness and regularity properties when compared to their deterministic counterparts,
see, among others, \cite{Bianchi_etal, CrauelFlandoli, Flandoli}. Another feature of linear multiplicative noise is that it preserves the nonnegativity of the initial condition \cite{TessitoreZabczyk1998,BDPR3}. Some applications may demand more complicated noises with nonlinear structure or coupling, however, we choose a linear dependence on the noise as a first step toward more general models.

The challenging problem in the system given by \eqref{equ1s}, respective the noisy system \eqref{equ1n}, is the nonlinearity appearing once with a negative sign and once with a positive sign.
Choosing different speeds of diffusion seems rather natural, as it well known that the characteristic pattern
formation may not take place if $r_u=r_v$, cf. \cite{grayscott}. The stochastic perturbation, however, does not restrict pattern formation \cite{Cao,Woolley}, noting that our choice of noise exhibits rather small intensity and damped high frequency modes.
On the other hand, the nonlinearity is not of variational structure, such that energy methods are not available for the analysis, and neither the maximum principle nor  Gronwall type arguments work.
More direct deterministic (pathwise) methods may also fail in general as the equation for $v$ is non-monotone.

Another difficulty is posed by the nonlinear porous media diffusion operator in the equation for $u$. Typically, it is studied with the help of variational monotonicity methods, see e.g. \cite{weiroeckner,BDPR2016}. To the nonlinear diffusion, neither semigroup methods nor Galerkin approximation can be applied directly without greater effort. Other approaches are given for instance in \cite{Gess2017,vazquez2007porous,Otto,FehrmanGess2021}, to mention a few.

Our motivation to prove a probabilistic Schauder-Tychonoff type fixed point theorem originates from the aim to show existence of a solution to the stochastic counterpart \eqref{equ1n} of the system \eqref{equ1s}. Our result is for instance also applied in \cite{HausenblasPanda1,HausenblasPanda2}.
As standard methods for showing existence and uniqueness of solutions to stochastic partial differential equations cannot be applied directly, we perform a fixed point iteration using a mix of so-called variational and semigroup methods with nonlinear perturbation. Here, a precise analysis of regularity properties of nonnegative solutions to regularized and localized
subsystems with truncated nonlinearities needs to be conducted. Together with the continuous dependence of each subsystem on the other one, we obtain the fixed point, locally in time, by weak compactness and an appropriate choice of energy spaces. The result is completed by a stopping time gluing argument.

Apart from the probabilistic structure,
the main novelty is that we construct the fixed point in the nonlinearity and not in the noise coefficient. This works well because our system is coupled precisely in the nonlinearity. The main task consists then in the analysis of regularity and invariance properties of a regularized system with ``frozen'' nonlinearity (not being fully linearized, however, as the porous media operator remains).

It may be possible to apply this method in the future to other nonlinear systems, as for example, systems with nonlinear convection-terms as systems with transport or Navier-Stokes type systems. It is certainly possible to apply the method to linear cross diffusion systems.
See \cite{boundednessbyentropymethod} for a recent work proving existence of martingale solutions to stochastic cross diffusion systems, however, their approach relies on other methods that do not cover the porous media case. See \cite{MASLOWSKI2003} for a previous work using the classical Schauder theorem for stochastic evolution equations with fractional Gaussian noise.

Given higher regularity of the initial data, we also show the pathwise uniqueness of the solution to \eqref{equ1n} for spatial dimension $d=1$.
As a consequence of the celebrated result by Yamada and Watanabe \cite{YW1,WY2}, we obtain existence and uniqueness of a strong solution, see Corollary \ref{cor:strong}. We refer to \cite{Schmalfuss1997,Capinski1993,CP1997,BM2013} for previous works that employ a similar strategy for proving the existence of a unique strong solution.

\subsection*{Structure of the paper}

The stochastic Schauder-Tychonoff type Theorem \ref{ther_main} is presented in Section \ref{schauder}. In the subsequent section, i.e. Section \ref{klausmeier}, we apply this fixed point theorem to show the existence of a martingale solution to the stochastic counterpart \eqref{equ1n} of the system \eqref{equ1s}. Section \ref{klausmeier} contains our main result Theorem \ref{mainresult}. Section \ref{sec:proof_of_schauder} is devoted to the proof of Theorem \ref{ther_main}.
In Section \ref{technics}, we prove several technical propositions that are need for the main result in Section \ref{klausmeier}. Section \ref{pathwise} contains the proof of pathwise uniqueness, that is, Theorem \ref{thm_path_uniq}. Some auxiliary results are collected in Appendices \ref{app:A}, \ref{dbouley-space} and \ref{sec:BDG}.

\section{The stochastic Schauder-Tychonoff type theorem}\label{schauder}

Let us fix some notation.
Let
$U$ be a Banach space. Let $\Ocal\subset \mathbb{R}^d$ be an open domain, $d\ge 1$. Let $\X\subset\{\eta:[0,T]\to E\subset\Dcal^{\prime}(\Ocal)\}$
be a Banach function space\footnote{Here, $\Dcal^\prime(\Ocal)$ denotes the space of Schwartz distributions on $\Ocal$, that is, the topological dual space of smooth functions with compact support $\Dcal(\Ocal)=C_0^\infty(\Ocal)$.}, let $\X^{\prime}\subset\{\eta:[0,T]\to E\subset\Dcal^{\prime}(\Ocal)\}$
be a reflexive Banach function space embedded compactly and densely into $\X$. In both cases, the trajectories take values in a Banach function space $E$ over the spatial domain $\Ocal$, where we assume that $E$ has the UMD property, see \cite{vanneerven}.
Let $\Afrak=(\Omega,\Fcal,\mathbb{F},\P)$ be a filtered probability space
with filtration $\mathbb{F}=(\Fcal_{t})_{t\in [0,T]}$ satisfying the usual conditions.
Let $H$ be a separable Hilbert space and $(W(t))_{t\in [0,T]}$ be a Wiener process\footnote{That is, a $Q$-Wiener process, see e.g. \cite{DaPrZa:2nd} for this notion.} in $H$ with a linear, nonnegative definite, symmetric trace class covariance
operator $Q:H\to H$ such that $W$ has the representation
$$
W(t)=\sum_{i\in\mathbb{I}} Q^\frac 12 \psi_i\beta_i(t),\quad t\in [0,T],
$$
where $\{\psi_i:i\in \mathbb{I}\}$ is a complete orthonormal system in $H$, $\mathbb{I}$ a suitably chosen countable index set, and $\{\beta_i:i\in\mathbb{I}\}$ a family of independent real-valued standard Brownian motions on $[0,T]$ modeled in $\Afrak=(\Omega,\Fcal,\mathbb{F},\P)$. Due to \cite[Proposition 4.7, p. 85]{DaPrZa:2nd}, this representation does not pose a restriction.

For $m\ge1$, define the collection of processes
\begin{equation}\label{eq:MMdef}
\begin{aligned}  \Mcal_{\Afrak}^{m}(\X)
:= & \Big\{ \xi:\Omega\times[0,T]\to E\;\colon\;\\
&\qquad\text{\ensuremath{\xi} is \ensuremath{\mathbb{F}}-progressively measurable}\;\text{and}\;\Eb\vert\xi\vert_{\X}^{m}<\infty\Big\}
\end{aligned}
\end{equation}
equipped with the semi-norm
\[
\vert\xi\vert_{\Mcal_{\Afrak}^{m}(\X)}:=(\Eb\vert\xi\vert_{\X}^{m})^{1/m},\quad\xi\in\Mcal_{\Afrak}^{m}(\X).
\]

For fixed $\Afrak$, $W$, $m>1$, we define the operator $\Vcal=\Vcal_{\Afrak,W}:\Mcal_{\Afrak}^{m}(\X)\times L^m(\Omega,\Fcal_0,\P;E)\to\Mcal_{\Afrak}^{m}(\X)$
for $\xi\in\Mcal_{\MA}^m(\X)$ via
$$\Vcal(\xi):=\Vcal(\xi,w_0):=w,$$
where $w$ is the solution to the following It\^o stochastic partial differential equation (SPDE)
\begin{align} \label{spdes}
dw(t) =&\left(\DeltaA w(t)+ F(\xi,t)\right)\, dt +\Sigma(w(t))\,dW(t),\quad w(0)=w_0\in E.
\end{align} 
For convenience, we drop the dependence on the initial datum $w_0$ in the notation $\Vcal(\xi)$.
Here, we implicitly assume that \eqref{spdes} is well-posed and a unique strong solution (in the stochastic sense) $w\in\Mcal_{\MA}^m(\X)$ exists for $\xi\in\Mcal_{\MA}^m(\X)$.
Here, we shall also assume that $\DeltaA:D(\DeltaA)\subset E\to E$ is a possibly nonlinear and measurable (single-valued) operator and $F:\X\times [0,T]\to E$ a (strongly) measurable map such that
\[\PP\left(\int_0^T\left(\left\vert\DeltaA w (s)\right\vert_E+\left\vert F(\xi,s)\right\vert_E\right)\,ds<\infty\right)=1,\]
and assume that $\Sigma:E \to \gamma(H,E)$ is strongly measurable. Here, $\gamma(H,E)$ denotes the space of $\gamma$-radonifying operators from $H$ to $E$,
as defined in the beginning of Section \ref{technics}, which coincides with the space of Hilbert-Schmidt operators $L_{\textup{HS}}(H,E)$ if $E$ is a
separable Hilbert space.

We are ready to formulate our main tool for proving the existence of nonnegative martingale solutions to
\eqref{equ1n}, that is, a stochastic variant of the (deterministic) Schauder-Tychonoff fixed point theorem(s) from \cite[§ 6--7]{granas}.

\begin{theorem}\label{ther_main}
Let $H$ be a Hilbert space, $Q:H\to H$ such that $Q$ is linear, symmetric, nonnegative definite and of trace class, let $U$ be a Banach space, and let us assume that we have a compact and dense embedding $\X^{\prime}\hookrightarrow\X$
as above. Let $m>1$. Suppose that
for any filtered probability space $\Afrak=(\Omega,\Fcal,\mathbb{F},\P)$
and for any $Q$-Wiener process $W$ with values in $H$ that is modeled
on $\Afrak$ the following holds.

Suppose that there exist constants $R_1,\ldots,R_K>0$, $K\in\N$, continuous functions $\Psi_i:\X\to [0,\infty)$, $1\le i\le K$, measurable functions $\Theta_i:\X\to[0,\infty]$, $1\le i\le K$ with closed sublevel sets $\Theta_i^{-1}([0,\alpha])$, $\alpha\ge 0$, $1\le i\le K$, and
a nonempty, sequentially weak$^\ast$-closed, measurable and bounded subset\footnote{Here, the notation $\Xcal(\Afrak)$ means that $\text{Law}(\xi)=\text{Law}(\tilde{\xi})$
on $\X$ for $\xi\in\Xcal(\Afrak)$ and $\tilde{\xi}\in\Mcal_{\tilde{{\Afrak}}}^{m}$
implies $\tilde{\xi}\in\Xcal(\tilde{\Afrak})$.} $\Xcal_{R_1,\ldots,R_K}(\Afrak)$ of $\Mcal_{\MA}^m(\X)$ such that:
\begin{enumerate}[(a)]
\item $\mathbb{E}[\Psi_i(\xi)]\le R_i$, for every $\xi\in \Xcal_{R_1,\ldots,R_K}(\Afrak)$ and every $1\le i\le K$,
\item $\mathbb{P}(\{\Theta_i(\xi)<\infty,\;\xi\in\Xcal_{R_1,\ldots,R_K}(\Afrak)\})=1$, for every $1\le i\le K$.
\end{enumerate}
Let us assume that the operator $\Vcal_{\Afrak,W}$, defined by \eqref{spdes}, restricted to $\Xcal_{R_1,\ldots,R_K}(\Afrak)$
satisfies the following
properties:
\begin{enumerate}[(i)]
\item the operator $\Vcal_{\Afrak,W}$ is well-defined on $\Xcal_{R_1,\ldots,R_K}(\Afrak)$ for all choices of $R_i>0$, $1\le i\le K$,
\item there exist constants $R^0_i>0$, $1\le i\le K$ such that 
\[\Vcal_{\Afrak,W}(\Xcal_{R_1,\ldots,R_K}(\Afrak))\subset \Xcal_{R_1,\ldots,R_K}(\Afrak)\]
for all $R_i\ge R^0_i$ and all $1\le i\le K$,
\item for all choices of $R_i>0$, $1\le i\le K$, the restriction $\Vcal_{\Afrak,W}\big\vert_{\Xcal_{R_1,\ldots,R_K}(\Afrak)}$ is
uniformly continuous on bounded subsets w.r.t. the strong topology of $\Mcal_{\Afrak}^{m}(\X)$,
\item there exist constants $R>0$, $m_0\ge m$ such that\footnote{Note that we can relax assumption (iv) to $m_0>1$ if we assume additionally that there exist constants $R>0$, $m_1 > m$ such that $\mathbb{E}\left[\vert\mathcal{V}_{\mathfrak{A},W}(\xi)\vert_{\mathbb{X}}^{m_1}\right]\le R$ for every $\xi\in\mathcal{X}_{R_1,\ldots,R_K}(\mathfrak{A})$ for all $R_i>0$ and all $1\le i\le K$.}
\[
\Eb\left[\vert\Vcal_{\Afrak,W}(\xi)\vert_{\X^{\prime}}^{m_0}\right]\le R\quad\text{for every}\quad \xi\in\Xcal_{R_1,\ldots,R_K}(\Afrak),
\]
for all $R_i>0$ and all $1\le i\le K$.
\item for all $R_i>0$ and all $1\le i\le K$, $\Vcal_{\Afrak,W}(\Xcal_{R_1,\ldots,R_K}(\Afrak))\subset\D([0,T];U)$ $\P$-a.s.\footnote{Here, $\mathbb{D}([0,T];U)$ denotes the
Skorokhod space of c\`adl\`ag paths in $U$ endowed with the Skorokhod $J_1$-topolgy, see \cite[Appendix A2]{Kallenberg}.}
\end{enumerate}
Then, there exists a filtered probability space $\Afrak^\ast=(\Omega^\ast,\Fcal^\ast,\mathbb{F}^\ast,\P^\ast)$
(that satisfies the usual
conditions)
together with a $Q$-Wiener process $W^\ast$ modeled on $\Afrak^\ast$
and an element $w^\ast\in\Mcal_{\Afrak^\ast}^{m}(\X)$ such that
for all $t\in[0,T]$, $\P^\ast$-a.s.
\[
\Vcal_{\Afrak^\ast,W^\ast}(w^\ast,w^\ast_0)(t)=w^\ast(t)
\]
for any initial datum $w_0\in L^m(\Omega,\Fcal_0,\P;E)$,
where $w^\ast_0\in L^m(\Omega^\ast,\Fcal_0^\ast,\P^\ast;E)$ satisfies $\operatorname{Law}(w^\ast_0)=\operatorname{Law}(w_0)$.
\end{theorem}

The proof of Theorem \ref{ther_main} is postponed to Section \ref{sec:proof_of_schauder}. We note that
by construction, we get that $w^\ast$ solves
 \begin{align*} 
dw^\ast(t) =&\left(\DeltaA w^\ast(t)+ F(w^\ast,t)\right)\, dt +\Sigma(w^\ast(t))\,dW^\ast(t),\quad w^\ast(0)=w_0.
\end{align*} 
on $\Afrak^\ast$.

\section{Existence of a solution to the stochastic Klausmeier system}\label{klausmeier}

In this section, we shall prove the existence of a nonnegative solution to the stochastic Klausmeier system. First, we will introduce some notation and the definition of a (martingale) solution.
After fixing the function spaces and the main hypotheses on the parameters of those, we present our main result Theorem \ref{mainresult}. The following proof consists mainly of a verification of the conditions for Theorem \ref{ther_main} and a stopping time localization procedure. As pointed out before, we are using compactness arguments to show the existence, which leads to the loss of the initial stochastic basis.

Let $H_1$ and $H_2$ be a pair of separable Hilbert spaces,
let $\mathfrak{A}=(\Omega,\CF,\mathbb{F},\PP)$ be a filtered probability space and let $W_1$, $W_2$ be a pair of independent Wiener process modeled on $\mathfrak{A}$ taking values in $H_1$ and $H_2$, respectively, with covariance operators $Q_1$ and $Q_2$, respectively.
We are interested in the solution to the following reduced Klausmeier system for $x\in \CO$ and $t>0$,
\begin{equation}\label{equ1ss}
 d {u}(t,x)  =  ( r_u\Delta   u^{[\gamma]}(t,x) -\chi  u(t,x)\,v^2(t,x))\, dt +\sigma_1 u(t,x)\circ  dW_1(t,x),
\end{equation}
and,
\begin{equation}\label{eqv1ss}
d{v}(t,x) = ( r_v\Delta v(t,x)  + u(t,x)\,v^2(t,x))\, dt  +\sigma_2 v(t,x)\circ d W_2(t,x),
\end{equation}
with Neumann (or periodic if $\Ocal=[0,1]^d$) boundary conditions and initial conditions $u(0)=u_0$
and $v(0)=v_0$. Let $r_u,r_v,\chi>0$ be positive constants. Here, we use the abbreviation $x^{[\gamma]}:=\vert x\vert^{\gamma-1}x$ for $\gamma>1$.
The hypotheses on the linear
noise coefficient maps $\sigma_1,\sigma_2$ are specified below.

Due to the nonlinear porous media term, we do neither use solutions in the strong stochastic sense, nor mild solutions, that is, solutions in the sense of stochastic convolutions. Let us define what we mean with a solution on a fixed stochastic basis. The function spaces $H_2^{-1}(\CO)$ and $H^{\rho}_2(\CO)$ used in the following definition are discussed in Appendix \ref{dbouley-space}.

\begin{definition}\label{def:singleSLN}
A couple $(u,v)$ of stochastic processes on $\Afrak$ is called \emph{solution to the system} \eqref{equ1ss}--\eqref{eqv1ss} for initial data $(u_0,v_0)$ if there exists $\rho\in\mathbb{R}$ such that
\[u\in L^2(\Omega;C([0,T];H_2^{-1}(\CO)))\cap L^{\gamma+1}(\Omega\times (0,T)\times \Ocal)\]
and
\[v\in L^2(\Omega;C([0,T];H^{\rho}_2(\CO)))\cap L^2(\Omega\times (0,T);H_2^{\rho+1}(\CO)) \]
such that $u$ and $v$ are $\mathbb{F}=(\Fcal_t)_{t\in [0,T]}$-adapted, and satisfy
\begin{equation}\label{eq:uStratonovich}u(t)=u_0+\int_0^t (r_u\Delta  {u}^{[\gamma]}(s)-\chi u(s) v^2(s))\,ds+\int_0^t \sigma_1 u(s)\circ dW_1(s)\end{equation}
for every $t\in [0,T]$, $\P$-a.s. in $H^{-1}_2(\Ocal)$,
and
\begin{equation}\label{eq:vStratonovich}v(t)=v_0+\int_0^t (r_v\Delta v(s)+u(s) v^2(s))\,ds+\int_0^t \sigma_2 v(s)\circ dW_2(s)\end{equation}
for every $t\in [0,T]$, $\P$-a.s. in $H_2^{\rho}(\Ocal)$.
\end{definition}

Here, $V:=L^{\gamma+1}(\Ocal)$ is dualized in a Gelfand triple over $\Hcal:=H^{-1}_2(\Ocal)$ which, in turn, is identified with its own dual by the Riesz isometry. Thus $V^\ast$ is not to be mistaken to be equal to be the usual Banach space dual of $V$, namely $L^{\frac{\gamma+1}{\gamma}}(\Ocal)$, see the proof of Theorem \ref{theou1} for details.
Note that the above $ds$-integral for $u$ in \eqref{eq:uStratonovich} is initially a $V^\ast$-valued Bochner integral, however seen to be in fact $H^{-1}_2(\Ocal)$-valued, see the discussion in \cite[Section 4.2]{weiroeckner} for further details. 

As mentioned before, due to the loss of the original probability space, we are considering solutions in the weak probabilistic sense.

\begin{definition}\label{Def:mart-sol}
A {\sl  martingale solution} to the problem
\eqref{equ1ss}--\eqref{eqv1ss} for initial data $(u_0,v_0)$ is a system
\begin{equation}
\left(\Omega ,{{\mathcal{F}}},{\mathbb{F}},\mathbb{P},
(W_1,W_2), (u,v)\right)
\label{mart-system}
\end{equation}
such that
\begin{enumerate}
\item the quadruple $\mathfrak{A}:=(\Omega ,{{\mathcal{F}}},{\mathbb{F}},\mathbb{P})$ is a complete filtered
probability space with a filtration ${\mathbb{F}}=(\mathcal{F}_t)_{t\in [0,T]}$ satisfying the usual conditions,
\item $W_1$ and $W_2$ are independent $H_1$-valued, respectively, $H_2$-valued Wiener processes over the probability space
$\mathfrak{A}$ with covariance operators $Q_1$ and $Q_2$, respectively,
\item both $u:[0,T]\times \Omega \to H^{-1}_2(\CO)$ and $v:[0,T]\times \Omega \to H_2^{\rho}(\CO)$ are ${\mathbb{F}}$-adapted
processes such that the couple $(u,v)$ is a solution to the system \eqref{equ1ss} and \eqref{eqv1ss} over the probability space $\mathfrak{A}$ in the sense of Definition \ref{def:singleSLN} for some $\rho\in\mathbb{R}$.
 \end{enumerate}
\end{definition}

\begin{remark}
For our purposes, instead of the Stratonovich formulation the system \eqref{equ1ss}--\eqref{eqv1ss}, it is convenient to consider the equations in It{\^o} form:
\begin{equation}\label{eq:uIto}
 d {u}(t,x)  =  ( r_u\Delta   u^{[\gamma]}(t,x) -\chi  u(t,x)\,v^2(t,x))\, dt +\sigma_1 u(t,x)\,  dW_1(t,x),
\end{equation}
and,
\begin{equation}\label{eq:vIto}
d{v}(t,x) = ( r_v\Delta v(t,x)  + u(t,x)\,v^2(t,x))\, dt  +\sigma_2 v(t,x)\, d W_2(t,x),
\end{equation}
with initial data $u(0)=u_0$ and $v(0)=v_0$.
One reason for this is that the stochastic integral then becomes a local martingale.  In order to show the existence of a solution to the Stratonovich system, one would have to incorporate the It\^o-Stratonovich conversion term (cf. \cite{evans2012introduction}), which, due to the linear multiplicative noise, is a linear term being just a scalar multiple of $u$, $v$, respectively.
If one is interested in the exact form of the correction term, we refer to \cite{duan}. We also refer
to the discussion in \cite{grayscott}, where the constant accounting for the correction term is computed explicitly.
\end{remark}

From now on, we shall consider the system \eqref{equ1ss}--\eqref{eqv1ss} in It{\^o} form, that is, as in \eqref{eq:uIto}--\eqref{eq:vIto}. Definitions \ref{def:singleSLN} and \ref{Def:mart-sol} are supposed to be adjusted in the obvious way, keeping the statement on the regularity of the solution unchanged.
We note that we can solve \eqref{equ1ss}--\eqref{eqv1ss} by a straightforward modification of the proof given here.

Before presenting our main result, we will first introduce the hypotheses on $d$, $\gamma$, $\rho$ and the initial conditions $u_0$ and $v_0$, and on
the multiplication operators  $\sigma_1$, $\sigma_2$. Most of the hypotheses are technical in nature, as they lead to several different embeddings for function spaces and interpolation spaces that we need to use in our proofs.

\begin{hypo}[Existence]
\label{init}
Let $d\in\{1,2,3\}$.
Let $\gamma>1$, $\rho\in\mathbb{R}$,  and $m>2$, $m_0>\frac {2(\gamma+1)}\gamma$ such that
$$\frac d2 -\frac 2m-\frac d {m_0} \le\rho<1-\frac d2
$$
and $p^\ast\ge 2$ and $p_0^\ast\ge 2$ such that
$$
\frac 1{p^\ast}+\frac 2m< 1\quad\mbox{and}\quad \frac {2}{ m_0}+\frac 1{p_0^\ast}< \frac {\gamma}{\gamma+1}
.
$$
{Let
$$l> 1+\left(1-\frac{d}{2}-\rho\right)^{-1}
$$}

Let us assume that the initial conditions $u_0$, $v_0$ are $\Fcal_0$ measurable and satisfy
$$\EE \vert u_0\vert_{L^{2}}^l \vee\EE \vert u_0\vert_{L^{p^\ast}}^{p^\ast_0}\vee\EE \vert u_0\vert_{L^{\gamma+1}}^{\gamma+1}<\infty,\quad
 \mbox{and}\quad
\EE \vert v_0\vert_{H^{\rho}_2}^{m_0}\vee\EE \vert v_0\vert_{H^{-\delta_0}_m}^{m_0}<\infty, \quad \delta_0<\frac 1{m_0},
  $$
and that $u_0$ and $v_0$ are a.e. nonnegative functions (nonnegative Borel measures that are finite on compact subsets, respectively).
\end{hypo}

\begin{hypo}[Noise]\label{wiener}
Let  $\{\psi_k:k\in\mathbb{Z}\}$ be the eigenfunctions of $-\Delta$ and $\{\nu_k:k\in\mathbb{Z}\}$ the corresponding eigenvalues.
Let $W_1$ and $W_2$ be a pair of Wiener processes given by
$$
{W}_j(t,x)=\sum_{k\in\mathbb{Z}} \lambda_k^{(j)} \psi_k(x)\beta^{(j)}_k(t), \quad t\in[0,T],\quad j=1,2,\quad x\in\Ocal,
$$
where $\{\lambda_k^{(j)}\;:\;k\in\mathbb{Z}\}$, $j=1,2$, is a pair of nonnegative sequences belonging to $\ell^2(\mathbb{Z})$,
$\{ \beta_k^{(j)}:[0,T]\times\Omega\to\mathbb{R}\;:\;(k,j)\in\mathbb{Z}\times\{1,2\}\}$
is a family of
independent standard real-valued Brownian motions.
We assume that $\lambda^{(j)}_k\le C_j \vert\nu_k\vert^{-\delta_j}$, $j=1,2$, where $\delta_1> \frac{1}{2}\vee \left(\frac{d}{2}-\frac{1}{4}\right)$, $\delta_2>\frac{1}{2}\vee
\left( \frac{d}{2}-\frac{1}{4}\right)$, and $C_1>0$, $C_2>0$.

Compare also with Subsection \ref{subsec:noise} for details.
\end{hypo}

Under these hypotheses, the existence of a martingale solution can be shown.

\begin{theorem}\label{mainresult}
Assume that Hypotheses \ref{init}--\ref{wiener} hold.
Then there exists a martingale solution to system \eqref{eq:uIto}--\eqref{eq:vIto} satisfying the following properties
\begin{enumerate}[(i)]
\item $u(t,x)\ge 0$ and $v(t,x)\ge 0$ $\PP$-a.s., for a.e. $t\in [0,T]$ and a.e. $x\in\Ocal$,
  \item Let  $p\ge 1$ and $\EE\vert u_0\vert^{p+1}_{L^{p+1}}<\infty$. Then there exists a constant $C_0(p,T)>0$ such that
\begin{align*}  \nonumber
\lqq{\EE \left[\sup_{0\le s\le T} \vert u (s)\vert_{L^{p+1} }^{p+1}\right]
+ \gamma p(p+1)r_u\EE \int_0^ \TInt\int_\CO  \vert u(s,x)\vert^{p+\gamma-2} \vert\nabla u (s,x)\vert^2\, dx \, ds} &
\\\nonumber &{} +(p+1)\chi\EE \int_0^\TInt \int_\CO  \vert u(s,x) \vert^{p+1} \vert v(s,x)\vert^2\, dx \, ds \le C_0(p,T)\,\left( \EE\vert u_0\vert_{L^{p+1}}^{p+1}+1\right).
\end{align*} 
  \item for any choice of parameters $\rho$, $m_0$, $l$ as in Hypothesis \ref{init}, there exists a constant $C_2(T)>0$ such  that
  \begin{align*}&\EE \left[\sup_{0\le s\le T} \vert v(s)\vert_{H^{\rho}_{2}}^{m_0}\right]+\EE\left(\int_0^T \vert v(s)\vert_{H^{\rho+1}_2}^2\,ds\right)^{m_0/2}\\
 \le & C_2(T)\left(1+  \EE \vert v_0\vert_{H^{\rho}_2}^{{m_0}}+\EE \vert u_0\vert_{L^{2}}^{l}\right).
 \end{align*}
\end{enumerate}
\end{theorem}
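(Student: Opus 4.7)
My approach is to apply the stochastic Schauder-Tychonoff fixed point theorem, Theorem \ref{ther_main}, to a decoupled and truncated version of the system, and then to remove the truncation by a stopping time gluing argument. The fixed point is built in the nonlinearity and not in the noise (which is already linear multiplicative): to a prescribed trajectory pair $\xi=(\xi_1,\xi_2)$ I would associate $(u,v)$ solving
\[du=\left(r_u\Delta u^{[\gamma]}-\chi\phi_\kappa(h(\xi,\cdot))\,u\,\xi_2^{2}\right)dt+\sigma_1 u\,dW_1,\]
\[dv=\left(r_v\Delta v+\phi_\kappa(h(\xi,\cdot))\,\xi_1\,v^{2}\right)dt+\sigma_2 v\,dW_2,\]
where $\phi_\kappa$ is a smooth cutoff equal to $1$ on $[-\kappa,\kappa]$ and $h$ is a functional controlling the relevant norms of $\xi$. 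The first equation is a porous-medium SPDE with bounded source and linear multiplicative noise, to be solved by the variational method on the Gelfand triple with test space $L^{\gamma+1}(\CO)$ and pivot $H^{-1}_2(\CO)$, using the monotonicity of $z\mapsto z^{[\gamma]}$; the second is a semilinear reaction-diffusion SPDE whose drift, once truncated, is globally Lipschitz into $H^\rho_2(\CO)$ and so admits a mild/semigroup solution. The existence, uniqueness and continuous dependence of both subproblems are the content of the technical propositions of Section \ref{technics}.

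For the fixed-point space I would take $\mathbb{X}:=L^{p^\ast}(0,T;L^{p^\ast}(\CO))\times L^2(0,T;H^\rho_2(\CO))$ for trajectory pairs, and $\mathbb{X}'$ the analogous space with improved space-time integrability and regularity (so that an Aubin-Lions-Simon compactness $\mathbb{X}'\hookrightarrow\mathbb{X}$ holds). The set $\mathcal{X}(\mathfrak{A})$ is then taken to be the intersection of the nonnegative cone with sublevel sets of sequentially weakly lower semicontinuous functionals encoding the target a priori bounds (ii)--(iii); this is admissible by Remark \ref{rem:weakstar}. The core a priori estimates come from applying It\^o's formula to $\frac{1}{p+1}|u|^{p+1}_{L^{p+1}}$ and to $|v|^{m_0}_{H^\rho_2}$ together with the BDG inequality: the quadratic variation produced by the linear noise yields only lower-order terms that are absorbed by the dissipative contributions of $\Delta u^{[\gamma]}$ and $\Delta v$, the truncated cross term is harmless thanks to its sign and the cutoff, and nonnegativity is preserved by testing against $u^-$ and $v^-$ (here the linearity of the noise in $u$ and $v$ is crucial). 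These estimates yield the invariance (i) and the $\mathcal{M}^{m_0}(\mathbb{X}')$-bound (iii) of Theorem \ref{ther_main}; the c\`adl\`ag property (iv) in $U:=H^{-1}_2(\CO)\times H^\rho_2(\CO)$ is automatic for variational and mild solutions in these regularity classes, and continuity (ii) in $\mathcal{M}^m_{\mathfrak{A}}(\mathbb{X})$ follows by writing the equations for the difference of two iterates, invoking monotonicity of $z\mapsto z^{[\gamma]}$ and the local Lipschitz property of the truncated nonlinearity, and applying Gronwall's inequality.

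Theorem \ref{ther_main} then delivers, on a new stochastic basis $\tilde{\mathfrak{A}}$ with a Wiener pair $(\tilde W_1,\tilde W_2)$, a fixed point $(\tilde u_\kappa,\tilde v_\kappa)\in\mathcal{X}(\tilde{\mathfrak{A}})$ solving the $\kappa$-truncated system. Setting $\tau_\kappa:=\inf\{t\in[0,T]:h(\tilde u_\kappa,\tilde v_\kappa,t)\ge\kappa\}\wedge T$, this pair solves the original system \eqref{eq:uIto}--\eqref{eq:vIto} on $[0,\tau_\kappa]$, and the uniform-in-$\kappa$ energy estimates force $\tau_\kappa\to T$ in $\tilde{\mathbb{P}}$-probability as $\kappa\to\infty$. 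A Jakubowski-Skorokhod type extraction combined with a diagonal argument in $\kappa$ (on yet another probabilistic basis, if necessary) produces a global martingale solution $(u,v)$ on $[0,T]$; Fatou's lemma and weak lower semicontinuity transfer the uniform bounds into the claimed estimates (ii)--(iii), and pointwise nonnegativity (i) passes to the limit directly.

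The main obstacle is to calibrate $\mathbb{X}$, $\mathbb{X}'$ and $h$ so that three competing requirements hold simultaneously: (a) the truncated cross term $\phi_\kappa(h(\xi))\xi_1 v^{2}$ is an admissible forcing that lets the $v$-equation take values in $H^\rho_2(\CO)$; (b) $\phi_\kappa(h(\xi))\,u\,\xi_2^{2}$ is an admissible forcing in the dual space underlying the variational theory for the porous medium equation; and (c) the compact embedding $\mathbb{X}'\hookrightarrow\mathbb{X}$ is genuine while still leaving enough energy-space headroom to dominate the nonlinear absorption term $\chi u v^2$. The intricate numerical constraints of Hypothesis \ref{init} linking $\rho$, $m$, $m_0$, $p^\ast$, $p_0^\ast$, $l$ and the dimension $d\in\{1,2,3\}$ are precisely what is needed to make all three compatible; verifying this compatibility is where the bulk of the technical work of Section \ref{technics} must go.
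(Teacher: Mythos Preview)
Your overall strategy---truncate, apply Theorem~\ref{ther_main}, then remove the cutoff via stopping times---matches the paper. But your decoupling of the nonlinearity differs from the paper's in a way that breaks the argument.

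In the paper the operator $\mathcal{V}_\kappa$ sends an input pair $(\eta,\xi)$ to $(u_\kappa,v_\kappa)$ solving
\[
du_\kappa=\bigl(r_u\Delta u_\kappa^{[\gamma]}-\chi\,\phi_\kappa(h(\eta,\xi,\cdot))\,\eta\,\xi^2\bigr)dt+\sigma_1 u_\kappa\,dW_1,
\]
\[
dv_\kappa=\bigl(r_v\Delta v_\kappa+\phi_\kappa(h(\eta,\xi,\cdot))\,\eta\,\xi^2\bigr)dt+\sigma_2 v_\kappa\,dW_2,
\]
so the \emph{entire} cross term $\eta\xi^2$ is frozen in both equations. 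The $v_\kappa$-equation is then a \emph{linear} stochastic heat equation with a prescribed, cutoff-controlled source, and existence, uniqueness, $H^\rho_2$-regularity and continuous dependence follow by standard semigroup arguments (Propositions~\ref{propvarational}, \ref{semigroup}, \ref{semigroup_continuous}).

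In your scheme the $v$-equation carries the term $\phi_\kappa(h(\xi,\cdot))\,\xi_1\,v^2$, which is \emph{quadratic} in the unknown $v$. The cutoff only controls the input $\xi$, not $v$ itself, so the drift is not globally Lipschitz in $H^\rho_2(\CO)$ as you claim. Worse, the sign is unfavourable: even for bounded nonnegative $\xi_1$, the equation $\partial_t v=\Delta v+c\,v^2$ with $c\ge 0$ can blow up in finite time for positive data, so neither global existence nor the uniform $\|v_\kappa\|_{\mathbb{H}_\rho}$-bound needed for invariance of $\mathcal{X}(\mathfrak{A})$ is available. The ``sign'' argument you invoke helps only for the $u$-equation (where $-\chi u\,\xi_2^2$ is absorbing), not for $v$. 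This gap propagates: without a priori control of $v_\kappa$ you cannot verify hypotheses (i) and (iii) of Theorem~\ref{ther_main}, and the continuity step (ii) for differences would require bounds on $v_1+v_2$ that you have not established.

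The fix is precisely the paper's: freeze the full product $\eta\xi^2$ in both subsystems, so that both become well-posed with known forcing (porous medium with source for $u$, linear heat with source for $v$). A secondary difference is that the paper takes $\mathbb{X}=\mathbb{Y}\times\mathbb{Z}$ with $\mathbb{Y}=L^{\gamma+1}(0,T;L^{\gamma+1})\cap L^\infty(0,T;H^{-1}_2)$ and $\mathbb{Z}=L^{m_0}(0,T;L^m)$, rather than your $L^{p^\ast}\times L^2(H^\rho_2)$; this choice is driven by exactly the compatibility constraints (a)--(c) you identify at the end, and is what makes the cutoff functional $h$ in \eqref{h12.def} do its job.
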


We shall also collect a standard notion on uniqueness.

\begin{definition} The system \eqref{eq:uIto}--\eqref{eq:vIto} is called \emph{pathwise unique} if, whenever $(u_i,v_i)$, $i=1,2$ are martingale solutions to the system \eqref{eq:uIto}--\eqref{eq:vIto} on a common stochastic basis
$(\Omega, \CF,\BF,\PP,(W_1,W_2))$, such that
$\PP(u_1(0)=u_2(0))=1$ and $\PP(v_1(0)=v_2(0))=1$, then
\[
\PP(u_1(t)=u_2(t))=1 \quad \mbox{and} \quad
\PP(v_1(t)=v_2(t))=1,
 \quad \mbox{for every } t \in [0,T].
\]
\end{definition}

The theorem of Yamada and Watanabe \cite{YW1} asserts that
(weak) existence and pathwise uniqueness of the solution to a stochastic equation is equivalent to the existence of a unique strong solution.
Therefore, showing pathwise uniqueness is a fundamental step for obtaining the existence of a unique strong solution.
Under certain additional conditions, as collected below, we are able to prove pathwise uniqueness.
{
\begin{hypo}[Uniqueness]\label{Uniqueness}
Let $\delta_0\in(0,\frac 1 \gamma)$, and $
\rho\ge \frac d2-\frac 12$.
\end{hypo}
\begin{remark}\label{rem:dimone}
The condition $\delta_0>0$ implies that the Hypothesis \ref{Uniqueness} can only be satisfied if  $d=1$.
\end{remark}

\begin{theorem}\label{thm_path_uniq}
Let $T>0$ and let $\mathfrak{A}=(\Omega,\CF,(\CF_t)_{t\in[0,T]},\PP)$ a probability space satisfying the usual conditions, let $(W_1,W_2)$ be a Wiener process  over $\MA$
 on $H_1$ and $H_2$ and satisfying Hypothesis \ref{wiener}.
 Let initial data  $u_0 \in H^{-1}_2(\CO)$ and $v_0 \in L^{2}(\CO)$
 together with the parameters $m,m_0,p^\ast,p^\ast_0,l,\delta_0$ and $\rho$  satisfying Hypothesis \ref{init} and Hypothesis \ref{Uniqueness}, where $\Ocal\subset\mathbb{R}$. Assume further that Hypothesis \ref{wiener} holds.
  Let $(u_i,v_i)$, $i=1,2$, be two solutions to the system \eqref{eq:uIto}--\eqref{eq:vIto} with initial data $(u_0,v_0)$ and belonging $\PP$-a.s. to $C([0,T];H^{-1}_2(\CO))\times C([0,T];H^{-\delta_0}_2(\CO))$. Then, the processes $(u_1,v_1)$ and $(u_2,v_2)$ are indistinguishable in $H^{-1}_2(\CO)\times H^{-\delta_0}_2(\CO)$.
\end{theorem}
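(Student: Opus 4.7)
The plan is to show pathwise uniqueness by a standard energy estimate for the differences $U := u_1 - u_2$ and $V := v_1 - v_2$, measured in the spaces $H^{-1}_2(\CO)$ and $H^{-\delta_0}_2(\CO)$ respectively, combined with a stopping-time localization. Concretely, I would first introduce
\[
\tau_R := \inf\bigl\{t\in[0,T]\colon |u_1(t)|_{L^{p^\ast}}+|u_2(t)|_{L^{p^\ast}}+|v_1(t)|_{H^\rho_2}+|v_2(t)|_{H^\rho_2}>R\bigr\}\wedge T,
\]
and observe, using the path regularity of the solution class together with the a~priori estimates of Theorem~\ref{mainresult}, that $\tau_R\nearrow T$ almost surely as $R\to\infty$. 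The subtracted equations read
\begin{align*}
dU &= r_u\Delta\bigl(u_1^{[\gamma]}-u_2^{[\gamma]}\bigr)\,dt - \chi\bigl(u_1 v_1^2-u_2 v_2^2\bigr)\,dt + \sigma_1 U\,dW_1,\\
dV &= r_v\Delta V\,dt + \bigl(u_1 v_1^2-u_2 v_2^2\bigr)\,dt + \sigma_2 V\,dW_2,
\end{align*}
and the choice of $H^{-1}_2$ as the ambient space for $U$ is forced by the porous-medium nonlinearity: identifying $H^{-1}_2$ with $L^2$ via $(-\Delta)^{-1/2}$ gives the monotonicity inequality
\[
2r_u\bigl\langle(-\Delta)^{-1}\Delta(u_1^{[\gamma]}-u_2^{[\gamma]}),U\bigr\rangle_{L^2}=-2r_u\bigl\langle u_1^{[\gamma]}-u_2^{[\gamma]},u_1-u_2\bigr\rangle_{L^2}\le 0.
\]
The Laplacian in the $V$-equation provides, in $H^{-\delta_0}_2$, the dissipative contribution $-2r_v|V|^2_{H^{1-\delta_0}_2}$, while the It\^o correction from the linear multiplicative noise contributes only $C\bigl(|U|^2_{H^{-1}_2}+|V|^2_{H^{-\delta_0}_2}\bigr)$, with $C$ depending on $\sigma_1,\sigma_2$ and the summability parameters of Hypothesis~\ref{wiener}.

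The heart of the argument is the treatment of the trilinear coupling $u_1v_1^2-u_2 v_2^2$, for which I would use the algebraic splitting
\[
u_1 v_1^2 - u_2 v_2^2 = U\,v_1^2 + u_2\,V\,(v_1+v_2).
\]
Testing respectively with $(-\Delta)^{-1}U$ in the $U$-equation and with $(-\Delta)^{-\delta_0}V$ in the $V$-equation produces four trilinear inner products that must be absorbed by a small fraction of the available dissipation $|u_1^{[\gamma]}-u_2^{[\gamma]}|_{L^2}^2$, $|V|^2_{H^{1-\delta_0}_2}$ plus a Gronwall-integrable multiple of $|U|^2_{H^{-1}_2}+|V|^2_{H^{-\delta_0}_2}$. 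The conditions $\delta_0<1/\gamma$ and $\rho\ge d/2-1/2$ of Hypothesis~\ref{Uniqueness} are exactly what is needed to carry out these estimates: the first ensures enough integrability of $v_1^2$ for it to act as a pointwise multiplier on negative-index Sobolev functions, and the second, through the embedding $H^\rho_2(\CO)\hookrightarrow L^q(\CO)$ for sufficiently large $q$, allows $u_2 V(v_1+v_2)$ to be placed in a space dual to $H^{-1}_2$ after $R$-dependent bounds on $|v_i|_{H^\rho_2}$ are invoked on $[0,\tau_R]$.

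The principal obstacle is this last trilinear bound: because $U$ only lives in the negative-order space $H^{-1}_2$, products such as $Uv_1^2$ cannot be handled by na\"ive H\"older estimates and instead require fractional Leibniz or paraproduct techniques, together with the interpolation inequality that relates $|u_1^{[\gamma]}-u_2^{[\gamma]}|_{L^2}$ to $|U|_{H^{-1}_2}$ and a higher norm of $u_1,u_2$ controlled via $\tau_R$. Once this deterministic estimate is in place, taking expectations on $[0,t\wedge\tau_R]$ kills the martingale parts of the noise (or they are absorbed via a Burkholder-Davis-Gundy estimate of the type collected in Appendix~\ref{sec:BDG}), and Gronwall's lemma yields
\[
\EE\bigl[|U(t\wedge\tau_R)|^2_{H^{-1}_2}+|V(t\wedge\tau_R)|^2_{H^{-\delta_0}_2}\bigr]=0,\qquad t\in[0,T].
\]
Letting $R\to\infty$ then shows that $U$ and $V$ vanish $\P$-almost surely for every $t$, proving indistinguishability of $(u_1,v_1)$ and $(u_2,v_2)$ in $H^{-1}_2(\CO)\times H^{-\delta_0}_2(\CO)$.
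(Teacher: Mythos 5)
Your high-level plan matches the paper's: deduct the equations, use porous-medium monotonicity in $H^{-1}_2$ and heat dissipation in $H^{-\delta_0}_2$, split the trilinear term as $u_1v_1^2-u_2v_2^2 = Uv_1^2 + u_2V(v_1+v_2)$, localize by a stopping time, and close with Gronwall. However, there are three gaps that are not merely details but carry most of the weight of the argument.

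First, your stopping time $\tau_R$ is built on the \emph{pointwise-in-time} norms $|u_i(t)|_{L^{p^\ast}}$ and $|v_i(t)|_{H^\rho_2}$, and you assert $\tau_R\nearrow T$ a.s. But the solution class in the theorem statement only gives $u_i\in C([0,T];H^{-1}_2)$ and $v_i\in C([0,T];H^{-\delta_0}_2)$, so these higher pointwise norms need not be finite at any fixed $t$, and the a~priori bounds from Theorem~\ref{mainresult} and Corollary~\ref{stroock} are precisely of \emph{time-integral} type: $\EE\|v\|^{m_0}_{\mathbb{H}_\rho}<\infty$ and $\EE\|u\|^{2\gamma}_{L^{2\gamma}(0,T;H^{\hat\delta}_2)}<\infty$. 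The paper's $\tau_N$ is therefore defined through $\|\mathbbm{1}_{[0,t]}v_i\|_{\mathbb{H}_\rho}$ and $\|\mathbbm{1}_{[0,t]}u_i\|_{L^{2\gamma}(0,T;H^{\hat\delta}_2)}$, for which Remark~\ref{muss_noch} and Chebyshev give $\PP(\tau_N<T)\to 0$; your pointwise stopping time does not have this property in the stated solution class. (The paper also extends the stopped processes to all of $[0,T]$ by auxiliary porous-medium/heat equations before running the energy estimate — a technicality your sketch omits, but one could probably argue around it.)

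Second, the dissipation you propose to use, $|u_1^{[\gamma]}-u_2^{[\gamma]}|_{L^2}^2$, is not what the monotonicity term produces. From your own identity and Lemma~\ref{lem:porous-medium-inequality} one gets $-2r_u\langle u_1^{[\gamma]}-u_2^{[\gamma]},u_1-u_2\rangle_{L^2}\le -2^{2-\gamma}r_u\,|u_1-u_2|^{\gamma+1}_{L^{\gamma+1}}$; in the paper this $L^{\gamma+1}$-dissipation of the \emph{difference} (not of the difference of powers) is what is used, and in fact it is largely discarded — the trilinear terms are closed in $H^{-1}_2$ and $H^{-\delta_0}_2$ norms alone, not absorbed into porous-media dissipation. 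The ``interpolation inequality relating $|u_1^{[\gamma]}-u_2^{[\gamma]}|_{L^2}$ to $|U|_{H^{-1}_2}$ and a higher norm'' that you posit does not appear in the argument and is not obviously true.

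Third, and most substantively, you explicitly flag the ``principal obstacle'' — the trilinear bounds for $Uv_1^2$ and $u_2V(v_1+v_2)$ in negative Sobolev norms — but do not carry them out. This is the entire content of the proof: it is exactly here that Hypothesis~\ref{Uniqueness} ($\delta_0<1/\gamma$, $\rho\ge d/2-1/2$) is consumed, via the Runst--Sickel multiplication and paraproduct estimates (\cite[p.\ 190, p.\ 192, p.\ 200]{runst}), to get inequalities such as $|(u_1-u_2)v_i^2|_{H^{-1}_2}\le |u_1-u_2|_{H^{-1}_2}|v_i|^2_{H^\alpha_2}$ and $|u_2v_i(v_1-v_2)|_{H^{-1}_2}\le C|v_1-v_2|_{H^{-\delta_1}_2}|u_2|_{H^{\delta_2}_2}|v_i|_{H^{\delta_3}_2}$ with a compatible system of indices, and a parallel set of estimates for the $V$-equation in $H^{\rho_0}_2$. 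Without producing these estimates (and verifying that the parameter constraints are consistent with Hypotheses~\ref{init} and \ref{Uniqueness}), the proposal is a scaffolding around the actual proof rather than a proof.
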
}
\begin{remark}
It can be shown that for dimension $d=1$
such indices  $m^\ast,m_0,p^\ast,p^\ast_0,l,\delta_0$ and $\rho$  satisfying Hypothesis \ref{init} and Hypothesis \ref{Uniqueness}
can be found. Because of our condition $\rho\ge\frac d2-\frac 12$, we cannot handle dimensions $d\in \{2,3\}$ as Hypothesis \ref{init} then implies that $\rho<0$.
\end{remark}

\begin{proof}
The proof is postponed to Section \ref{pathwise}.
\end{proof}

By this Theorem at hand, the following corollary is a consequence of the Theorem of Yamada-Watanabe.
\begin{corollary}\label{cor:strong}
There exists a unique strong solution (in the stochastic sense) to the system \eqref{eq:uIto}--\eqref{eq:vIto}.
\end{corollary}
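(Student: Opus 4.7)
The plan is to combine the existence of a martingale solution established in Theorem \ref{mainresult} with the pathwise uniqueness result of Theorem \ref{thm_path_uniq}, and then to invoke a Yamada-Watanabe type theorem for stochastic partial differential equations. The classical principle of Yamada and Watanabe \cite{YW1,WY2}, suitably extended to the infinite-dimensional setting (e.g., by Ondrej\'at, R\"ockner-Schmuland-Zhang, or Kurtz), asserts that weak (martingale) existence together with pathwise uniqueness implies the existence of a unique strong solution that is a Borel measurable functional of the initial condition and the driving noise.

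First, I would recast the coupled system \eqref{eq:uIto}--\eqref{eq:vIto} as a single stochastic evolution equation for the product process $(u,v)$ taking values in the Polish space $\Zcal := H^{-1}_2(\Ocal) \times H^{-\delta_0}_2(\Ocal)$, driven by the product Wiener process $(W_1,W_2)$ on $H_1 \times H_2$ with covariance operator $Q_1 \oplus Q_2$. Theorem \ref{mainresult} yields a martingale solution $(\tilde{\Afrak},(\tilde W_1, \tilde W_2),(\tilde u, \tilde v))$ on some stochastic basis, whose trajectories lie in a separable subspace of $\Zcal$. Theorem \ref{thm_path_uniq}, in turn, guarantees that any two solutions defined on a common stochastic basis with the same initial datum $(u_0,v_0)$ satisfying Hypotheses \ref{init} and \ref{Uniqueness} are indistinguishable in $\Zcal$.

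Applying the infinite-dimensional Yamada-Watanabe theorem in this setting then produces a Borel measurable map $F$ from initial data and Wiener trajectories into the path space $C([0,T];\Zcal)$ such that, for every filtered probability space $\Afrak = (\Omega,\Fcal,\mathbb{F},\P)$ satisfying the usual conditions, every $Q_1 \oplus Q_2$-Wiener process $(W_1,W_2)$ modeled on $\Afrak$, and every $\Fcal_0$-measurable initial datum $(u_0,v_0)$ complying with Hypotheses \ref{init} and \ref{Uniqueness}, the process $(u,v) := F((u_0,v_0),(W_1,W_2))$ is a strong (probabilistic) solution in the sense of Definition \ref{def:singleSLN}. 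Uniqueness is then immediate from pathwise uniqueness applied to any two such strong solutions.

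The main technical obstacle I foresee is the verification of the measurability requirements of the Yamada-Watanabe framework in our mixed variational-semigroup setting, in particular: (i) ensuring that the solution path space is Polish (or Lusin), which follows from the separability of $H^{-1}_2(\Ocal)$ and $H^{-\delta_0}_2(\Ocal)$; (ii) checking that the candidate solution is adapted to the completion of the raw filtration generated by $(u_0,v_0)$ and $(W_1,W_2)$; and (iii) handling the nonlinear porous medium drift, which forces one to read the Bochner integral in a $V^\ast$-valued sense as in Theorem \ref{theou1}. Since the martingale solution from Theorem \ref{mainresult} is $\mathbb{F}$-progressively measurable with trajectories in separable metrizable spaces, and since pathwise uniqueness has already been formulated in the correct functional setting, these points are standard, and the cited Yamada-Watanabe theorem applies to conclude.
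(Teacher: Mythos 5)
Your proposal is correct and follows exactly the same route as the paper: combine the existence of a martingale solution (Theorem \ref{mainresult}) with pathwise uniqueness (Theorem \ref{thm_path_uniq}) and invoke an infinite-dimensional Yamada--Watanabe theorem (the paper cites \cite[Appendix E]{weiroeckner} and \cite{Qiao2010,Kurtz2007,Ondrejat2004}). The additional remarks you make on the Polishness of the state space and on measurability are reasonable elaborations of the same one-line argument the authors give.
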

\begin{proof}
This follows from Theorem \ref{mainresult} in combination with Theorem \ref{thm_path_uniq} and the Yamada-Watanabe theorem, see \cite[Appendix E]{weiroeckner} and \cite{Qiao2010,Kurtz2007,Ondrejat2004}.
\end{proof}

The proof of Theorem \ref{mainresult} is an application of the Schauder-Tychonoff-type Theorem \ref{ther_main} and consists of the following five steps to verify the conditions of Theorem \ref{ther_main}.

In the first step, we are specifying the underlying Banach spaces. In the second step, we shall construct the operator $\mathcal{V}$ for a truncated system and show that the operator $\mathcal{V}$ satisfies the assumptions of Theorem \ref{ther_main}. In the third step, we localize via stopping times and glue the fixed point solutions together, which exist by Theorem \ref{ther_main}. In the fourth step, we prove that the stopping times are uniformly bounded. In the fifth step, we show that we indeed yield a martingale solution satisfying the above properties. However, to keep the proof itself simple, we will postpone several technical a priori estimates and further regularity results which are collected in Section \ref{technics}.

\begin{proof}[Proof of Theorem \ref{mainresult}]
\begin{steps}
\item{\bf The underlying space(s).}
Here we define the spaces on which the operator $\mathcal{V}$ will act.
Let the probability space $\mathfrak{A}=(\Omega,\CF,\mathbb{F},\PP)$ be given and let $W_1$ and $W_2$ be two independent $H_1$ and $H_2$-valued Wiener processes defined over $\mathfrak{A}$ with covariances $Q_1$ and $Q_2$. Let $W=(W_1,W_2)$, $H=H_1\times H_2$, with covariance operator
$$ Q=\left(\begin{matrix} Q_1 & 0 \\0 & Q_2\end{matrix}\right).
$$

Let us define the Banach space
$$\mathbb{Y}= L^{\gamma+1}(0,T; {L^{\gamma+1}}(\CO))\cap L^\infty(0,T; {H^{-1}_2}(\CO))$$
equipped with the norm
$$
\Vert\eta\Vert_{\mathbb{Y}}:= \Vert\eta\Vert_{L^{\gamma+1}(0,T; {L^{\gamma+1}})}+ \Vert\eta\Vert_{L^\infty(0,T; {H^{-1}_2})},\quad\eta\in \mathbb{Y},
$$
and the reflexive Banach space
$$\mathbb{Z}:=L^{m_0}(0,T;L^m(\CO)),$$
equipped with the norm
$$
\Vert\xi\Vert_{\mathbb{Z}}:= \Vert\xi\Vert_{L^{m_0}(0,T;L^m)},\quad\xi\in \mathbb{Z}.
$$

Finally, let us fix
an auxiliary  Banach space $\BH := L^2(0,T; H^{\rho+1}_2(\CO))\cap L^\infty(0,T; {H^{\rho}_2}(\CO))$
equipped with the norm
\begin{equation}\label{eq:BZeq}
\Vert\xi\Vert_{\BH}:= \Vert\xi\Vert_{L^2(0,T; H^{\rho+1}_2)}+ \Vert\xi\Vert_{L^\infty(0,T; {H^{\rho}_2})},\quad\xi\in \BH.
\end{equation}

\begin{remark}\label{variationalremark}
If
$$\frac d2 -\rho\le \frac 2m+\frac d {m_0},$$
then one can show by Sobolev embedding and interpolation theorems (see Proposition \ref{interp_rho} in the appendix) that
there exists a constant $C>0$ such that
$$
\Vert\xi\Vert_{\mathbb{Z}}\le C\Vert\xi\Vert_{\BH },\quad \xi\in\BH .
$$
\end{remark}
Let us fix the compactly embedded reflexive Banach subspace of $\mathbb{Z}$ by\footnote{For the definition of $\mathbb{W}^{\alpha}_{m_0}$, we refer to Appendix \ref{dbouley-space}.}
$$\mathbb{Z}':=L^{m_0}(0,T;H^\sigma_m(\CO))\cap \mathbb{W}^{\alpha}_{m_0}(0,T;H^{-\delta}_m(\CO))$$
equipped with the norm
$$
\Vert\xi\Vert_{\mathbb{Z}'}:= \left(\Vert\xi\Vert_{L^{m_0}(0,T;H^\sigma_m)}^{m_0}+\Vert\xi\Vert_{\mathbb{W}^{\alpha}_{m_0}(0,T;H^{-\delta}_m)}^{m_0}\right)^{1/m_0},
$$
compare with Appendix \ref{dbouley-space}, where the compact embedding $\mathbb{Z'}\hookrightarrow\mathbb{Z}$ is discussed. The parameters $\sigma>0$, $\alpha\in (0,1)$, $\delta>0$ are specified in Proposition \ref{semigroup}.

Now,
denote the space of progressively measurable (pairs of) processes $\mathcal{M}_{\mathfrak{A}}^{2,m_0}(\mathbb{Y},\BX)$ by
\begin{align*} 
\mathcal{M}_{\mathfrak{A}}^{2,m_0}(\mathbb{Y},\BX):= &\Big\{ (\eta,\xi) \;\colon\; \eta,\xi:[0,T ]\times   \Omega \to \Dcal^\prime(\Ocal) \;\mbox{such that} \\
&\qquad\mbox{$\eta$ and $\xi$ are progressively measurable on $\mathfrak{A}$ and}\\
   &\qquad\EE \Vert\eta\Vert^2_{\mathbb{Y}} <\infty \; \mbox{and}\;\EE \Vert\xi\Vert^{m_0}_{\BX }<\infty\Big\}
\end{align*} 
equipped with the norm
\begin{equation}\label{eq:MMnorm}\Vert (\eta,\xi)\Vert_{\mathcal{M}_{\mathfrak{A}}^{2,m_0}(\mathbb{Y},\BX )}:=\left(  \EE \Vert\eta\Vert^2_\mathbb{Y}\right)^\frac 12+\left(  \EE \Vert\xi\Vert^{m_0}_\BX \right)^\frac 1{m_0},
\quad (\eta,\xi)\in\mathcal{M}^{2,m_0}_{\mathfrak{A}}(\mathbb{Y},\BX ).
\end{equation}
Note that here, progressive measurability is meant relative to the Borel $\sigma$-fields of the target spaces $H_2^{-1}(\Ocal)$ and $H^{\rho}_2(\Ocal)$ respectively.

Finally, for fixed $\Reins,\Rzwei,\Rdrei>0$, let us define the subspace $\mathcal{X}_{\mathfrak{A}}=\subX$
by
\bigskip\bigskip
\begin{align*} 
&\subX\\
:=&\Bigg\{ (\eta,\xi)\in\CM_\MA^{2,m_0}(\mathbb{Y},\BX )\;\colon\\
& \qquad
 \EE \left[\sup_{0\le s\le T} \vert\eta(s)\vert_{L^{p^\ast}}^{p_0^\ast}\right]^\frac 1{{p_0^\ast}}\le \Reins,\;  \EE \Vert\xi\Vert^{m_0}_{\BX}\le \Rzwei,\;\EE\Vert\xi\Vert_{\BH}^{m_0}\le \Rdrei, \;\mbox{and}\\
 &\qquad \quad\eta\;\mbox{and}\;\xi\;\mbox{are nonnegative}\;\P\otimes\mbox{Leb-a.e. in}\;\Dcal^\prime(\Ocal)\Bigg\}.
 \end{align*} 

It is easy to verify that $\mathcal{X}_{\mathfrak{A}}$ is a sequentially weak$^\ast$-closed and bounded subset of $\mathcal{M}^{2,m_0}_{\mathfrak{A}}(\mathbb{Y},\BX)$. The continuous functions $\Psi_i$, $i=0,1,2$, satisfying assumption (a) of Theorem \ref{ther_main} can be defined in the obvious way. Also, it is easy to find measurable functions $\Theta_i$, $i=0,1,2$, with closed sublevel sets such that assumption (b) of Theorem \ref{ther_main} is satisfied, and can be used to capture the nonnegativity by setting e.g. $\Theta_i((\eta,\xi)):=\infty$ if $\eta$ or $\xi$ is negative, $i=0,1,2$.

Every $(\eta,\xi)\in\mathcal{X}_{\mathfrak{A}}$ is a pair of a (equivalence class of a) nonnegative function (or a
nonnegative Borel measure that is finite on compact subsets of $\Ocal$).

\begin{remark}
Note that in order to apply Theorem \ref{ther_main} formally, we will assume the obvious modification (or extension) of its statement and proof, such that we can
treat pairs of spaces with different exponents like $\CM_\MA^{2,m_0}(\mathbb{Y},\BX )$.\end{remark}

\item  {The truncated system.}

Let $\phi \in \Dcal(\mathbb{R})$ be a smooth cutoff function that satisfies
$$
\phi(x) \bcase =0, &\mbox{ if } \vert x\vert\ge 2,
\\ \in [0,1], &\mbox{ if } 1<\vert x\vert<2,
\\=1, &\mbox{ if } \vert x\vert\le 1,
\ecase
$$
and let $\phi_\kk(x):= \phi(x/\kk)$, $x\in\mathbb{R},\,\kk \in \mathbb{N}$.
In addition, for any progressively measurable pair of processes $(\eta,\xi)\in\CM_\MA^{2,m_0}(\BY,\mathbb{Z})$
let us define for $t \in [0,T]$
\begin{align*} 
h(\eta,\xi,t ):=& \sup_{0\le s\le t}\vert\eta(s)\vert^2_{H^{-1}_2}+\int_0^ t\vert\eta(s)\vert_{L^{\gamma+1}}^{\gamma+1}\, ds+\Vert\xi\mathbbm{1}_{[0,t]}\Vert_{\BH}
, \end{align*} 
where $\nu\in(0,1]$ is chosen such that $\frac 1{p_0^\ast}+\frac {\nu m_0}{\gamma+1}\le 1$ and $\frac 1{p_0^\ast}+\frac {\nu }{m_0}\le 1 $.

Let us consider the truncated system given by
\begin{equation}\label{eq:cutoffu}
\left\{ \barray  d {\uk }(t) & =&  \left[r_u \Delta   (\uk (t))^{[\gamma]} -\chi \phi_\kappa(h (u_\kappa,v_\kappa,t))u_\kappa (t) v_\kappa^2(t) \right]\, dt
+\sigma_1\uk (t)  dW_1(t),
 \\
  \uk (0) &=& u_0  , \phantom{\Big\vert}\earray\right.
\end{equation}
and
\begin{equation}\label{eq:cutoffv}
\left\{ \barray
d{\vk }(t) &=& \left[ r_v\Delta \vk (t)  +  \phi_\kappa (h(u_\kappa,v_\kappa,t))u_\kappa (t) v_\kappa^2(t) \right]\,dt
 +\sigma_2 \vk (t)d W_2(t) .
\\
 {\vk }(0) &=& v_0 . \phantom{\Big\vert}\earray\right.
\end{equation}

We shall show the existence of a martingale solution to system \eqref{eq:cutoffu}--\eqref{eq:cutoffv}.
\begin{proposition}\label{prop.exist.n}
For any $\kk\in\NN$, there exists constants $\Reins>0$, $\Rzwei>0$, $\Rdrei>0$, depending on $\kappa$, such that
there exists a martingale solution $(\uk,\wk)$ to system \eqref{eq:cutoffu}--\eqref{eq:cutoffv} contained in $\subX\subset\CM_\MA^{2,m_0}(\mathbb{Y},\BX )$.
\end{proposition}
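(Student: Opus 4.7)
The plan is to apply the stochastic Schauder--Tychonoff Theorem \ref{ther_main} (in the pair-valued modification alluded to in the remark preceding the proposition) to a decoupling fixed-point operator on $\subX$. Given $(\eta,\xi)\in\subX$, define $\mathcal{V}_{\MA,(W_1,W_2)}(\eta,\xi):=(\buk,\bvk)$, where $(\buk,\bvk)$ solves the \emph{decoupled frozen} system
\begin{align*}
d\buk(t) &= \bigl[r_u\Delta\buk^{[\gamma]}(t) - \chi\phi_\kk(h(\eta,\xi,t))\,\eta(t)\,\xi^2(t)\bigr]\,dt + \sigma_1 \buk(t)\,dW_1(t),\\
d\bvk(t) &= \bigl[r_v\Delta\bvk(t) + \phi_\kk(h(\eta,\xi,t))\,\eta(t)\,\xi^2(t)\bigr]\,dt + \sigma_2\bvk(t)\,dW_2(t),
\end{align*}
with initial data $(u_0,v_0)$. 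The first equation is a porous-media SPDE with linear multiplicative noise and a Bochner-integrable source, handled via the variational monotonicity framework (Theorem \ref{theou1}); the second is a linear parabolic SPDE, solved by semigroup methods (Proposition \ref{semigroup}). The point of the cutoff $\phi_\kk(h(\eta,\xi,\cdot))$, together with the specific $\nu$ chosen in \eqref{h12.def}, is to render the source $\phi_\kk(h(\eta,\xi,t))\eta(t)\xi^2(t)$ bounded in a Bochner norm uniformly in $(\eta,\xi)\in\subX$ by H\"older's inequality.

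Next I would verify the four hypotheses of Theorem \ref{ther_main}. For invariance (i), I apply It{\^o}'s formula to $|\buk|^{p+1}$ for suitable $p$ together with the BDG inequality from Appendix \ref{sec:BDG} and Gronwall's lemma; the cutoff produces an a priori $L^q$-bound on the source that does not depend on $(\eta,\xi)$, yielding $\Reins$ from the $\sup_t L^{p^\ast}$ estimate. For $\bvk$, heat-semigroup $H_2^\rho$-energy estimates combined with the Sobolev interpolation of Remark \ref{variationalremark} supply $\Rzwei$ and $\Rdrei$, so that all three radii depend only on $\kk$ and the moments of $(u_0,v_0)$ prescribed by Hypothesis \ref{init}. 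Nonnegativity of $\buk$ and $\bvk$ is preserved because the initial data and the source are nonnegative, the sink term in the $\buk$-equation has the correct sign, and linear multiplicative noise preserves the nonnegative cone \cite{TessitoreZabczyk1998,BDPR3}.

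For strong continuity (ii), if $(\eta_n,\xi_n)\to(\eta,\xi)$ in $\Mcal_\MA^m(\BY,\BX)$, then the boundedness of $\phi_\kk$, the uniform $\subX$-bounds, and dominated convergence imply convergence of the frozen source in the relevant Bochner norm; standard energy/monotonicity estimates for the two decoupled subsystems then transport this to convergence of $(\buk,\bvk)$. Higher regularity (iii) in $\mathbb{Z}^\prime$ is delivered by Proposition \ref{semigroup} combined with a fractional-in-time Nikolskii estimate for the deterministic and stochastic convolutions, yielding the $\mathbb{W}^\alpha_{m_0}(0,T;H^{-\delta}_m)$-seminorm. Condition (iv) is immediate: $\buk\in C([0,T];H^{-1}_2(\Ocal))$ by the variational framework and $\bvk\in C([0,T];H^\rho_2(\Ocal))$ by the semigroup approach, so c\`adl\`ag paths in a common negative-order Sobolev space $U$ are automatic.

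The hard part will be the energy bookkeeping in step (i): the triple $(\Reins,\Rzwei,\Rdrei)$ must be chosen \emph{simultaneously} so that the $\buk$-estimates close at $\Reins$ (requiring an $L^{p^\ast}$ bound despite the porous-media nonlinearity), while the $\bvk$-estimates close at $(\Rzwei,\Rdrei)$ (where the finer $\BH$-norm \eqref{eq:BZeq} re-enters $\BX$ through Remark \ref{variationalremark}). The exponent $\nu$ in \eqref{h12.def} is chosen precisely so that H\"older's inequality factors $\phi_\kk(h)\eta\xi^2$ into pieces compatible with the exponents $p^\ast_0$ and $m_0$ defining $\subX$. Once this bookkeeping is in place, Theorem \ref{ther_main} (applied with $U$ a suitable negative Sobolev space) yields a new stochastic basis $\tilde{\MA}$, a new Wiener pair $(\tilde W_1,\tilde W_2)$, and a fixed point $(\uk,\vk)\in\subX$, which is the desired martingale solution of \eqref{eq:cutoffu}--\eqref{eq:cutoffv}.
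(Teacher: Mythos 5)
Your proposal matches the paper's proof almost step for step: the paper likewise defines $\mathcal{V}_\kappa$ by solving the decoupled frozen system \eqref{eq:cutoffuu}--\eqref{eq:cutoffvv}, establishes invariance via Proposition \ref{uniformlpbounds} (Itô plus BDG plus Gronwall for $u$) and Propositions \ref{propvarational}, \ref{semigroup} (semigroup estimates for $v$) together with the positivity results Propositions \ref{positivityu}--\ref{positivityv}, proves continuity via Propositions \ref{continuity}, \ref{semigroup_continuous}, obtains the $\mathbb{Z}'$-bound and tightness via Proposition \ref{semigroup}, Proposition \ref{CC2} and Aubin--Lions--Simon, and then invokes Theorem \ref{ther_main}. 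The only minor difference is that for condition (ii) you gesture at dominated convergence where the paper uses the more careful quantitative difference estimates of Technical Proposition \ref{techpropo}, but this does not change the structure of the argument.
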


\begin{proof}[Proof of Proposition \ref{prop.exist.n}]
The proof consists of several steps. First, we shall define an operator denoted by $\mathcal{V}_\kk$ which satisfies the assumptions of Theorem
\ref{ther_main}, yielding the existence of a martingale solution.

\begin{stepinner}
\item {\bf Definition of the operator $\Vcal_\kappa$.}

First, define
$$\Vcal_\kk:=\mathcal{V}_{\kk,\Afrak}:\subX\subset\CM_\MA^{2,m_0}(\BY,\BX)\to\CM_\MA^{2,m_0}(\BY,\BX)$$
by
\[\mathcal{V}_\kk(\eta,\xi):=(\uk,\wk),\quad\text{for}\quad(\eta,\xi)\in\subX\]
where $\uk$ is a solution to
\begin{equation}\label{eq:cutoffuu}
\left\{ \barray  d {\uk }(t) &=&  \left[r_u \Delta   (\uk (t))^{[\gamma]} -\chi \stoppt\uk (t)\,\xi ^2(t) \right]\, dt
+\sigma_1\uk (t) \, dW_1(t),
 \\
  \uk (0) &=& u_0  , \phantom{\Big\vert}\earray\right.
\end{equation}
and $\vk$ is a solution to
\begin{equation}\label{eq:cutoffvv}
\left\{ \barray
d{\vk }(t) &=& \left[ r_v\Delta \vk (t)  + \stoppt \eta (t)\,\xi ^2(t) \right]\,dt
 +\sigma_2 \vk (t)\, d W_2(t) ,
\\
 {\vk }(0) &=& v_0 . \phantom{\Big\vert}\earray\right.
\end{equation}

The operator $\mathcal{V}_\kk$ is well-defined for $(\eta,\xi)\in\subX$. In fact, by Theorem \ref{theou1} and Proposition \ref{positivityu}, given such a pair of processes $(\eta,\xi)$, the existence of a nonnegative unique solution $\uk$ to \eqref{eq:cutoffuu} for nonnegative initial data $u_0$ with
\begin{align} \label{constantu}
 \EE\Vert\uk\Vert_{\BY}^2 \le& C_1(\kk,T)
\end{align} 
follows. By Proposition \ref{propvarational} and Proposition \ref{positivityv}, the existence of a unique solution $v_\kappa$ to \eqref{eq:cutoffvv} with
$$ \EE \Vert\vk\Vert_{\BH}^{m_0}
\le \EE \vert v_0\vert_{H^\rho_2}^{m_0} +C(\kk,T)\Reins,
$$
follows.

\item {\bf The $\Vcal_\kappa$-invariant set $\Xcal$.}

Let
\begin{align*} 
C_0(T)\Big(\EE \vert u_0\vert^{p^\ast_0}_{L^{p^\ast}}+1\Big)\le\Reins ,
\end{align*} 
where the constant $C_0(\gamma,T)$ is as in \eqref{uniformlpbounds}.
Then, due to Proposition \ref{uniformlpbounds}, we know that
\begin{align*} 
\EE\left[ \sup_{0\le s\le T} \vert\uk(s)\vert^{p^\ast_0}_{L^{p^\ast}}\right]\vee\EE\left[ \sup_{0\le s\le T} \vert\uk(s)\vert_{L^{\gamma+1}}^{\gamma+1}\right]
 \le \Reins.
\end{align*} 
Furthermore, by Proposition \ref{propvarational}, the existence of a unique
solution to \eqref{eq:cutoffvv} such that
\begin{align} \label{constantvkk}
\EE \Vert\wk\Vert_{\BH}^{m_0}\le&  \EE\vert v_0\vert^{m_0}_{H^\rho_2}+R_0 C_2(\kk,T)
\end{align} 
follows.

Let $\Rdrei:= \EE\vert v_0\vert^{m_0}_{H^\rho_2}+R_0 C_2(\kk,T)$.
Finally, by Proposition  \ref{semigroup} we can show that for $\Rzwei\ge C(T)\left\{ \EE\vert v_0\vert_{H_m^{-\delta_0}}^{m_0}+C(\kappa)\right\}$
(here the constants are given by Proposition   \ref{semigroup})
\begin{align} \label{constantvkk1}
\EE \Vert\wk\Vert_{\BZ}^{m_0}\le&  \Rzwei.
\end{align} 

Then,
$\Vcal_\kk$ maps $\mathcal{X}_\MA(\Reins,\Rzwei,\Rdrei)$ into itself.
Hence, assumption (i) of Theorem \ref{ther_main} is satisfied.

\item {\bf Continuity of $\Vcal_\kappa$ on $\Xcal$.}

Next we show that assumption (ii) of Theorem \ref{ther_main} is satisfied.
We need to show that
the restriction of the operator $\mathcal{V}_\kappa$ to $\subX$ is continuous in the strong topology of $\CM_{\mathfrak{A}}^{2,m_0}(\BY,\BX)$.
Let $\{(\eta^{(n)},\xi^{(n)}):n\in\NN\}\subset \subX$ be  a sequence converging strongly to $(\eta,\xi)$ in $\CM_{\mathfrak{A}}^{2,m_0}(\BY,\BX)$.

Firstly, due to Proposition \ref{continuity} and Remark \ref{variationalremark}, we know that the sequence $\{\uk^{(n)}:n\in\NN\}$ where $\uk^{(n)}$ solves
\begin{equation}\label{eqv1kk}
\left\{ \barray
d{\uk ^{(n)}}(t) &=& \left[ r_u\Delta (\uk^{(n)}(t))^{[\gamma]}   -\chi  \phi_\kappa(h(\eta^{(n)},\xi^{(n)},t))\, \uk^{(n)} (t)\,(\xi^{(n)}(t)) ^2 \right]\,dt\\
&& \quad +\sigma_1 \uk ^{(n)}(t)\,d W_1(t) ,
\\
 u_\kappa^{(n)}(0) &=& u_0 . \phantom{\Big\vert}\earray\right.
\end{equation}
converges in $\CM_{\mathfrak{A}}^{2,m_0}(\BY,\BX)$ to $\uk$ which solves \eqref{eq:cutoffuu}.

Secondly, due to Proposition \ref{semigroup_continuous} and keeping Remark \ref{variationalremark} in mind, it follows that
the sequence $\{\vk^{(n)}:n\in\NN\}$, where $\vk^{(n)}$ solves
\begin{equation}\label{equ1kk}
\left\{ \barray
d{\vk ^{(n)}}(t) &=& \left[ r_v\Delta \vk^{(n)} (t)  + \phi_\kappa(h(\eta^{(n)},\xi^{(n)},t))\, \eta^{(n)} (t)\,(\xi^{(n)}(t)) ^2 \right]\,dt\\
&& \quad +\sigma_2 \vk ^{(n)}(t)\,d W_2(t) ,
\\
 v_\kk^{(n)}(0) &=& v_0 . \phantom{\Big\vert}\earray\right.
\end{equation}
converges in $\CM_{\mathfrak{A}}^{2,m_0}(\BY,\BX)$ to $\vk$, where $\vk$ solves \eqref{eq:cutoffvv}.

This gives that the operator
$$\Vcal_\kk\vert_{\subX}:\subX\subset\CM_{\mathfrak{A}}^{2,m_0}(\BY,\BX)\to\CM_{\mathfrak{A}}^{2,m_0}(\BY,\BX)$$
is continuous.

\item {\bf Tightness.}

Next we show that assumption (iii) of Theorem \ref{ther_main} is satisfied.

First, note that the embedding ${\mathbb{Z}'}\hookrightarrow {\mathbb{Z}}$
is compact by the Aubin-Lions-Simon Lemma \ref{th-gutman}.

Second, by Proposition \ref{semigroup} it follows that for any $\kk\in\NN$
there exists a constant $C=C(\kk,T)>0$ such that for all $(\eta,\xi)\in\subX$
$$
\EE\, \vert\vk\vert^{m_0}_{\mathbb{Z}'}\le C,
$$
where $\vk$ solves \eqref{eq:cutoffvv}. Observe that we have the compact embedding $L^{\gamma+1}(\CO)\hookrightarrow H^{-1}_2(\CO)$. Hence, by Proposition \ref{CC2} and by standard arguments (cf. \cite{ethier}), we know that
the laws of the set $\{ \uk:(\eta,\xi)\in\subX\}$ are tight on $C([0,T];H^{-1}_2(\CO))$.
By Proposition \ref{CC2}  and the Aubin-Lions-Simon Lemma \ref{th-gutman},
the laws of  $\{ \uk:(\eta,\xi)\in\subX\}$ are tight on $L^{\gamma+1}(0,T;L^{\gamma+1}(\CO))$.
Therefore, it follows that the set $\{ \mathcal{V}_\kk(\eta,\xi):(\eta,\xi)\in\subX\}$ is tight on $\BY\times \BX$,
which implies assumption (iii) of Theorem \ref{ther_main} for
$$\X^\prime:=L^{\gamma+1}(0,T;L^{\gamma+1}(\CO))\times\mathbb{Z}^\prime\hookrightarrow \mathbb{Y}\times \BX=:\X$$

\item {\bf Continuity of paths.}

To show  assumption (iv) of Theorem \ref{ther_main}, we notice that
by choosing
$$U:=H^{-1}_2(\Ocal)\times H_2^{\rho}(\CO),$$
it follows from Proposition \ref{CC2} and standard arguments on continuity of solutions to nonlinear SPDEs
that
$$\mathcal{V}_\kk(\eta,\xi)\in C([0,T];U)\subset \mathbb{D}([0,T];U)\quad\PP\text{-a.s.}$$
for all
$(\eta,\xi)\in\subX$.

\item {\bf Existence of a fixed point.}

Finally, by Theorem \ref{ther_main}, for each $\kappa\in\N$, there exists a probability space
$\tilde{\mathfrak{A}}_\kk=(\tilde \Omega_\kk,\tilde{\CF}_\kk,\tilde{\mathbb{F}}_\kk,\tilde{\PP}_\kk)$, a Wiener process $\tilde W^\kk=(\tilde W^\kk_1,\tilde W^\kk_2)$, modeled on $\tilde {\mathfrak{A}}_\kk$, and elements
$$(\tilde u_{\kappa},\tilde v_{\kappa})\in\mathcal{X}_{\tilde{\mathfrak{A}}_\kk}(R_0,R_1,R_2)\cap C([0,T];U)$$
such that we have a $\tilde \PP$-a.s. fixed point
$$
\mathcal{V}_{\kk,\tilde{\Afrak}_\kappa}(\tilde u_{\kappa}(t),\tilde v_{\kappa}(t))=(\tilde u_{\kappa}(t),\tilde v_{\kappa}(t))
$$
for every $t\in[0,T]$.
Due to the construction of
$\mathcal{V}_{\tilde{\Afrak}}$, the pair $(\tilde u_{\kappa},\tilde v_{\kappa})$
solves the system \eqref{eq:cutoffuu}--\eqref{eq:cutoffvv} over the stochastic basis $\tilde{\Afrak}_\kk$ with the Wiener noise $\tilde W^\kk$, see Section \ref{sec:proof_of_schauder} for details.
\end{stepinner}
\end{proof}

{
\item
Here, we will construct a  family of solutions $\{(\bar u_{\kappa} ,\bar v_{\kappa} ):\kappa\in\NN\}$  following the solution to the  original problem until a stopping time ${\bar \tau}_\kappa$. In particular,  we will introduce for each $\kappa\in\NN$ a new pair of processes $({\bar u}_{\kappa} , {\bar v}_{\kappa} )$ following the Klausmeier system up to the stopping time ${\bar \tau}_\kappa$. Besides, we will have
$({\bar u}_{\kappa} , {\bar v}_{\kappa} )\vert_{[0,{\bar \tau}_\kappa)}=({\bar u}_{\kk+1},{\bar v}_{\kk+1})\vert_{[0,{\bar \tau}_\kappa)}$.

Let us start with $\kappa =1$. From Proposition \ref{prop.exist.n}, we know there exists a martingale solution consisting of a probability space $\mathfrak{A}_1=(\Omega_1,\CF_1,\mathbb{F}_1,\PP_1)$, two independent Wiener processes $({W}^1_1,{W}^1_2)$
defined over  $\mathfrak{A}_1$, and
 a couple of processes $(u_1,v_1)$ solving $\PP_1$--a.s.\ the system
 \begin{align*} 
\left\{ \barray  du_1(t)&=&\left[r_u \Delta u_1^{[\gamma]}(t) -\chi \,\phi_1(h(u_1,v_1,t))\, u_1 (t)\,v_1^2(t)\right] dt+ \sigma_1 u_1(t)\,d {W}^1_1(t),
 \\
 dv_1(t)&=&\left[r_v \Delta v_1(t)+ \,\phi_1(h(u_1,v_1,t))\, u_1 (t)\,v_1^2(t)
\right]\,dt+\sigma_2 v_1(t)\, d {W}^1_2(t),
 \\ (u_1(0),v_1(0))&=&(u_0,v_0).
 \earray\right.\end{align*} 
 Let us define now the stopping time
\begin{align*}
\tau_1^\ast:=\inf \{s\ge 0\;\colon\; h(u_1,w_1,s) \ge 1 \}.
\end{align*}
Observe, on the time
interval $[0,\tau_1^\ast)$, the pair $(u_1,v_1)$ solves the system given in
\eqref{equ1n}.
Now, we define a new pair of processes $(\bar u_1,\bar v_1)$  following $(u_1,v_1)$  on $[0,\tau_1^\ast)$ and extend this processes
 to the whole interval $[0,T]$ in the following way.
First, we put $\overline{\mathfrak{A}}_1:=\mathfrak{A}_1$ and $\bar {{W}}_j^1:={W}_j^1$, $j=1,2$,
and let us introduce the processes $y_1$ and $y_2$
being a strong solution over $\overline{\mathfrak{A}}_1$ to
 \begin{eqnarray} 
d y_1(t, u_1(\tau^\ast_1),\sigma) &=&  r_u\Delta y_1^{[\gamma]}(t, u_1(\tau^\ast_1),\sigma)\,dt + \sigma_1 y_1(t, u_1( \tau^\ast_1),\sigma) \,d(\theta_{\sigma} \bar{{W}}^1_1)(t)
  \nonumber \\ \label{eq1} y_1(0,u_1(\tau^\ast_1))&=&u_1(\tau^\ast_1){}
,\quad t\ge 0 ,\\ \nonumber
 y_2(t, v_1( \tau^\ast _1),\sigma) &=&e^{r_v\Delta t} v_1(\tau^\ast_1)
 \\
 &&+ \int_{0}^t  e^{(r_v\Delta )(t-s)} \sigma_2 y_2 (s, v_1(\tau^\ast _1),\sigma) \,d(\theta_{\sigma} \bar{{W}}^1_2)(s)
,\quad t\ge 0.  \label{eq2}
\end{eqnarray} 
Here, $\theta_\sigma$ is the shift operator which maps ${W}_j(t)$ to ${W}_j(t+\sigma)-W_j(\sigma)$.
Since the couple $(u_1,v_1)$  is continuous in $H_2^{-1}(\CO)\times H^\rho_2(\CO)$, we know that  $u_1(\tau^\ast_1)$ and $v_1(\tau^\ast_1)$ are well-defined random variables belonging $\PP_1$-a.s.\ to $L^2(\CO)$ and $H^\rho_2(\CO)$, respectively.
By \cite[Theorem 2.5.1]{BDPR2016} the existence of a unique solution $y_1$ over $\overline{\mathfrak{A}}_1$ to \eqref{eq1} in $H^{-1}_2(\CO)$ is given. Since $(e^{t(r_v\Delta -\tilde{\alpha} \operatorname{Id})})_{t\ge 0}$ is an  analytic semigroup on  $H^\rho_2(\CO)$ for $\tilde{\alpha}>0$,
the existence of a unique solution $y_2$  over $\overline{\mathfrak{A}}_1$ to \eqref{eq2}  in $H^\rho_2(\CO)$
can be shown by standard methods, cf. \cite{DaPrZa:2nd}. Furthermore, it is straightforward to verify that the assumptions on the initial conditions are satisfied.
Now, let us define two  processes $\bar u_1 $ and $\bar v_1$
being identical to $u_1$ and $v_1$, respectively,  on the time interval $[0,\tau^\ast_1)$ and
following the porous media, respective, the heat equation (with lower order terms) with noise and without nonlinearity, i.e.,   $y_1(\cdot,u_1(\tau^\ast_1),\tau^\ast_1)$ and $y_2(\cdot,v_1(\tau^\ast_1),\tau^\ast_1)$, afterwards.
In particular, let
$$
\bar u_1  (t) = \bcase u_1(t) & \mbox{ for } 0\le t< \tau^\ast_1,\\
y _1(t,u_1(\tau^\ast_1),\tau^\ast_1) & \mbox{ for } \tau^\ast_1\le  t \le T,\ecase
$$
and
$$
\bar v_1  (t) = \bcase v_1 (t) & \mbox{ for } 0\le t< \tau_1^\ast,\\
y_2 (t,v_1(\tau^\ast_1),\tau_1^\ast
)  & \mbox{ for } \tau_1^\ast \le  t \le T.\ecase
$$
Let us now construct the probability space and the processes for the next time interval. First, let $(u_1(\tau^\ast_1),v_1(\tau^\ast_1))$ have probability law $\mu_1$ on $H_2^{-1}(\CO)\times H^\rho_2(\CO)$. Again, from Proposition \ref{prop.exist.n}, we know
there is a martingale solution consisting of a probability space $\mathfrak{A}_2=(\Omega_2,\CF_2,\mathbb{F}_2,\PP_2)$, a pair of independent Wiener processes $({W}_1^2,{W}_2^2)$ such that $({\bar{W}}_1^1,{\bar{W}}_2^1,{W}_1^2,{W}_2^2)$
are independent as well, a couple of processes $(u_2,v_2)$ solving $\PP_2$-a.s.\ the system
 \begin{align*} 
\left\{ \barray  du_2(t)&=&\left[r_u \Delta  u^{[\gamma]}_2(t) -\chi \,\phi_2(h(u_2,v_2,t))\, u_2 (t)\,v_2^2(t)\right] dt+ \sigma_2 u_2(t) \, d{W}^2_1(t),
 \\
 dv_2(t)&=&\left[r_v \Delta v_2(t)+\phi_2(h(u_2,v_2,t))\, u_2 (t)\,v_2^2(t)
\right]\,dt+\sigma_2 v_2(t)\, d {W}^2_2(t),
 \\&&(u_2(0),v_2(0))\sim\mu_1.
 \earray\right.\end{align*} 
 with initial condition $(u_2(0),v_2(0))$ having law $\mu_1$.
 Let us define now the stopping times on $\mathfrak{A}_2$,
\begin{align*}
\tau_2^\ast:=\inf \{s\ge 0:h(u_2,v_2,s) \ge 2 \}.
\end{align*}
Let $\overline{\mathfrak{A}}_1:=(\overline{\Omega}_1,\overline{\CF}_1,\overline{\mathbb{F}}_1,\overline{\PP}_1):=\mathfrak{A}_1$, with $\overline{\mathbb{F}}_1:=(\overline{\CF}^1_t)_{t\in [0,T]}$.
Let $\overline{\Omega}_2:=\overline{\Omega}_1\times\Omega_2$, $\overline{\CF}_2:=\overline{\CF}_1\otimes \CF_2$, $\overline{\PP}_2:=\overline{\PP}_1\otimes\PP_2$ and let
$\overline{\mathbb{F}}_2:=(\overline{\CF}^2_t)_{t\in [0,T]}$, where
\begin{align*}&\overline{\CF}^2_{t}:=
\sigma\left(\left\{A\cap\{t<\tau^\ast_1\}\;\colon\;A\in\overline{\Fcal}_t^1\right\}\cup\overline{\Fcal}_0^1\cup\left\{A\cap\{t\ge \tau^\ast_1\}\;\colon\;A\in\Fcal_{t-\tau_1^\ast}^2\right\}\right),\end{align*}
where
\[\Fcal^2_{t-\tau_1^\ast}:=\left\{A\in\Fcal^2_\infty\;\colon\;A\cap\{t-\tau_1^\ast\le s\}\in\Fcal_s^2\;\text{for all }s\ge 0\right\},\]
and where $\Fcal^2_\infty:=\sigma\left(\bigcup_{t\ge 0}\Fcal^2_t\right)$.
Let $\overline{\mathfrak{A}}_2:=(\overline{\Omega}_2,\overline{\CF}_2,\overline{\mathbb{F}}_2,\overline{\PP}_2)$.
Finally, let us set for $j=1,2$
$$\bar{{W}}_j^2(t):=\bcase \bar{{W}}^1_j(t), & \mbox{if} \quad t<\tau_1^\ast,
\\   {W}^2_j({t-\tau_1^\ast})+\bar{{W}}^1_j(\tau_1^\ast), & \mbox{if}\quad  t\ge \tau_1^\ast.
\ecase
$$
which give independent Wiener processes for $j=1,2$, w.r.t. the filtration $\overline{\mathbb{F}}_2$.

Now, let us define two  processes $\bar u_2 $ and $\bar v_2$
being identical to $\bar u_1$ and $\bar v_1$, respectively,  on the time interval $[0,\tau^\ast_1)$, being identical to $u_2$ and $v_2$ on the time interval $[\tau^\ast_1,\tau^\ast_1+\tau_2^\ast)$ and
following the porous media, respective, the heat equation  (with lower order terms) with multiplicative noise, afterwards.
Let us note, for any initial condition having distribution equal to $u_2(\tau_2^\ast)$ and $v_2(\tau^\ast_2)$ that there exists a strong solutions  $y_1(\cdot ,\cdot ,\tau^\ast_2+\tau^\ast_1)$ and $y_2(\cdot,\cdot ,\tau^\ast_2+\tau^\ast_1)$ of the systems \eqref{eq1} and \eqref{eq2}, respectively, on $\overline{\mathfrak{A}}_2$.
Let for $t\in[0,T]$
$$
\bar u_2  (t) = \bcase \bar u_1(t) & \mbox{ for } 0\le t< \tau^\ast_1,\\
  u_2(t-\tau_1^\ast ) & \mbox{ for }  \tau^\ast_1\le t\le \tau^\ast_1+\tau_2^\ast,\\
y _1(t-(\tau^\ast_1+\tau^\ast_2),u_2(\tau^\ast_2),\tau^\ast_1+\tau^\ast_2) & \mbox{ for } \tau^\ast_2+\tau^\ast_1\le  t \le T,\ecase
$$
$$
\bar v_2  (t) = \bcase v_1 (t) & \mbox{ for } 0\le t< \tau_1^\ast,\\
  v_2(t-\tau_1^\ast ) & \mbox{ for } \tau^\ast_1\le t\le \tau^\ast_1+\tau_2^\ast,\\
y_2 (t-(\tau^\ast_1+\tau^\ast_2),v_2(\tau^\ast_2),\tau_1^\ast+\tau^\ast_2
)  & \mbox{ for } \tau_1^\ast+\tau^\ast_2 \le  t \le T.\ecase
$$
In the same way we will construct for any $\kappa \in\NN$ a probability space $\overline{\mathfrak{A}}_\kappa $, a pair of independent Wiener processes $(\bar{{W}}^1_\kappa ,\bar{{W}}^2_\kappa )$, over $\overline{\mathfrak{A}}_\kappa $ and a pair of processes $(\bar{u}_\kappa,\bar{v}_\kappa)$ starting at $(u_0,v_0)$ and solving system  \eqref{eq:cutoffu}-\eqref{eq:cutoffv} up to time

\begin{align} \label{stopp_time}\bar \tau_\kappa :=\tau_1^\ast+\cdots +\tau_\kappa ^\ast
\end{align}  and following  the porous media, respective, the heat equation  afterwards.
 Besides, we know that
  $(\bar u_{\kappa} ,\bar v_{\kappa} )\vert_{[0,\bar \tau _{\kappa -1})}=(\bar u_{\kappa -1},\bar v_{\kappa -1})\vert_{[0,\bar \tau_{\kappa -1})}$.

\item {\bf Uniform bounds on the stopping time.}

Let us consider the family $\{ (\bar{u}_\kappa,\bar{v}_\kappa)\;\colon\; \kk\in\NN\}$. The next aim is to show that there exists $\Reins,\Rzwei$ and $\Rdrei>0$ independent of $\kk\in\NN$ such that
$\{ (\bar{u}_\kappa,\bar{v}_\kappa)\;\colon\; \kk\in\NN\}\subset \subX$.

First, that due to Proposition \ref{uniform1}  there exists a constant $C_0(1,T)>0$ such that
\begin{align} \label{est2}
\\\nonumber
\lqq{
\EE \left[\sup_{0\le s\le \bar\tau_\kappa\wedge T} \vert\buk (s)\vert_{L^{2} }^{2}\right]
+ 2\gamma r_u\EE \int_0^ {\bar\tau_\kappa\wedge T}\int_\CO  \vert\buk (s,x )\vert^{\gamma-1}\vert\nabla \buk (s,x)\vert^2\, dx \, ds} &
\\\nonumber &{} +2\chi\EE \int_0^{\bar\tau_\kappa \wedge T} \int_\CO  \vert\buk(s,x )\vert^2\vert \bvk (s,x )\vert^2\, dx \, ds \le C_0(1,T)\,\left( \EE\vert u_0\vert_{L^{2}}^{2}+1\right).
\end{align} 
From above we can conclude that
there exists a constant $C>0$ such that
\begin{align*} 
\EE \Vert\buk\mathbbm{1}_{[0,\bar\tau_\kappa\wedge T]}\Vert_{\BY}^2\le C.
\end{align*} 
Here, it is important that
 $\buk\ge 0$.
This estimate can be extended to the time interval $[0,T]$ by standard results (see e.g.\ \cite{BDPR2016} and  Proposition \ref{uniform1}).
{Next, let us assume  $\Rzwei>0$ is that large that
\begin{align} \label{constantR2}
C(T)\left(
 \EE\vert v_0\vert_{H^{-\delta_0}_{m}}^{m_0}+ \Reins^{\delta_1}\,
\Rdrei^{\delta_1}\right)\le \Rdrei,
\end{align} 
where the constants $C(T)$, $\delta_0$, $\delta_1$ are given in Proposition \ref{propvarational2}.
Observe, by Proposition \ref{propvarational2},
we have for $l$ as in Hypothesis \ref{init}
\begin{align}  \label{constantR22}\hspace{+6ex}
\EE  \Vert\bvk\Vert_{\mathbb{Z}}^{m_0}\le C(T)\left(
 \EE\vert v_0\vert_{H^{-\delta_0}_{m}}^{m_0}+\Bigg(\EE \left(\int_0^{\bar\tau_\kk \wedge T}  \uk^{2}(s)\vk^2(s)\,ds
\right)^l\Bigg)^{\delta_1}\,
\Rzwei^{\delta_1}\right).
\end{align} 
Then,  by Proposition \ref{uniform1}, we know that
for $p=1$,
 the term
$$
\EE \left(\int_0^{\bar\tau_\kk \wedge T}  \uk^{2}(s)\vk^2(s)\,ds
\right)^l\le\EE \vert u_0\vert_{L^{2}}^l,
$$
which can be uniformly bounded by $\Reins^l$. Hence, we conclude by \eqref{constantR2} that $\EE  \Vert\bvk\Vert_{\BZ}^{m_0}\le \Rzwei$
and the choice of $\Rdrei$ that $\EE  \Vert\bvk\Vert_{\BH}^{m_0}\le \Rdrei$.
Finally, by Proposition \ref{interp_rho} we know for $\frac d2 -\rho\le \frac 2m+\frac d {m_0}$ that
$\BH\hookrightarrow \mathbb{Z}$. Due to the choice of $m$ and $m_0$, we know that the inequality is satisfied.
Hence, setting
\begin{align} \label{constantR3}
\Rzwei\ge&\Rdrei,
\end{align} 
we know that
$\EE  \Vert\bvk\Vert_{\mathbb{Z}}^{m_0}\le \Rzwei$.
In particular, there exists $\Reins,\Rzwei$ and $\Rdrei$ such that for all $\kk\in\NN$  we have
\begin{equation}\label{constantR1}\EE\left[\sup_{0\le s\le T}\vert\bar u_\kappa(s)\vert_{L^{p^\ast}}^{p^\ast_0}\right]\le \Reins^{p_0^\ast},
\end{equation}
  $\EE \Vert\bvk\Vert_{\BZ}^{m_0}\le \Rzwei$, and
$\EE  \Vert\bvk\Vert_{\BH}^{m_0}\le \Rdrei$. 
}

}

\item {\bf Passing on to the limit.}

In the final step, we will show that $\PP$-a.s. a martingale  solution to \eqref{eq:uIto}-\eqref{eq:vIto} exists.

\begin{claim} Let
\[(\Omega,\Fcal,\mathbb{F},\P):=\left(\prod_{\kappa\in\N}\overline{\Omega}_\kappa,\bigotimes_{\kappa\in\N}\overline{\Fcal}_\kappa,\overline{\mathbb{F}}_\infty,\bigotimes_{\kappa\in\N}\overline{\P}_\kappa\right),\]
where
$\overline{\mathbb{F}}_\infty:=(\overline{\CF}^\infty_t)_{t\in [0,T]}$, where, setting $\tau^\ast_0:=0$,
$$\overline{\CF}^\infty_t:= \bigcap_{\kappa\in\N}\overline{\CF}^\kappa_t.
$$
There exists a measurable set $\Omega_0\subset \Omega$ with $\PP(\Omega_0)=1$ such that a martingale solution $(u,v)$ to the
system \eqref{eq:uIto}-\eqref{eq:vIto} exists on $(\Omega_0,\Fcal\cap\Omega_0,\mathbb{F}\cap \Omega_0,\P\vert_{\Omega_0},(\bar{W}_1^\infty\vert_{\Omega_0},\bar{W}_2^\infty\vert_{\Omega_0}))$,
where for $j=1,2$, setting $\tau^\ast_0:=0$,
$$\bar{W}_j^\infty(t):= \bar{W}_j^\kappa(t), \text{\;if} \quad t\in [\tau_{\kappa-1}^\ast,\tau_\kappa^\ast),\quad\kappa\in\N.
$$
\end{claim}
\begin{proof}
For any $\kappa\in\NN$,
let us define the set
		$$A_\kappa :=
		\left\{ \omega\in \Omega\;\colon\;\bar\tau_\kappa(\omega)\ge T\right\}.
		$$
		It can be clearly observed that
		there exists a progressively measurable process $(u,v)$ over $\mathfrak{A}$
such that $(u,v)$ solves $\PP$--a.s. the integral equation given by
	\eqref{eq:uIto}-\eqref{eq:vIto} up to time $T$. In particular, we have for the conditional probability
		$$
		\PP\left( \{ \mbox{there exists a solution $(u,v)$ to \eqref{eq:uIto}-\eqref{eq:vIto}} \} \mid A_\kappa\right)=1.
		$$
		Set $\Omega_0=\bigcup_{\kappa=1}^{\infty} A_\kappa$. Then, it is elementary to verify that
		\begin{align*} \lqq{
			\qquad\PP\left( \left\{ \mbox{there exists a solution $(u,v)$ to \eqref{eq:uIto}-\eqref{eq:vIto}}
			\right\}\cap\Omega_0\right) }\\
			=& \lim_{\kappa\to \infty} \PP\left(  \{ \mbox{there exists a solution $(u,v)$ to \eqref{eq:uIto}-\eqref{eq:vIto}} \} \cap A_\kappa \right)\\
		=& \lim_{\kappa\to \infty} \left[\PP\left(  \{ \mbox{there exists a solution $(u,v)$ to \eqref{eq:uIto}-\eqref{eq:vIto}} \} \mid A_\kappa \right)\PP\left( A_\kappa\right)\right].
		\end{align*} 
		Since $ \PP\left(  \{ \mbox{a solution $(u,v)$ to \eqref{eq:uIto}-\eqref{eq:vIto} exists} \} \mid A_\kappa\right)=1$, it remains to show that
		${\lim_{\kappa\to\infty}}$ $\PP( A_\kappa)=1$. Then, as $A_\kappa\subset A_{\kappa+1}$ for $\kappa\in\NN$, it follows that
		$$\PP\left( \left\{ \mbox{there exists a solution $(u,v)$ to \eqref{eq:uIto}-\eqref{eq:vIto}}
		\right\}\cap\Omega_0\right)=1.
		$$
		However, due to Step IV, there exists a constant $C(T)>0$ such that
		$$
		\EE\left[ h(\buk,\bvk,t)\right]\le C(T),\quad t\in[0,T],\;\kk\in\NN,
		$$
		thus by the Markov inequality,
		$$\PP\left(\Omega\setminus  A_\kappa\right) \le \frac {C(T)}{\kappa }\to 0,
$$ as $\kappa\to\infty$.
		Thus the solution process is well-defined on $\Omega_0=\bigcup_{\kappa=1}^{\infty} A_\kappa$ with $\PP(\Omega_0)=1$.
		\end{proof}

\end{steps}

The proof of Theorem \ref{mainresult} is complete.

\end{proof}

\section{Proof of the stochastic Schauder-Tychonoff theorem}\label{sec:proof_of_schauder}

\begin{proof}[Proof of Theorem \ref{ther_main}]

Fix
$\Afrak$ and $W$, and $R_1,\ldots,R_K>0$, $K\in\N$. In addition, for simplification, we shall omit $R_1,\ldots,R_K$ in the notation for $ \Xcal_{R_1,\ldots,R_K}(\Afrak)$ and write $ \Xcal(\Afrak)$ instead of $ \Xcal_{R_1,\ldots,R_K}(\Afrak)$. Fix an initial datum $w_0\in L^m(\Omega,\Fcal_0,\mathbb{P};E)$.
\begin{step}
\item

In the first step we will approximate the operator $\mathcal{V}_{\Afrak,W}$. We shall discretize time, as we would like apply the classical Schauder-Tychonoff theorem in a compact subset which is given by a tight collection of laws on a finite time grid. 
Let us fix a sequence $\{\ep_\iota:\iota\in\N\}$ such that $\ep_\iota\to 0$.

First, let us introduce a dyadic time grid $\pi_n=\{t_0=0<t_1<t_2<\cdots <t_{2^n}=T\}$
by $t_k= T\frac{k}{ 2^n}$, $k=0,1,2,\ldots,2^n$.
The stochastic process will be approximated by an averaging operator over the dyadic time interval.
To this end, let us define a step-function
$\phi_n:[0,T]\to[0,T]$ by $\phi_n(s)=T\frac{k}{ 2^n}$, if
$k=0,1,2,\ldots,2^n-1$ and $T\frac{k}{ 2^n}\le s<T\frac{k+1}{2^n}$, i.e.,
$\phi_n(s)= T2 ^ {-n}\lfloor 2 ^ ns\rfloor$, $s\ge 0$, where $\lfloor t\rfloor$ is the largest
integer that is less or equal $t\in \mathbb{R}$. Let $\{w_n:n\in\NN\}\subset L^m(\Omega,\Fcal_0,\mathbb{P};E)$ be a sequence, such that $w_n\to w_0$ in $L^m(\Omega,\Fcal_0,\mathbb{P};E)$ and $$\Vert w_n-w_0\Vert_{L^m(\Omega,\Fcal_0,\mathbb{P};E)}\le \frac {\ep_n}n.$$

For a function $\xi\in\Mcal_{\Afrak}^{m}(\X)$, we define
\newcommand{\Pro}{\mbox{Proj}}
\begin{eqnarray}\label{hatdefined} \Pro_n(\xi)(s):=
\bcase w_n, &
\mbox{ if } s\in [0,T2^{-n}),
\\
 \frac{2^n}{T}\int_{\phi_n(s)-T2^{-n}}^{\phi_n(s)} \xi(r)\: dr, &\mbox{ if
} s\geq T2^{-n}.\ecase
\end{eqnarray}
Note, that $\Pro_n(\xi)$ is a
progressively measurable, $\P$-a.s. piecewise constant, $U$-valued stochastic process.
\begin{remark}\label{projection}
Observe that the projection operator satisfies
\begin{enumerate}[(i)]
\item $\Pro_n$ is a linear bounded contraction operator from $\mathbb{X}$ into $\mathbb{X}$;
\item If $B$ is a bounded subset of $\mathbb{X}$,
then for all $\ep>0$ there exists a $n_0\in\NN$ such that
$$\Vert \Pro_n (\xi) - \xi\Vert_{\mathbb{X}}<\ep,\quad \xi\in B,\quad n\ge n_0,
$$
\end{enumerate}
cf. \cite[Appendix B]{BHM}.
\end{remark}
Thus, due to Remark \ref{projection} and the uniform continuity of $\mathcal{V}_{\mathfrak{A},W}$ on the bounded set $\mathcal{X}(\mathfrak{A})$, we know that for any $\ep>0$, there exists some $n_0\in\NN$ such that for every $\xi$ uniformly in
$\mathcal{V}_{\mathfrak{A},W}\left( \mathcal{X}(\mathfrak{A})\right)$ such that for any $r\in (1,m]$,
$$
\left( \EE\Vert\Pro_n(\xi)-\xi\Vert^r_{\mathbb{X}}\right)^\frac 1r \le  \ep ,\quad \forall \xi\in \mathcal{V}_{\mathfrak{A},W}\left( \mathcal{X}(\mathfrak{A})\right),\quad \forall \, n\ge n_0.
$$
Let $\{n_\iota:\iota\in\NN\}$ be a sequence such that
\begin{equation}\label{eq:pointwiseeps}
\left( \EE \Vert\Pro_{n_\iota}(\xi)-\xi\Vert_{\mathbb{X}}^{m}\right)^\frac{1}{m}\le \frac {\ep_\iota} 3,\quad \forall \xi\in \mathcal{V}_{\mathfrak{A},W}\left( \mathcal{X}(\mathfrak{A})\right).
\end{equation}
Finally, let us define the operator
$$
\mathcal{V}^\iota _{\mathfrak{A},W}(\xi):=  (\Pro_{n_\iota} \circ\mathcal{V}_{\mathfrak{A},W})(\xi),\quad \xi\in \mathcal{X}(\mathfrak{A}).
$$

\item
Denote $\mathbb{U}:=\mathbb{D}([0,T];U)$, the Skorokhod space of c\`adl\`ag paths in $U$ endowed with the Skorokhod $J_1$-topolgy, see \cite[Appendix A2]{Kallenberg}. Given the probability space $\mathfrak{A}=\left(\Omega,\CF,\mathbb{F},\PP\right)$,
for any $\iota \in\NN$ this operator $\mathcal{V}^\iota _{\mathfrak{A},W}$
induces  an operator $\mathscr{V}_\iota$ on the set of Borel probability measures on $\mathbb{X}\cap\mathbb{U}$, denoted by  $\mathscr{M}_1(\mathbb{X}\cap\mathbb{U})$.
The construction of the operator $\mathscr{V}_\iota$ is done in the this step.

Define
the subset of probability measures $\mathscr{X}$ given by
\begin{align*}
\lqq{ \mathscr{X}:=\left\{ \mu\in \mathscr{M}_1(\mathbb{X}\cap\mathbb{U}):\int_{\mathbb{X}}\Psi_j(\xi)\,\mu(d\xi)\le R_j,\right.
}
\\
&& \left. \phantom{\int_{\mathbb{X}}}
\qquad
\mbox{ and } \mu\left(\{\Theta_j<\infty\}\right)=1\, \quad \forall j=1,\ldots,K\right\}.
\end{align*}
Now, let $\mu\in\mathscr{X}$. Then, by the Skorokhod lemma \cite[Theorem 4.30]{Kallenberg}, we know that there exists a
probability space  $\mathfrak{A}_0=(\Omega_0,\CF^0,\PP_0)$ and a random variable
$\xi:\Omega_0\to \mathbb{X}\cap\mathbb{U}$ such that the law of $\xi$ coincides with $\mu$.
In particular,
the probability measure $\nu_\xi:\CB(\mathbb{X}\cap\mathbb{U})\to[0,1]$ induced by $\xi$ and given by
$$
\nu_\xi:\CB(\mathbb{X}\cap\mathbb{U})\ni A\mapsto \PP_0\left( \left\{ \omega:\xi(\omega)\in A\right\}\right)
$$
coincides with the probability measure $\mu$.

Due to the definition of $\mathbb{U}$, we know that $\xi$ is a progressively measurable stochastic process, in particular, $\xi:\Omega_0\times [0,T]\to U$ such that $\PP_0(\xi\in\mathbb{X}\cap\mathbb{U})=1$.
Let
				$$
			 \CG^0_t:=\sigma \left( \left\{\,\xi (s)\;\colon\; 0\le s\le t \, \right\}\cup \CN_0\right),\quad t\in [0,T],
				$$
				where $\CN_0$ denotes the collection of zero sets of ${\mathfrak{A}}_0$.
				Set $\mathfrak{A}_0:=(\Omega_0,\CF^0,(\CG_t^0)_{t\in[0,T]},\PP_0)$.

Next, we have to construct the  Wiener process and extend the probability space.
				 Now, let $\mathfrak{A}_1=(\Omega_1,\CF^1,(\CF_t)_{t\in[0,T]},\PP_1)$ be a probability space
				where a cylindrical Wiener process $W$ on $H$ is defined,
				and let $\mathfrak{A}_{\mu}$ the product probability space of $\mathfrak{A}_0$ and $\mathfrak{A}_1$.
				In particular, we set
\begin{align*}
&\Omega_\mu =  \Omega_0\times \Omega_1,
\\
&\CF_\mu = \CF^0\otimes \CF^1,
\\
&\CG^\mu_t = \CG^0_t\otimes \CG^1_t,\quad t\in [0,T],
\\
\mbox{and}\quad &\PP_\mu = \PP_0\otimes \PP_1.
\end{align*}
Here, we know that $\xi(t)$, $t\in [0,T]$, is independent of the increments $W(t')-W(t)$, $t'>t$ and $\{\CG_t^\mu\}_{t\in [0,t]}$-progressively measurable.
Since $\mu\in\mathscr{X}$, we know $\xi\in\mathcal{X}(\mathfrak{A}_\mu)$.

Next, we have to verify if the family    operators
$$
\left\{ \Vcal^\iota_{\mathfrak{A}_\mu,{W}}: \iota\in\NN\right\}
$$
is well-defined. However, this is follows from assumption (i),
and since $\xi\in\mathcal{X}(\mathfrak{A}_\mu)$. In fact, the $\mu$-dependence of the stochastic basis can be removed by lifting to the space of probability measures (path laws). We aim to find a fixed point in the space of probability measures.

Now, for $A\in\CB(\mathbb{X}\cap\mathbb{U})$. Then, let us define the mapping $\mathscr{V}_\iota$ that maps
the probability measure $\mu$, in other words, the probability measure $\nu_\xi:\CB(\mathbb{X}\cap\mathbb{U})\to[0,1]$ that is induced by $\xi$ to the probability measure $\nu_{\mathcal{V}_{\Afrak_\mu,W}^\iota(\xi)}:\CB(\mathbb{X}\cap\mathbb{U})\to[0,1]$ given by
$$
\nu_{\mathcal{V}_{\Afrak_\mu,W}^\iota(\xi)}(A):=\PP_\mu\left( \left\{ \omega\in\Omega_\mu: \Vcal^\iota_{\mathfrak{A}_\mu,W}(\xi(\omega))\in A\right\}\right),\quad \mathscr{V}_\iota(\mu):=\nu_{\mathcal{V}_{\Afrak_\mu,W}^\iota(\xi)}.
$$
Note, since $\mathbb{X}\cap\mathbb{U}$ is a complete metric space,
the space of probability measures over $\mathbb{X}\cap\mathbb{U}$ equipped with the
Prokhorov metric\footnote{Let $\mathscr{M}_1(X)$ be the set of Borel probability measures on the metric space $(X,d)$ equipped with the weak topology. Let $\nu,\mu\in\mathscr{M}_1(X)$. Then the weak topology can be metrized by the \emph{Prokhorov metric}, cf. \cite{dudley2002},
$$d_\alpha(\mu,\nu):=\inf\{\alpha>0: \mu(A)\le \nu(A_\alpha)+\alpha \mbox{ and }
\nu(A)\le \mu(A_\alpha)+\alpha \mbox{ for all } A\in\CB(X)\}.
$$
Here $A_\alpha:=\{ x\in X: d(x,A)<\alpha$\}.}   is complete.

The following points can be easily verified.
\begin{enumerate}[(1)]
\item  $\mathscr{X}$  is invariant under $\mathscr{V}_\iota$.
This follows directly from assumption (ii)
and the properties of the projection $\Pro_{n_\iota}$, see also Remark \ref{projection}.
\item Due to the fact that $\Vcal^\iota_{\Afrak_{\mu},W}$ restricted to $\mathcal{X}(\Afrak_\mu)$ is uniformly continuous, $\mathscr{V}_\iota$ restricted to $\mathscr{X}$ is continuous on $\mathscr{M}_1(\mathbb{X}\cap\mathbb{U}) $ in the Prokhorov metric
by \cite[Theorem 11.7.1]{dudley2002}.
\item Note that by assumption (v), $\mathcal{V}^\iota_{\mathfrak{A}_\mu,W}(\xi)\in \mathbb{U}$ for $\xi\in\mathcal{X}(\mathfrak{A}_\mu)$. We claim that $\mathscr{V}_\iota$ restricted to $\mathscr{X}$ is compact on $\mathscr{M}_1(\mathbb{X}\cap\mathbb{U})$. 
In particular, it maps bounded sets into compact sets.
In fact, we have to show that for all $\iota\in\NN$ and $\ep>0$ there exists a compact subset $K_\ep\subset\mathbb{X}\cap\mathbb{U}$ such that
$$
\nu_ {\mathscr{V}_\iota(\xi)}\left ((\mathbb{X}\cap\mathbb{U})\setminus K_\ep\right)< \ep, \quad \forall \nu_\xi\in \mathscr{X}\quad \mbox{and}\quad \nu_ {\mathscr{V}_\iota(\xi)}:=\mathscr{V}_\iota(\nu_\xi).
$$
However, by assumption (iv)
there exists a constant $R>0$ with
$$
\EE\Vert\mathcal{V}^\iota_{\mathfrak{A}_\mu,W}(\xi)\Vert_{\mathbb{X}'}^{m_0}\le R,\quad \xi\in\mathcal{X}(\mathfrak{A}_\mu).
$$
Let $\tilde{R}>R^{1/{m_0}}\ep^{-1/{m_0}}$
and let $K_\ep:=\{x\in\mathbb{X}\cap\mathbb{U}:\Vert x\Vert_{\mathbb{X}'}\le \tilde{R}\}$.
Due to the construction of the operator $\mathscr{V}_\iota$, we have
that the law is preserved. In particular,
$$
\PP_\mu\left(  \left\{
x\in\mathbb{X}\cap\mathbb{U}\cap \mathcal{X}(\mathfrak{A}_\mu) :\Vert\mathcal{V}_{\mathfrak{A}_\mu,W}^\iota(x)\Vert_{\mathbb{X}'} \ge \tilde{R}\right\} \right)
=\nu_ {\mathscr{V}_\iota(\xi)}\left ( { \left\{
x\in\mathscr{X}:\Vert x\Vert_{\mathbb{X}'} \ge \tilde{R}\right\}}\right).
$$
Next, by Chebyshev's inequality, we get that
$$\nu_{ \mathscr{V}_\iota(\xi)}\left ((\mathbb{X}\cap\mathbb{U})\setminus K_\ep\right)
=
\nu_ {\mathscr{V}_\iota(\xi)}\left ({ \left\{
x\in\mathscr{X}:\Vert x\Vert_{\mathbb{X}'} \ge \tilde{R}\right\}}\right)
< \ep.$$
Since $\mathbb{X}'\hookrightarrow \mathbb{X}$ compactly,
we have proved the tightness.
\item $\mathscr{X}$ is a convex subset of $\mathscr{M}_1(\mathbb{X}\cap\mathbb{U})$.
Let $\nu,\mu\in\mathscr{X}$, we have to show that for any $\alpha\in(0,1)$ we have $\alpha \nu+(1-\alpha)\mu\in\mathscr{X}$. 
First, analyzing the expectation with respect to $\Psi_1,\ldots,\Psi_K$, this follows by the linearity of the expectation value. Secondly, since $\nu,\mu\in\mathscr{X}$
we know that $\nu\left(\{\Theta<\infty\}\right)=1$ and $\mu\left(\{\Theta<\infty\}\right)=1$, Let $\alpha\in(0,1)$. Then
\begin{align*}
&\left(\alpha \nu+(1-\alpha)\mu\right)\left(\{\Theta<\infty\}\right)
\\
&=\alpha \underbrace{\nu\left(\{\Theta<\infty\}\right)}_{=1}
+(1-\alpha)\underbrace{\mu\left(\{\Theta<\infty\}\right)}_{=1}=1.
\end{align*}
\end{enumerate}
In particular, the mapping $\mathscr{V}_\iota$ restricted to $\mathscr{X}$
satisfies all assumptions of the classical Schauder-Tychonoff theorem, see \cite[§ 7, Theorem 1.13,. p. 148]{granas}:
\begin{lemma}[Schauder-Tychonoff]
Let $\mathcal{C}$ be a nonempty convex subset of a locally convex linear topological space $\mathcal{E}$, and let $F:\mathcal{C}\to\mathcal{C}$ be a compact map, i.e., $F(\mathcal{C})$ is contained in a compact subset of $\mathcal{C}$. Then $F$ has a fixed point.
\end{lemma}

Hence, for any $\iota\in\NN$ there exists a probability measure $\nu^\ast_\iota\in\mathscr{X}$ such that $$\mathscr{V_\iota}(\nu^\ast_\iota)=\nu^\ast_\iota.$$

				\item
Note, that the tightness argument in the previous step is independent of $\iota$, thus the set
$$
\left\{ \nu^\ast_\iota:\iota\in\NN\right\}
$$				
is tight, therefore there exists a subsequence $\{\iota_j:j\in\NN\}$ and a Borel probability measure $\nu^\ast$ such that
$ \nu_{\iota_j}^\ast\to 	 \nu^\ast$, as $j\to\infty$.	
In this step, we will construct from the family of probability measures $\{\nu^\ast_{\iota_j}:j\in\NN\}$ and $\nu^\ast\in\mathscr{X}$, a filtered probability space $\mathfrak{A}^\ast$, a Wiener process
${W}^\ast$, a
progressively measurable process $w^\ast$, and
a family of progressively measurable processes  $\{ w_{\iota_j}:j\in\NN\}$ that
			are $\PP^\ast$-a.s. contained in $\mathbb{X}\cap\mathbb{U}$
	  over $\mathfrak{A}^\ast$ such that these objects have probability measures $\{\nu^\ast_{\iota_j}:j\in\NN\}$ and $\nu^\ast_{\iota_j}\in\mathscr{X}\cap\mathbb{U}$.
	
By the Skorokhod lemma \cite[Theorem 4.30]{Kallenberg},
there exists a probability space $\mathfrak{A}^\ast_0=(\Omega^\ast_0,\CF^{\ast}_0,\PP^\ast_0)$ and a  sequence of $\mathbb{X}$-valued random variables $\{ {w}^\ast_{\iota_j}:j\in\NN\}$
and ${w}^\ast_{\iota_j}$ where
the random variable $w^\ast_{\iota_j}:\Omega^\ast_0\to\mathbb{X}\cap\mathbb{U}$ has the  law $\nu^\ast_{\iota_j}$ in $\mathbb{X}\cap\mathbb{U}$.
In addition, by tightness and the Skorokhod lemma, we have
\begin{equation}\label{eq:PwwP}
{w}^\ast_{\iota_j}\to{w}^\ast\quad \mbox{ as $j\to \infty$ }\quad  {\P}^\ast_0\text{-a.s.}
\end{equation}
on $\X$.
Moreover, let us introduce the filtration $\mathbb{G}^\ast_0=(\mathcal{F}_t^{\ast,0})_{t\in[0,T]}$  given by
				$$
			 \CF^{\ast,0}_t:=\sigma \left( \left\{\,(w _{\iota_j}^\ast(s),w^\ast(s))\;\colon\; 0\le s\le t, \,j\in\NN \right\}\cup \CN_0^\ast\right),\quad t\in [0,T],
				$$
				where $\CN_0^\ast$ denotes the collection of zero sets of ${\mathfrak{A}}_0^\ast$.
				
Next, similarly as above, let us construct the Wiener process.
Let 
$$
\mathfrak{A}_1^\ast=\left(\Omega_1^\ast,\CF_1^\ast,(\mathcal{G}_t^{1,\ast})_{t\in [0,T]},\PP_1^\ast\right).
$$
be a filtered probability space with a cylindrical Wiener process $W^\ast$ on $H$ being adapted to the filtration $(\mathcal{G}_t^{1,\ast})_{t\in [0,T]}$. 				Let $\mathfrak{A}^\ast:=\mathfrak{A}_0^\ast\times \mathfrak{A}_1$.	
				In particular, we put
\begin{align*}
&\Omega ^\ast=  \Omega_0^\ast\times \Omega^\ast_1,
\\
&\CF^\ast = \CF_0^\ast\otimes \CF_1^\ast,
\\
&\CG^\ast_t = \CG^{0,\ast}_t\otimes \CG^{1,\ast}_t,\quad t\in [0,T],
\\
\mbox{and}\quad &\PP^\ast = \PP^\ast_0\otimes \PP_1^\ast.
\end{align*}
In addition, $\mathcal{X}(\mathfrak{A}^\ast)$ can be defined, and also the operators $\mathcal{V}^\iota_{\mathfrak{A}^\ast,W^\ast}$
and ${\mathcal{V}}_{\mathfrak{A}^\ast,W^\ast}$
				
				\item
				
				In this step, we mimic an explicit Euler scheme, to construct a $\P^\ast$-a.s. piecewise constant and $\{\CG^\ast_t\}_{t\in [0,T]}$-progressively measurable process that is a fixed point for the operator $\mathcal{V}_{\mathfrak{A}^\ast,W^\ast}^{\iota_j}$.
			
Since $\nu^\ast_{\iota_j}\in \mathscr{X}$, the process $w^\ast_{\iota_j}\in\mathcal{X}(\mathfrak{A}^\ast)$ and, hence, 
 $\mathcal{V}_{\mathfrak{A}^\ast,W^\ast}^{\iota_j}(w^\ast_{\iota_j})$ is well-defined.
 Since $\mathscr{V}_{\iota_j}(\nu^\ast_{\iota_j})=\nu^\ast_{\iota_j}$, $\Law(w^\ast_{\iota_j})=\nu^\ast_{\iota_j}$.
However, we do not know if
 the process $w^\ast_{\iota_j}$ satisfies
\begin{align*}
\PP^\ast\left(
\mathcal{V}_{\mathfrak{A}^\ast,W^\ast}^{\iota_j}\left(w^\ast_{\iota_j}\right) (s) =w^\ast_{\iota_j}(s)\right)=&1 \quad \mbox{for} \quad 0\le s\le T
.
\end{align*}

In this step, we will construct here a fixed point for the operator $\mathcal{V}_{\mathfrak{A}^\ast,W^\ast}^{\iota_j}$.
Let us define a new process by induction. To start with, let
\begin{equation}\label{nummer1}
w^\ast_{\iota_j,1}(s) :=  \begin{cases}
w^\ast_{\iota_j}(s)& \mbox{ if } 0\le s<t_1,
\\
\left(\mathcal{V}_{\mathfrak{A}^\ast,W^\ast}^{\iota_j}(w^\ast_{\iota_j})\right)(s)& \mbox{ if } t_1\le s\le T,
\end{cases}
\end{equation}
and
\begin{equation}\label{nummer2}
w^\ast_{\iota_j,2}(s) :=\begin{cases}
w^\ast_{\iota_j,1}(s)& \mbox{ if } 0\le s<t_2,
\\
\left(\mathcal{V}_{\mathfrak{A}^\ast,W^\ast}^{\iota_j}(w^\ast_{\iota_j,1})\right)(s)& \mbox{ if } t_2\le s\le T.
\end{cases}
\end{equation}
Now, having defined $w^\ast_{\iota_j,k}$, let
\begin{equation}\label{nummerk}
w^\ast_{\iota_j,k+1}(s) :=  \begin{cases}
w^\ast_{\iota_j,k}(s)& \mbox{ if } 0\le s<t_{k+1},
\\
\left(\mathcal{V}_{\mathfrak{A}^\ast,W^\ast}^{\iota_j}(w^\ast_{\iota_j,k})\right)(s)& \mbox{ if } t_{k+1}\le s\le T,
\end{cases}
\end{equation}
where $t_k\in\pi_n$ are dyadic time points.
Let us put $  w^\ast_{\iota_j,0}(s)=w_0^\ast$ for $0\le s\le T$, where $ w_0^\ast$ is a $\Gcal^\ast_0$-measurable version of $w_0$, and
\begin{equation}\label{definfty}
w_{\iota_j,\infty}^\ast(s) :=   w^\ast_{\iota_j,k}(s),\quad \mbox{if}\quad t^{\iota_j}_{k-1}\le s< t^{\iota_j}_{k},\,\,k=1,\ldots, 2^{\iota_j}.
\end{equation}
We claim that the process $w^\ast_{\iota_j,\infty}$ satisfies
\begin{equation}\label{isasolution}
\PP^\ast\left(
\mathcal{V}_{\mathfrak{A}^\ast,W^\ast}^{\iota_j}\left(w^\ast_{\iota_j,\infty}\right) (s) =w^\ast_{\iota_j,\infty }(s)\right)=1 \quad \mbox{for} \quad 0\le s\le T
.
\end{equation}
Note, that
by the definition of $\Pro_{\iota_j}$ on $[0,t_1^{\iota_j})$
the process on $[0,t_{1}^{\iota_j})$ is defined by the initial data.
In fact, we have
for $0\le s< t^{\iota_j}_1$
$$
\mathcal{V}_{\mathfrak{A}^\ast,W^\ast}^{\iota_j}\left(w^\ast_{\iota_j,\infty}\right)(s)= w^\ast_0.
$$
By equation \eqref{definfty} and equation \eqref{nummer1} we have $w^\ast_{\iota_j,\infty}=w^\ast_{\iota_j,1}=w^\ast_0$ 
for $0\le s< t^{\iota_j}_1$.
In particular, the process on $[0,t_{1}^{\iota_j})$ is defined by the initial data
and we have  $\PP^\ast$-a.s.
$$\mathcal{V}_{\mathfrak{A}^\ast,W^\ast}^{\iota_j}\left(w^\ast_{\iota_j,\infty}\right)(s)=w^\ast_{\iota_j,\infty}(s)
, \quad \mbox{for} \quad 0\le s<t^{\iota_j}_1.
$$
At time $t^{\iota_j}_1$, we have by equation \eqref{definfty}
and equation \eqref{nummer1}, 
$$
\mathcal{V}_{\mathfrak{A}^\ast,W^\ast}^{\iota_j}\left(w^\ast_{\iota_j,\infty }\right)(t^{\iota_j}_1)=\left(\mathcal{V}_{\mathfrak{A}^\ast,W^\ast}^{\iota_j}  w^\ast_{\iota_j}\right)(t^{\iota_j}_1)=w^\ast_{\iota_j,1}(t^{\iota_j}_1).
$$
However, we have $\tilde w^\ast_{\iota ,1}(t^{\iota_j} _1)= w^\ast_{{\iota_j },\infty}(t^{\iota_j}_1)$.
Let us analyze what is happening at the next time interval $[t^{\iota_j }_1,t^{\iota_j }_2)$. Her the process is constant and equals   $\PP^\ast$-a.s. the value at $t^{\iota_j }_1$, i.e.
$$
\mathcal{V}_{\mathfrak{A}^\ast,W^\ast}^{\iota_j}\left( w^\ast_{{\iota_j },\infty}\right)(s)
=w^\ast_{\iota_j ,1}(s),  \quad \mbox{for} \quad t^{\iota_j}_1\le s<t^{\iota_j}_2.
$$
Note, also that   $\PP^\ast$-a.s. we have $w^\ast_{\iota_j ,1}(s)=w^\ast_{\iota_j ,\infty}(s)$ for $ t^{\iota_j}_1\le s<t^{\iota_j}_2$, and, hence
$$
\PP^\ast\left( w^\ast_{\iota_j ,\infty }(s)= \mathcal{V}_{\iota_j }\left(w^\ast_{\iota_j ,\infty }\right)(s)\right) =1,  \quad \mbox{for} \quad t^{\iota_j}_1\le s<t^{\iota_j}_2.
$$
Let us analyze what happens in $t^{\iota_j}_2$. By equation \eqref{definfty}, we have
$$
\mathcal{V}_{\mathfrak{A}^\ast,W^\ast}^{\iota_j}\left(w^\ast_{\iota_j,\infty}\right)(t^{\iota_j}_2)=\mathcal{V}_{\mathfrak{A}^\ast,W^\ast}^{\iota_j}\left(w^\ast_{\iota_j,1}\right)(t^{\iota_j}_2).
$$
By equation of $ w^\ast_{\iota_j,2}$, i.e. \eqref{nummer2},  we have 
$$
\mathcal{V}_{\mathfrak{A}^\ast,W^\ast}^{\iota_j}\left(w^\ast_{\iota_j,\infty}\right)(t^{\iota_j}_2)=w^\ast_{\iota_j,2}(t^{\iota_j}_2).
$$
Now, we can proceed by induction. Let us assume that in $[0,t^{\iota_j}_k)$ we have shown that 
\begin{equation}\label{inductionstart}
\PP^\ast\left(
\mathcal{V}_{\mathfrak{A}^\ast,W^\ast}^{\iota_j}\left(w^\ast_{\iota_j,\infty }\right) (s) =w^\ast_{\iota_j,\infty}(s)\right)=1 \quad \mbox{for} \quad 0\le s\le t^{\iota_j}_k
.
\end{equation}
Then,  we have by equation \eqref{definfty} and equation \eqref{nummerk} we have for $t^{\iota_j}_k\le s< t^{\iota_j}_{k+1}$
$$
\mathcal{V}_{\mathfrak{A}^\ast,W^\ast}^{\iota_j}\left(w^\ast_{\iota_j,\infty}\right) (s) =\mathcal{V}_{\mathfrak{A}^\ast,W^\ast}^{\iota_j}\left(w^\ast_{\iota_j,k-1}\right)(s)=w^\ast_{\iota_j,k}(s).
$$

\item

Next, we verify a couple of statements with the goal to pass on to the limit. Here, we point out that the same construction as done for $w^\ast_{\iota_j,\infty}$ can be done on the initially given probability space $\mathfrak{A}$.
The resulting process is denoted by $w_{\iota_j,\infty}$.
 Due to the construction and by the properties of the projection, it is easy to see that the laws are preserved. In particular, $\Law(w_{\iota_j,\infty})=\Law(w^\ast_{\iota_j,\infty})$.
 \begin{claim}\label{claim222}
 We claim that
\begin{enumerate}[(1)]
\item there exists a constant $C>0$ such that $ \sup_{j\in{\mathbb{N}}}  \EE^\ast \left[ \Vert w^\ast_{\iota_j,\infty}
\Vert^{m_0}_{\mathbb{X}}\right]\le C$ and
\item  for any $r\in (1,m_0)$ we have
$$\lim_{j\to \infty}{{\mathbb{E}^\ast }}\left[   \Vert w^\ast_{\iota_j,\infty}- w^{\ast} \Vert_{\mathbb{X}}^r\right] = 0. $$
\end{enumerate}
\end{claim}
\begin{proof}[Proof of Claim \ref{claim222}:]
Clearly, since $\{  w^\ast_{\iota_j,\infty}\}_{j\in\NN}\subset \mathcal{X}( \MA^\ast)$, and $\mathcal{X}( \MA^\ast)$ is bounded in $\X$,
 we can conclude from the application of the Skorokhod lemma that
\[
\Eb\left\Vert w_{\iota_j,\infty}\right\Vert_{\X}^{r}={\Eb}^\ast \Vert{w}_{\iota_j,\infty}\Vert_{\X}^{r},
\]
for any $r\in [1,m_0]$,
so that we get by assumption (iv) that
\[
\sup_{j}{\Eb}^\ast \Vert w^\ast_{\iota_j,\infty}\Vert_{\X}^{m_0}\le R\tilde{C}=:C,
\]
where $\tilde{C}>0$ is a constant such that $\Vert\cdot\Vert_{\X}\le \tilde{C}\Vert\cdot\Vert_{\X^\prime}$ which exists by the compact and dense embedding $\mathbb{X}'\hookrightarrow \mathbb{X}$.

Hence, we know that $\{\Vert w^\ast_{\iota_j,\infty}\Vert_{\X}^{r}\}$
is uniformly integrable for any $r\in (1,m_0]$ w.r.t. the probability measure $\P^\ast$.
By \eqref{eq:PwwP}, there exists $w^\ast\in \mathcal{X}(\Afrak^\ast)$ with ${w}^\ast_{\iota_j,\infty}\to{w}^\ast$ ${\P}^\ast$-a.s., so
we get by the Vitali convergence theorem that
\begin{equation}
\lim_{j\to\infty}{\Eb}^\ast\left\Vert w^\ast _{\iota_j,\infty}-{w}^\ast\right\Vert_{\X}^{r}=0,\label{eq:strong-v}
\end{equation}
for any $r\in(1,m_0)$.
\end{proof}

\item
In this step we show that $w^\ast$ over $\MA^\ast$ together with the Wiener process $W^\ast$  is indeed a martingale solution to \eqref{spdes}.
We shall use an $\ep/3$-argument and expand
\begin{align*}
w^\ast-\mathcal{V}_{\mathfrak{A}^\ast,W^\ast}(w^\ast)
=&\underbrace{w^\ast-w_{\iota_j,\infty}^\ast}_{:=I}
+
\underbrace{ w_{\iota_j,\infty}^\ast-\mathcal{V}_{\mathfrak{A}^\ast,W^\ast}^{\iota_j}( w_{\iota_j,\infty}^\ast)}_{=:II}\\
&+
\underbrace{\mathcal{V}_{\mathfrak{A}^\ast,W^\ast}^{\iota_j}( w_{\iota_j,\infty}^\ast)-\mathcal{V}_{\mathfrak{A}^\ast,W^\ast}^{\iota_j}(w^\ast)}_{=:III}
+
\underbrace{\mathcal{V}_{\mathfrak{A}^\ast,W^\ast}^{\iota_j}(w^\ast)-\mathcal{V}_{\mathfrak{A}^\ast,W^\ast}(w^\ast)}_{=:IV}.
\end{align*}
Now, we analyze the terms $I$, $II$, $III$, and $IV$ separately.

Due to the convergence, we know that for any $r\in (1,m_0)$, $\ep>0$, there exists $j_0\in\N$,
$$\EE^\ast\Vert w^\ast-w^\ast_{\iota_j}\Vert_{\mathbb{X}}^r\le \frac \ep 3\quad\text{for all }j\ge j_0.
$$
Next, to tackle $II$, we know due to the well-posedness by the existence of fixed point in the step before,
$$
\mathcal{V}_{\mathfrak{A}^\ast,W^\ast}^{\iota_j}( w^\ast_{\iota_j,\infty})=w^\ast_{\iota_j,\infty}.
$$
To tackle $III$, due to the uniform continuity of the operator
$\mathcal{V}_{\mathfrak{A}^\ast,W^\ast}^{\iota_j}$, we know that there exists a $\delta=\delta(\ep)>0$ and $j_0\in\N$, such that, for any $r\in (1,m_0)$,
$$
\EE^\ast \left\Vert \mathcal{V}_{\mathfrak{A}^\ast,W^\ast}^{\iota_j}( w^\ast_{\iota_j,\infty})-\mathcal{V}_{\mathfrak{A}^\ast,W^\ast}^{\iota_j}(w^\ast)\right\Vert_{\mathbb{X}}^r
\le \frac{\ep}{3},
$$
whenever $j\ge j_0$, and
\[{\Eb}^\ast\left\Vert w^\ast _{\iota_j,\infty}-{w}^\ast\right\Vert_{\X}^{r}<\delta.\]
Finally, since $\mathcal{V}_{\mathfrak{A}^\ast,W^\ast}^{\iota_j}= \Pro_{n_{\iota_j}}\circ{\mathcal{V}_{\mathfrak{A}^\ast,W^\ast}}$, for $r\in (1,m)$, the difference
$$
\EE^\ast  \left\Vert\mathcal{V}_{\mathfrak{A}^\ast,W^\ast}^{\iota_j}(w^\ast)-\mathcal{V}_{\mathfrak{A}^\ast,W^\ast}(w^\ast)\right\Vert_{\mathbb{X}}^r
$$
tends to zero by uniform continuity, see \eqref{eq:pointwiseeps}. However, $m\le m_0$ by assumption (iv).
Thus, $IV$ tends to zero in $L^{r}(\Omega^\ast,\Fcal^\ast,\P^\ast;\X)$ for any $r\in (1,m)$.

As a consequence, we have
\[{\Vcal}_{\mathfrak{A}^\ast,W^\ast}({w}^\ast)={w}^\ast,\quad {\P}^\ast \text{-a.s.}\]
As seen above, ${w}^\ast\in\Xcal({\Afrak}^\ast)$,
so that by (v), ${\Vcal}_{\mathfrak{A}^\ast,W^\ast} ({w}^\ast)\in\D([0,T];U)$, and
therefore ${w}^\ast\in\D([0,T];U)$ ${\P^\ast}$-a.s. Hence for
all $t\in[0,T]$, ${\P^\ast}$-a.s.
\[
{\Vcal}({w}^\ast)(t)={w}^\ast(t)
\]
 and the proof is complete. By construction, we see that $w^\ast$ solves
 \begin{equation}\label{spdes_infact}
d w^\ast(t) =\left(\DeltaA  w^\ast(t)+ F( w^\ast,t)\right)\, dt +\Sigma( w^\ast(t))\,d{W}^\ast(t),\quad  w^\ast(0)= w^\ast_0,
\end{equation}
on $\Afrak^\ast$, where $ w_0^\ast$ is a $\Gcal^\ast_0$-measurable version of $w_0$.
\end{step}
\end{proof}

\section{Results on regularity and technical propositions}\label{technics}

This section contains the remaining results which are used in the proof of the main result Theorem \ref{mainresult}.

\subsection{Assumptions on the noise and consequences}\label{subsec:noise}

Let us recall, we denoted by $\{\psi_k:k\in\NN\}$ the eigenfunctions of the Laplace operator $-\Delta$ in $L^2(\CO)$ and by  $\{\nu_k:k\in\NN\}$ the corresponding eigenvalues,
where the enumeration is chosen in increasing order counting the multiplicity.

Let us characterize the asymptotic behavior of  $\{\nu_k:k\in\NN\}$  and $\{\psi_k:k\in\NN\}$ for an arbitrary domain $\CO$ with $C^2$--boundary. Here, we know by Weyl's law \cite{Weyl1911,Weyl1912} that there exist constants $c,C>0$ such that,
\begin{align} \label{abs_ev}
\# \{ j\;\colon\; \nu_j\le \lambda \} \le C \lambda^ \frac d2,
\end{align} 
compare with \cite{LiYau,Hoermander} and \cite[Corollary 17.8.5]{HoermanderIII}, and there exists a constant $C>0$ such that,
\begin{align} \label{abs_infty}
\sup_{x\in\CO}\vert\psi_k(x)\vert \le& C \nu_k^\frac {d-1}2, \quad k\in\NN,
\end{align} 
compare with \cite{grieser}.

\begin{remark}
If $\CO=[0,1]^d$ is a rectangular domain, then
a complete orthonormal system $\{ \psi_k:k\in\mathbb{Z} \}$ of the underlying Lebesgue space $L^2(\CO)$ is given by the trigonometric functions, see \cite[p.\ 352]{Garrett1989} (now, $\mathbb{Z}$ obviously denotes the integer numbers). Let us define firstly the eigenvalues for the Laplace on $L^2([0,1])$
\begin{equation}\label{ONS1}
e_k(x)=
\begin{cases}
{\sqrt{2}} \, \sin\big(2\pi{k} x\big) &\!\!\text{if } k\geq 1,\,x\in\CO, \\
{1} &\!\!\text{if } k = 0,\, x\in\CO\,, \\
{\sqrt{2}}\, \cos\big(2 \pi{k} x\big) &\!\!\text{if } k \leq - 1, x\in\CO.
\end{cases}
\end{equation}
For instance, the  eigenfunctions for $L^2([0,1]^2)$ for the multi-index $k=(k_1,k_2)\in \mathbb{Z}^2$   are given by the tensor product
$$
\psi_{k}(x_1,x_2)= e_{k_1}(x_1)e_{k_2}(x_2),\quad x\in [0,1]^2,
$$
the corresponding eigenvalues are given by $\nu_k=\pi^2\vert k\vert^2$, where $\vert k\vert=k_1+k_2$. The case $d=3$ works analogously.
In this special case the conditions on $\delta_1$ and $\delta_2$ in Hypothesis \ref{wiener} can be relaxed to
$$ \lambda_k^{(1)}\le C \nu_k^{-\delta_1},\quad \lambda^{(2)}_k\le C\nu_k^{-\delta_2},
\quad k=(k_1,k_2)
$$
to $\delta_1,\delta_2>\frac 12$. See also the discussion in \cite[Examples 1.2.1 and 2.1.2]{BDPR2016}.
\end{remark}

We begin with a remark on the noise coefficients. We partly work in the Banach spaces $L^m(\CO)$ and $H^\delta_m(\CO)$ for $\delta$ being arbitrary small but positive.

Given a Wiener process $W$ on $H=L^2(\CO)$  over $\MA=(\Omega,\Fcal,\mathbb{F},\PP)$, and a progressively measurable process $\xi\in\mathcal{M}^2_\MA(L^2(\CO))$,
$\rho\in[0,\frac 12]$,
let us define $\{Y(t):t\in[0,T]\}$ by
 $$
 Y(t):=\int_0^ t\sigma( \xi(s))\, d {W}(s), \quad t\in[0,T].
 $$
Let $E$ be a function space, specified later. Here, for each $\xi\in E$, $\sigma (\xi)$ is interpreted as a multiplication operator acting on the
elements of $H$, namely,
 $\sigma: \xi\mapsto \xi\psi\in \CS'(\Ocal)$, where $\CS'(\Ocal)$ denotes the space of \emph{tempered distributions} on $\Ocal$.
Let $E$ be a Banach space\footnote{For Banach spaces of martingale type $2$ we refer to \cite{vanneerven,brzezniak2}} of martingale type $2$.
An inequality needed in several places within the proof is the Burkholder-Davis-Gundy inequality given for $p\ge 1$ as follows
\begin{equation*}\label{BDG1}
\EE \left[\sup_{t \in [0, T]} \vert Y(t)\vert^p_{E}\right] \leq C(p) \, \EE \,\left[ \int_0^T \vert\xi(t)\vert^2_{\gamma(H,E )}\, dt\right]^\frac p2,
\end{equation*}
where $\gamma(H,E)$ denotes the space of $\gamma$-radonifying operators and $\vert\cdot\vert_{\gamma(H,E)}$ the corresponding norm, cf. \cite{brzezniak2,vanneerven}. In case $E$ is a Hilbert space, the $\gamma$-norm coincides with the Hilbert-Schmidt norm $\vert\cdot\vert_{L_{\textup{HS}}(H,E)}$. See Appendix \ref{sec:BDG} for further details.

Since $\vert\xi\psi_k\vert_{L^m}\le \vert\xi\vert_{L^m}\, \vert\psi_k\vert_{L^\infty}$ and \eqref{abs_infty},
we know that
\begin{align*} 
 \vert\xi\vert^2_{\gamma (H, L^m )}\le \sum_{k\in\NN} \vert\lambda^{(j)}_k\vert^2 \vert\xi\psi_k\vert_{L^m}^2\le \vert\xi\vert_{L^m} ^2 \sum_{k\in\NN}\nu_k^{-2\delta_j} \vert\psi_k\vert_{L^\infty}^2
 \le C  \vert\xi\vert_{L^m} ^2 \sum_{k\in\NN} \nu_k^{(d-1)-2\delta_j},
 \end{align*} 
which is finite if $\delta_j>\frac d2-\frac 14$.

Next, let $E=H^{-1}_2(\CO)$. Since
$$\vert\xi\psi\vert_{H^{-1}_2}\le  \vert\xi\vert_{H^{-1}_2}\vert\psi\vert_{H^\delta_2}\le  \vert\xi\vert_{H^{\rho}_2}\vert\psi\vert_{L^\infty},$$
we know that $\delta>\frac 12$, then
$$ \vert\sigma_1 (\xi)\vert_{\gamma(H,H^{\rho}_2)}\le C\, \vert\xi\vert_{H^{\rho}_2}.
$$
Furthermore,
let $E=H^{\rho}_2(\CO)$, where $\rho$ is given in Hypothesis \ref{init}. Since
$$\vert\xi\psi\vert_{H^{\rho}_2}\le  \vert\xi\vert_{H^{\rho}_2}\vert\psi\vert_{H^{2\vert\rho\vert}_2}\le  \vert\xi\vert_{H^{\rho}_2}\vert\psi\vert_{L^\infty},$$
we know that $\delta>\frac 12$, then
$$ \vert\sigma_2 (\xi)\vert_{\gamma(H,H^{\rho}_2)}\le C\, \vert\xi\vert_{H^{\rho}_2}.
$$

\begin{remark}\label{LSSigma}
From Hypotheses \ref{wiener},
one can infer that
there exists a constant $C>0$ such that
\[\sum_{k=1}^\infty [(\sigma_j (\xi) f_k)(x)]^2\le C\vert\xi(x)\vert_{L^2}^2,\quad \forall \xi\in L^2(\CO),\;x\in\Ocal,\mbox{ and } j=1,2.\]
Here $\{f_k\}$ is an orthonormal basis in $H^{-1}_2(\CO)$, compare with \cite[Hypothesis 3, p.\ 42]{BDPR2016}.
In addition,
note that
\begin{itemize}
\item Firstly, $\sigma_j:H^{-1}_2(\CO)\to L_{\textup{HS}}(H_j,H^{-1}_2(\CO))$
is of linear growth and Lipschitz continuous.
In particular, there exists constants $C_1,L_1>0$ such that
\begin{align*} 
\vert\sigma_j(\xi)\vert_{L_{\textup{HS}}(H_j,H^{-1}_2)} \le& C_1(1+\vert\xi\vert_{H^{-1}_2}),\quad \xi\in H^{-1}_2(\CO),
\\ \vert\sigma_j(\xi)-\sigma_j(\eta)\vert_{L_{\textup{HS}}(H_j,H^{-1}_2)}\le& L_1 \vert\xi-\eta\vert_{H^{-1}_2}, \quad  \xi,\eta\in H^{-1}_2(\CO).
\end{align*} 

\item Secondly, $\sigma_j:L^2(\CO)\to L_{\textup{HS}}(H_j,L^2(\CO))$
is of linear growth and Lipschitz continuous.
In particular, there exists constants $C_2,L_2>0$ such that
\begin{align*} 
\vert\sigma_j(\xi)\vert_{L_{\textup{HS}}(H_j,L^2)} \le& C_2(1+\vert\eta\vert_{L^2}),\quad \eta\in L^2(\CO),
\\ \vert\sigma_j(\xi)-\sigma_j(\eta)\vert_{L_{\textup{HS}}(H_j,L^2)}\le& L_2 \vert\xi-\eta\vert_{L^2}, \quad  \xi,\eta\in L^2(\CO).
\end{align*} 
\item
Similarly, straightforward computations and using the fact that $\vert f_k\vert_{L^\infty}\le \nu_k^\frac {d-1}2 $, see \eqref{abs_infty} and \cite[p.\ 46]{BDPR2016}, we get that
\begin{align*} 
\vert\sigma_2(\xi)\vert_{\gamma(H_2,L^m)} \le& C_1(1+\vert\xi\vert_{L^m}),\quad \xi\in L^m(\CO),
\\ \vert\sigma_2(\xi)-\sigma_2(\eta)\vert_{{\gamma}(H_2,L^m)}\le& L_1 \vert\xi-\eta\vert_{L^m}, \quad  \xi,\eta\in L^m(\CO),
\end{align*} 
where $\gamma(H_2,L^m)$ denotes the space of $\gamma$-radonifying operators, cf. \cite{brzezniak2,vanneerven}.
\end{itemize}
\end{remark}

\subsection{Properties of equation \eqref{eq:cutoffuu}}
In this subsection, we are analyzing equation \eqref{eq:cutoffuu}, where $\kappa\in\N$ and the couple  $(\xi,\eta)\in\subX\subset\CM_\MA^{2,m_0}(\BY,\mathbb{Z})$ are given. The constants $\Reins,\Rzwei$ and $\Rdrei$ are given as in \eqref{constantR1},  \eqref{constantR2}, and  \eqref{constantR3}, respectively.

First, we will show that a unique solution $u_\kappa$ to the system \eqref{eq:cutoffuu} exists and is nonnegative. Second, we will show by variational methods that this solution satisfies some integrability properties, given $u_0\in L^ {\pp}(\Ocal)$, $p\ge 1$. Let us start with proving an inequality that we are going to use in the sequel.

\begin{lemma}\label{lem:useful}
Assume that the Hypotheses of Theorem \ref{mainresult} hold.
For any $\eps_1,\eps_2>0$ and any $(\xi,\eta)\in\subX\subset\CM_\MA^{2,m_0}(\BY,\mathbb{Z})$, $u,w\in L^{\gamma+1}(\Ocal)$, $\kappa\in\N$, there exists a constant $C=C(\kappa,\chi,\eps_1,\eps_2)>0$ such that
\[\chi\left\vert\int_{\Ocal}(-\Delta)^{-1}(\phi_\kappa(h(\eta,\xi,t))\xi^{2}(t,x)\, u)w\,dx\right\vert\le C\left(1+\vert u\vert_{H_2^{-1}}^2\right)+\eps_1\vert u\vert_{L^{\gamma+1}}^{\gamma+1}+\eps_2\vert w\vert_{L^{\gamma+1}}^{\gamma+1}.\]
\end{lemma}
\begin{proof}
By H\"{o}lder's inequality,
we have for fixed $\omega\in\Omega$ and fixed $t\in [0,T]$,
\begin{align*} 
& \left\vert\int_{\Ocal}(-\Delta)^{-1}(\phi_\kappa(h(\eta,\xi,t))\xi^{2}(t,x)\, u)w\,dx\right\vert
\\\nonumber
\le&
\vert(-\Delta)^{-1}(\phi_\kappa(h(\eta,\xi,t))\xi^{2}(t)u)\vert_{L^{(\gamma+1)/\gamma}}\vert w\vert_{L^{\gamma+1}}
\\\nonumber
\le&
\vert\phi_\kappa(h(\eta,\xi,t))\xi^{2}(t)u\vert_{H^{-2}_\frac {\gamma+1}\gamma }   \vert w\vert_{L^{\gamma+1}}
\end{align*} 
Now, pick $0>s_1>-\frac{3}{\gamma+1}$ with small modulus and $s_2=\frac{3}{\gamma+1}>-s_1$ and note that $s_2\ge s_1$, $s_1+s_2\le \frac{d\gamma}{\gamma+1}$, $s_1+s_2-\frac{d\gamma}{\gamma+1}\ge -2$, and thus we can apply Proposition \ref{prop:runst190}.
Then we know
\begin{align*} 
\vert\phi_\kappa(h(\eta,\xi,t))\xi^{2}(t)u)\vert_{H^{-2}_\frac {\gamma+1}\gamma}\le C\vert\phi_\kappa(h(\eta,\xi,t))\xi^{2}(t)\vert_{H^{s_2}_{\frac {\gamma+1}\gamma}}
\vert u\vert_{H^{s_1}_{\frac {\gamma+1}\gamma}}
.\end{align*} 
Let $l\ge 2$
such that
$$
\frac dl =\frac 12\left(\frac {d\gamma}{\gamma+1}- s_2\right)=\frac{d\gamma}{2(\gamma+1)}- \frac 3{2(\gamma+1)}. $$
Then, we know that (see \cite[p. 364]{runst}) there exists $c>0$ with
$$
\vert\phi_\kappa(h(\eta,\xi,t))\xi^{2}(t)\vert_{H^{s_2}_{\frac {\gamma+1}\gamma}}
\le c
\vert(\phi_\kappa(h(\eta,\xi,t)))^{1/2}\xi(t)\vert^2_{H^{s_2}_{l}}.
$$
Next, if $\delta\ge \frac d2 -\frac dl+s_2=\frac{d}{2}+\frac{d\gamma}{2(\gamma+1)}+ \frac{3}{2(\gamma+1)}$ we get
$$
\vert\phi_\kappa(h(\eta,\xi,t))\xi^{2}(t)\vert_{H^{s_2}_{\frac {\gamma+1}\gamma}}
\le c
\vert(\phi_\kappa(h(\eta,\xi,t)))^{1/2}\xi(t)\vert^2_{H^{\delta}_{2}}.
$$
Applying the Young inequality we know for any $\ep_2>0$ there exists a constant $C(\chi,\ep_2)>0$ such that
\begin{align*}  
\lqq{ \chi\left\vert\int_{\Ocal}(-\Delta)^{-1}(\phi_\kappa(h(\eta,\xi,t))\xi^{2}(t,x)\, u)w\,dx\right\vert}
\\\nonumber
\le&
C(\chi,\ep_2)\vert(\phi_\kappa(h(\eta,\xi,t)))^{1/2}\xi(t)\vert^{2\frac {\gamma+1}\gamma}_{_{H^{\delta}_{2}}}  \vert u\vert^{\frac {\gamma+1}\gamma}_{H^{s_1}_{\frac {\gamma+1}\gamma}}+\ep_2 \vert w\vert^{\gamma+1}_{L^{\gamma+1}}
\end{align*} 
Let us fix $r\in (2,\gamma+1)$, such that
$$
\frac 1r=\frac \theta\infty+\frac{1-\theta}{\gamma+1},\quad s_1=\theta(-1)+(1-\theta)0\quad\Rightarrow\quad \frac 1r=\frac {1+s_1}{\gamma+1}.
$$
Thus for any $\ep,\ep_2>0$, there exists a constant $C(\chi,\ep,\ep_2)>0$
\begin{align*}  
&\qquad\lqq{ \chi \left\vert\int_{\Ocal}(-\Delta)^{-1}(\phi_\kappa(h(\eta,\xi,t))\xi^{2}(t,x)\, u)w\,dx\right\vert}
\\\nonumber
\le&
C(\chi,\ep,\ep_2)
\vert(\phi_\kappa(h(\eta,\xi,t)))^{1/2}\xi(t)\vert^{\frac {2r(\gamma+1)}{r\gamma-(\gamma+1)}}_{H^{\delta}_{2}} +\ep \vert u\vert^{r}_{H^{s_1}_{r}}+\ep_2 \vert w\vert^{\gamma+1}_{L^{\gamma+1}}
\end{align*} 
To apply the the cut-off function $\phi_\kappa^{1/2}$, we need
$$
\delta=\rho+(1-\theta),\quad \frac {r\gamma-(\gamma+1)} {2r(\gamma+1)}=\frac \theta\infty+\frac {1-\theta}{2}.
$$
In fact, if
$$\rho>\frac {15+3\gamma-16\gamma^2+8d(1+\gamma)}{16(1+\gamma)^2}
$$
then,
$$\Vert \xi\Vert_{L^{\frac {2r(\gamma+1)}{r\gamma-(\gamma+1)}}(0,T;{H^{\delta}_{2}})}\le c
\Vert\xi\Vert_{\BH},
$$
 and
$$
\Vert \xi\Vert_{L^{\frac {2r(\gamma+1)}{r\gamma-(\gamma+1)}}(0,T;{H^{\delta}_{2}})}\le C(\kappa).
$$
Summarizing the calculations above
we know for any $\ep,\ep_2>0$ there exists a constant $C(\kappa,\chi,\ep,\ep_2)>0$ such that
\begin{align}\label{summ}
   &  \chi\left\vert\int_{\Ocal}(-\Delta)^{-1}(\phi_\kappa(h(\eta,\xi,t))\xi^{2}(t,x)\, u)w\,dx\right\vert
  \nonumber
    \\
  & C(\kappa,\chi,\ep,\ep_2)
+\ep \vert u\vert^{r}_{H^{s_1}_{r}}+\ep_2 \vert w\vert^{\gamma+1}_{L^{\gamma+1}}
.
\end{align}
In addition, by interpolation, see e.g. \cite{bergh}, we get that for any $\ep_1>0$, there exists $\eps>0$ and a constant $C(\ep_1,\eps)>0$ such that
\[ \ep\vert u\vert^{r}_{H^{s_1}_{r}}\le \ep_1\vert u\vert^{\gamma+1}_{L^{\gamma+1}} +C(\ep_1,\eps)\vert u\vert_{H^{-1}_2}^2.\]
Setting $C:=C(\ep_1,\eps)\vee C(\kappa,\chi,\ep,\ep_2)$ completes the proof.
\end{proof}

\begin{theorem}\label{theou1}
Assume that the Hypotheses of Theorem \ref{mainresult} hold.
For any $(\xi,\eta)\in\subX\subset\CM_\MA^{2,m_0}(\BY,\BX)$, and
for any $u_{0}\in L^{2}(\Omega;\Fcal_{0},\P;H^{-1}_2(\CO))$ the system
\begin{equation}
d\uk(t)=r_u\Delta \uk ^{[\gamma]}(t)\, dt - \chi\phi_\kk (h(\eta,\xi,t))\, \uk\,\xi^2(t)\,dt +\sigma_1 \uk (t)\,dW_1(t),
\label{eq:u-spde}
\end{equation}
with $\uk(0)=u_0$ has a unique solution $\uk$ on the time interval $[0,T]$ which is $\P$-a.s. continuous in $H_2^{-1}$ and satisfies
$$\EE \left[\sup_{0\le s\le T} \vert\uk(s)\vert^2_{H^{-1}_2}\right]+\EE \int_0^T\vert\uk(s)\vert_{L^{\gamma+1}}^{\gamma+1}\, ds
 <\infty.
$$
In particular, there exists a constant $C(T,\kk)>0$ such that
$$\EE\Vert\uk\Vert^2_{\BY}\le C(T,\kk).
$$
\end{theorem}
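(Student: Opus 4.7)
The plan is to cast \eqref{eq:u-spde} as an SPDE of variational type and invoke the Krylov--Rozovskii/Pardoux theorem in the form presented in \cite[Theorem 4.2.4]{weiroeckner}. Introduce the Gelfand triple
\[
V := L^{\gamma+1}(\CO) \hookrightarrow \CH := H^{-1}_2(\CO) \hookrightarrow V^\ast,
\]
where the first embedding is continuous and dense because $\CO$ is bounded and $d\le 3$ (so that $L^{\gamma+1} \hookrightarrow L^2 \hookrightarrow H^{-1}_2$ for $\gamma>2$), and $V^\ast$ is the completion of $\CH$ in the dual norm induced by the $H^{-1}_2$ pairing. This is exactly the framework used for the porous medium operator in \cite{BDPR2016} and \cite[Example 4.1.11]{weiroeckner}.

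First, I would verify the variational conditions for the drift $A(u) := r_u \Delta u^{[\gamma]}$ as a map $V \to V^\ast$: hemicontinuity, strict monotonicity, coercivity of the form $\langle A(u), u \rangle_{V^\ast,V} \le -c|u|_V^{\gamma+1}$, and polynomial boundedness. These are classical and verified verbatim in the references above.

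Second, I would control the forcing term $B(t) := -\chi\,\phi_\kk(h(\eta,\xi,t))\,\eta(t)\,\xi^2(t)$. The cutoff is supported on $\{h(\eta,\xi,t) \le 2\kk\}$, which yields the deterministic bounds
\[
\int_0^T |\eta(s)|_{L^{\gamma+1}}^{\gamma+1}\,ds \le (2\kk)^{1/\nu},\qquad \int_0^T |\xi(s)|_{L^m}^{m_0}\,ds \le (2\kk)^{1/\nu}
\]
on the support of $\phi_\kk(h(\eta,\xi,\cdot))$. Combining Hölder's inequality in space (taking advantage of $m_0 > 2(\gamma+1)/\gamma$ from Hypothesis \ref{init}) with the Sobolev embedding $L^{6/5}(\CO) \hookrightarrow H^{-1}_2(\CO) \hookrightarrow V^\ast$ (valid for $d\le 3$), I would obtain a $\kk$-dependent estimate showing $B \in L^2(\Omega\times [0,T]; V^\ast)$, with a bound that depends on $\kk$, $\Reins$ and $\Rzwei$ through the membership $(\eta,\xi)\in\subX$.

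Third, by Remark \ref{LSSigma}, the noise coefficient $u \mapsto \sigma_1 u$ is linear, of linear growth and Lipschitz continuous from $\CH$ into $L_{\textup{HS}}(H_1,\CH)$, hence the classical hypothesis on the diffusion is satisfied. Invoking the variational existence-and-uniqueness theorem then produces the unique solution $\uk$ with
\[
\uk \in L^{\gamma+1}(\Omega\times[0,T]; V) \cap L^2(\Omega; C([0,T]; \CH)).
\]
To conclude the asserted bound $\EE\|\uk\|_{\BY}^2 \le C(T,\kk)$, I would apply the Itô formula for $|\uk(t)|_{H^{-1}_2}^2$ provided by the variational setting; the coercivity of $A$ delivers the $L^{\gamma+1}$-integrability in space-time, Young's inequality absorbs the $\langle B, \uk\rangle$ cross-term into the coercive part plus a $\kk$-dependent remainder, the Burkholder-Davis-Gundy inequality handles the martingale term (using the linear growth of $\sigma_1$), and a standard Gronwall argument closes the estimate. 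Combining the resulting $\EE\sup_t|\uk(t)|_{H^{-1}_2}^2$ bound with the space-time $L^{\gamma+1}$ bound (using Jensen's inequality for the exponent $2/(\gamma+1)<1$) gives the $\BY$-estimate.

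The main obstacle is the second step: one must verify that the chosen interpolation exponents from Hypothesis \ref{init} are compatible, i.e. that $\eta\xi^2$ can be interpreted as a $V^\ast$-valued process with a square-integrable (in $(\omega,t)$) norm under the cutoff. The rest of the argument follows a well-established variational pattern for porous-medium SPDEs with a globally Lipschitz noise coefficient.
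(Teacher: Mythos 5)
Your plan is essentially the paper's plan: cast \eqref{eq:u-spde} in the Gelfand triple $V=L^{\gamma+1}(\CO)\subset\Hcal=H^{-1}_2(\CO)\equiv\Hcal^\ast\subset V^\ast$, verify the classical variational hypotheses for the porous-medium drift, absorb the frozen nonlinearity through the cutoff bound $(2\kk)^{1/\nu}$, and invoke the Liu--R\"ockner existence theorem followed by the It\^o formula for $|u|^2_{\Hcal}$. The paper uses \cite[Theorem 5.1.3]{weiroeckner} and folds $-\chi\phi_\kk(h)\eta\xi^2$ directly into the drift operator $A$, verifying (H1), (H2$'$), (H3), (H4$'$) with $f\in L^1([0,T]\times\Omega)$; you prefer to treat it as an additive forcing $B$. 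This is a cosmetic difference.

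There is, however, a concrete flaw in the way you propose to place the forcing in $V^\ast$. You route through $L^{6/5}(\CO)\hookrightarrow H^{-1}_2(\CO)\subset V^\ast$, which requires $\eta\xi^2\in L^{6/5}(\CO)$ pointwise in time. By H\"older in space this asks for $\frac1{p^\ast}+\frac2m\le\frac56$, whereas Hypothesis \ref{init} guarantees only $\frac1{p^\ast}+\frac2m<1$. For $d=3$ and $p^\ast,m$ near the boundary of the admissible range, $\eta\xi^2$ lies only in $L^r$ for some $r$ barely exceeding $1$, and your embedding fails. The target $H^{-1}_2$ is simply too small a subspace of $V^\ast$. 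The paper avoids this by observing that, under the Riesz identification via $(-\Delta)^{-1}$, an element $\varphi$ lies in $V^\ast$ precisely when $(-\Delta)^{-1}\varphi\in L^{(\gamma+1)/\gamma}(\CO)$, and for $d\le3$, $\gamma>2$, the smoothing of $(-\Delta)^{-1}$ combined with the embedding $L^1(\CO)\hookrightarrow H^{-2}_{(\gamma+1)/\gamma}(\CO)$ gives $|\eta\xi^2|_{V^\ast}\le C|\eta\xi^2|_{L^1}$ (equation \eqref{calculations}), which only needs $\frac1{p^\ast}+\frac2m\le1$. Replacing $L^{6/5}\to H^{-1}_2$ by $L^1\to V^\ast$ repairs your argument without changing anything else; together with the time-integrability dictated by the cutoff the required bound on the drift then follows as you outlined.

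Two small additional remarks. First, the integrability your plan calls for in time ($L^2$ in $V^\ast$) is stronger than necessary: the variational theorem with $\alpha=\gamma+1$ only requires the forcing/remainder $f$ to lie in $L^1([0,T]\times\Omega)$ or, if you keep $B$ additive, in $L^{(\gamma+1)/\gamma}$ in $V^\ast$; since $(\gamma+1)/\gamma<2$ and the interval is finite, $L^2$ is sufficient but overkill, and you risk imposing unnecessary constraints on $m_0,p_0^\ast$. Second, the bound you write for the cutoff is stated loosely: $\int_0^T|\eta(s)|_{L^{\gamma+1}}^{\gamma+1}\,ds$ is not controlled by $(2\kk)^{1/\nu}$; what the definition of $h$ delivers is that the truncated quantities $\int_0^T\phi_\kk(h(\eta,\xi,s))|\eta(s)|_{L^{\gamma+1}}^{\gamma+1}\,ds$ and $\int_0^T\phi_\kk(h(\eta,\xi,s))|\xi(s)|_{L^m}^{m_0}\,ds$ are so bounded (cf.\ \eqref{eq:v-bound-L1}). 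This is what ultimately enters the verification of (H3) and the closing Gronwall step.
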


\begin{proof}
Before starting with the proof,
we introduce the setting used by the book by Barbu, Da Prato and R\"ockner \cite{BDPR2016}, and the book by Liu and R\"ockner \cite{weiroeckner}, respectively.
Let $\CH:=H_{2}^{-1}(\Ocal)$, the dual space of $\CH^\ast={H}_{2}^{1}(\Ocal)$
(corresponding to Neumann boundary conditions). Let $V:=L^{\gamma+1}(\Ocal)$.
By the Sobolev embedding theorem, $\Hcal^{\ast}\hookrightarrow L^{(\gamma+1)/\gamma}(\Ocal)$
densely and compactly. Thus, upon identifying $\Hcal$ with its own dual space
via the Riesz-map $(-\Delta)^{-1}$ of $\Hcal$, we have a Gelfand triple
\[
V\subset \CH\equiv\CH^{\ast}\subset V^{\ast}.
\]
We set
\[
A(t,u,\omega):=r_u\Delta u^{[\gamma]}-\chi\phi_\kappa(h(\eta(\omega),\xi(\omega),t))\xi^{2}(t,\omega) u.
\]
Note, due to the assumption $(\xi,\eta)\in\subX\subset\CM_\MA^{2,m_0}(\BY,\BX)$,  $\xi$ is  adapted such that for any $q_2\in [1,m_0]$,
\begin{equation}
\left( \int_{0}^{T}\vert\phi_\kappa(h(\eta,\xi,t))\xi(t)\vert_{L^{m}}^{m_0}
\,dt\right)\le (2\kappa)^{1/\nu}.\label{eq:v-bound-L1}
\end{equation}
Now,
given \eqref{eq:v-bound-L1}, we shall verify the conditions
of \cite[Theorem 5.1.3]{weiroeckner} for
$\gamma>1$.
First note that for fixed $t\in[0,T]$ and fixed $\omega\in\Omega$,
$A$ maps from $V$ to $V^{\ast}$. In particular, by arguments as in the proof of Lemma \ref{lem:useful},
we have for $\gamma>1$
\begin{align} \label{eq:A-bounded}
&\left\vert\mathbin{_{V^{\ast}}\langle A(t,u),w\rangle_{V}}\right\vert \\
=& \nonumber\left\vert\int_{\Ocal}\left[r_u u^{[\gamma]}(x)w(x)+(-\Delta)^{-1}(\chi\phi_\kappa(h(\eta,\xi,t))\xi^{2}(t,x)u(x)) w(x)\right]\,dx\right\vert
\\
\nonumber \le&\left\vert r_u\vert u\vert_{L^{\gamma+1}}^{\gamma}\vert w\vert_{L^{\gamma+1}}+\chi\vert(-\Delta)^{-1}(\phi_\kappa(h(\eta,\xi,t))\xi^2(t) u\vert_{L^{(\gamma+1)/\gamma}}\vert w\vert_{L^{\gamma+1}}\right\vert\\
\nonumber \le&\left[r_u\vert u\vert_{L^{\gamma+1}}^{\gamma}+
C(\kappa,\ep_1,\chi) + \ep_1\vert u\vert^{\gamma+1}_{L^{\gamma+1}} +C(\kappa,\ep_1,\chi)\vert u\vert_{H^{-1}_2}^2 \right]\vert w\vert_{L^{\gamma+1}},
\end{align} 
where only $\xi$,  and $h(\eta,\xi,\cdot)$ depend on $t$ and $\omega$.

Next, we verify Hypothesis (H1), (H2$^{\prime}$), (H3), and  (H4$^{\prime}$) of \cite[Theorem 5.1.3]{weiroeckner}.

\begin{enumerate}
\item[(H1):]  For $\lambda\in\mathbb{R}$, $u_{1},u_{2},w\in L^{\gamma+1}(\Ocal)$, $t\in[0,T]$,
$\omega\in\Omega$ consider the map
\[
\lambda\mapsto\langle A(t,u_{1}+\lambda u_{2}),w\rangle
\]
and show its continuity.
Note, that we have
\begin{align*}
&\langle A(t,u_{1}+\lambda u_{2}),w\rangle\\
=&-r_u\int_{\Ocal}(u_{1}(x)+\lambda u_{2}(x))^{[\gamma]}w(x)\,dx\\
&-\chi\int_{\Ocal}(-\Delta)^{-1}[\phi_\kappa(h(\eta,\xi,t))\xi^{2}(t,x)(u_1(x)+\lambda u_2(x)) ]w(x)\,dx,
\end{align*}
where only $\xi$, and $h(\eta,\xi,\cdot)$ depend on $t$ and $\omega$.
For the first integral
in the above identity, we prove continuity with the same arguments
as in \cite[Example 4.1.11, p. 87]{weiroeckner}. The second integral can be bounded by Lemma \ref{lem:useful} as follows
\begin{align*} 
\lqq{\chi\left\vert\int_{\Ocal}(-\Delta)^{-1}[\phi_\kappa(h(\eta,\xi,t))\xi^{2}(t,x)(u_1(x)+\lambda u_2(x)) ]w(x)\,dx\right\vert
}&\\
\le&
 C\left(1+\vert u_{1}+\lambda u_{2}\vert_{H_2^{-1}}^2\right)+\eps_1\vert u_{1}+\lambda u_{2}\vert_{L^{\gamma+1}}^{\gamma+1}+\eps_2\vert w\vert_{L^{\gamma+1}}^{\gamma+1}.
\end{align*} 
Hence, the assumption (H1) of \cite[Theorem 5.1.3]{weiroeckner} is satisfied.
\item[(H2$^{\prime}$):]  Let $u,w\in L^{\gamma+1}(\Ocal)$, $t\in[0,T]$, $\omega\in\Omega$. Take
\eqref{eq:porous-medium-inequality} into account and apply Lemma \ref{lem:useful} to get that
\begin{align*}
& \mathbin{_{V^{\ast}}\langle A(t,u)-A(t,w),u-w\rangle_{V}}
+\vert\sigma_1 u - \sigma_1 w\vert^2_{L_{\textup{HS}}(H_1,\CH)}\\
\le& -r_u\int_{\Ocal}(u^{[\gamma]}(x)-w^{[\gamma]}(x))(u(x)-w(x))\,dx+C\vert u-w\vert_{\CH}^{2}\\
&+ \eps\vert u-w\vert^{\gamma+1}_{L^{\gamma+1}} +C(\eps,\kappa,\chi)\left(1+\vert u-w\vert_{H^{-1}_2}^2\right)\\
\le & (\eps-2^{1-\gamma}r_u)\vert u-w\vert_{L^{\gamma+1}}^{\gamma+1}+C(1+\vert u-w\vert_{\CH}^{2})
\end{align*}
and (H2$^{\prime}$) of \cite[Theorem 5.1.3]{weiroeckner} is satisfied.

\item[(H3):]  Let $u\in L^{\gamma+1}(\Ocal)$, $t\in[0,T]$, $\omega\in\Omega$. Straightforward calculations, estimate \eqref{eq:A-bounded}, and taking into account Hypothesis \ref{init} give,
\begin{align*} 
\lqq{
 \mathbin{_{V^{\ast}}\langle A(t,u),u\rangle_{V}}+\vert\sigma_1 u\vert^2_{L_{\textup{HS}}(H_1,\Hcal)}}&
 \\
\le& -\int_{\Ocal}\left[r_u u^{[\gamma]}(x)u(x)+\chi(-\Delta)^{-1}(\phi_\kappa(h(t,\eta,\xi))\xi^{2}(t,x)u(x))u(x)\right]\,dx+C\vert u\vert_{\Hcal}^{2}
\\
\le& -r_u\vert u\vert_{L^{\gamma+1}}^{\gamma+1}+
C(\kappa,r_u,\chi) + \frac {r_u}4\vert u\vert^{\gamma+1}_{L^{\gamma+1}} +C(\kappa,r_u,\chi )\vert u\vert_{H^{-1}_2}^2 +\frac {r_u}4\vert u\vert^{\gamma+1}_{L^{\gamma+1}}.
\end{align*} 
As a consequence, (H3) of \cite[Theorem 5.1.3]{weiroeckner} holds with  $\theta:=r_u$.

\item[(H4$^{\prime}$):]  Let $u\in L^{\gamma+1}(\CO)$, $t\in[0,T]$, $\omega\in\Omega$. Then,
(H4$^{\prime}$) of \cite[Theorem 5.1.3]{weiroeckner} holds by \eqref{eq:A-bounded} with $\alpha:=\gamma+1$, and
$\beta:=0$.
\end{enumerate}
The rest of the proof follows by an application of \cite[Theorem 5.1.3]{weiroeckner}.
\end{proof}

\begin{proposition}\label{positivityu}
Under the conditions of Theorem \ref{theou1} and the additional conditions $u_0(x)\ge 0$ for all $x\in\CO$ and $\xi(t,x)\ge 0$,  for a.e. $(t,x)\in[0,T]\times \CO$, the solution $u_\kappa$
to \eqref{eq:cutoffuu} is a.e. nonnegative.
\end{proposition}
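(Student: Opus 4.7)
The plan is to apply It\^o's formula to a smooth, convex functional that penalises negativity of $u_\kappa$ and to deduce a Grönwall-type inequality which forces the negative part to vanish.

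I would begin by choosing a family $\{\Phi_\varepsilon\}_{\varepsilon>0}\subset C^2(\R)$ of convex functions with $\Phi_\varepsilon\equiv 0$ on $[0,\infty)$ and such that $\Phi_\varepsilon(r)\nearrow (r^-)^2$, $\Phi_\varepsilon'(r)\to -2r^-$, $\Phi_\varepsilon''(r)\to 2\mathbbm{1}_{\{r<0\}}$ pointwise as $\varepsilon\downarrow 0$. Setting $\mathcal{I}_\varepsilon(u):=\int_{\CO}\Phi_\varepsilon(u(x))\,dx$, the It\^o formula in the Gelfand triple framework of Theorem \ref{theou1} (after a regularisation, e.g. by a Yosida-type resolvent of $-\Delta$ or by mollification, in the spirit of \cite[Chapter 4]{weiroeckner} and of \cite{BDPR3}) can be extended to the functional $\mathcal{I}_\varepsilon(\uk(t))$ evaluated along the solution of \eqref{eq:u-spde}.

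I would then decompose the resulting identity into five contributions and analyse each in turn. (a) The initial value $\mathcal{I}_\varepsilon(u_0)=0$ since $u_0\geq 0$. (b) The porous-media drift, after integration by parts and the identity $\nabla u^{[\gamma]}=\gamma|u|^{\gamma-1}\nabla u$, becomes $-r_u\gamma\int_{\CO}\Phi_\varepsilon''(\uk)|\uk|^{\gamma-1}|\nabla \uk|^2\,dx\leq 0$, exploiting $\Phi_\varepsilon''\geq 0$. (c) The It\^o correction from the linear multiplicative noise is bounded by $C\,\mathcal{I}_\varepsilon(\uk)$ thanks to Remark \ref{LSSigma}. (d) The stochastic integral is a local martingale which, after a stopping-time localisation, vanishes in expectation. (e) The source drift, in the limit $\varepsilon\downarrow 0$, equals $2\chi\int_{\CO}\phi_\kappa(h(\eta,\xi,t))\,\eta\,\xi^2\,\uk^{\!-}\,dx$, which is a priori nonnegative.

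The principal obstacle is (e): since the source $-\chi\phi_\kappa\,\eta\,\xi^2$ is only nonpositive and is externally supplied rather than linear in $\uk$, its pairing with $-\uk^{\!-}$ is not of the right sign to dissipate $\mathcal{I}_\varepsilon$. To handle it, I would use the $L^\infty$-in-time bound on $\phi_\kappa(h)$ provided by the cut-off together with the integrability $\eta\in L^{p^\ast}$ and $\xi\in L^{m}$ built into $\subX$, so that H\"older's and Young's inequalities yield $2\chi\int\phi_\kappa\,\eta\,\xi^2\,\uk^{\!-}\,dx\leq C_\kappa\,\mathcal{I}_\varepsilon(\uk)+\delta\,(\text{dissipation from (b)})$ for a small $\delta>0$; alternatively, at the fixed point of $\Vcal_\kappa$ where $\eta=\uk$, the identity $\eta\,\uk^{\!-}=-(\uk^{\!-})^2$ on $\{\uk<0\}$ turns the source contribution into $-2\chi\int\phi_\kappa\,\xi^2(\uk^{\!-})^2\,dx\leq 0$ directly. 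Taking expectations, applying Grönwall's lemma, and sending $\varepsilon\downarrow 0$ then yields $\mathbb{E}[\|\uk^{\!-}(t)\|_{L^2}^2]=0$ for every $t\in[0,T]$, and hence $\uk\geq 0$ a.e.\ in $[0,T]\times\CO$, $\mathbb{P}$-a.s.
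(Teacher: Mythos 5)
Your It\^o-with-convex-penalty strategy is the natural one (and is the same mechanism the paper gestures at via Barbu--Da~Prato--R\"ockner), and you correctly pinpoint step (e) as the crux: the externally supplied source $-\chi\phi_\kappa\eta\xi^2\le 0$ contributes $2\chi\int_\CO\phi_\kappa\eta\xi^2\,u_\kappa^-\,dx\ge 0$ to $\tfrac{d}{dt}\Ical_\varepsilon$, which has the wrong sign. The gap is that neither of your two fixes actually closes this. The H\"older/Young route is intrinsically non-quadratic: $\int\phi_\kappa\eta\xi^2\,u_\kappa^-\,dx$ is only \emph{linear} in $u_\kappa^-$, so every H\"older splitting carries exactly one factor $\|u_\kappa^-\|_{L^q}\sim\Ical_\varepsilon^{1/2}$. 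Even after diverting a piece into the porous-media dissipation from (b), Young's inequality leaves an additive, data-dependent term $J(t)=J(\kappa,\Reins,\Rzwei,\Rdrei;\eta,\xi)$ that does not vanish as $u_\kappa^-\to 0$. Gr\"onwall applied to $\frac{d}{dt}\EE\Ical_\varepsilon\le C\,\EE\Ical_\varepsilon+J$ with $\Ical_\varepsilon(0)=0$ then gives $\EE\Ical_\varepsilon(t)\le\int_0^t J(s)e^{C(t-s)}\,ds$, which is generically positive; and if instead one keeps the $\Ical_\varepsilon^{1/2}$, the differential inequality $\dot y\le Cy+C'y^{1/2}$, $y(0)=0$, is of Osgood (non-Lipschitz) type and does not force $y\equiv 0$ --- e.g.\ $y(t)=(C't/2)^2$ also satisfies it. Your second fix, using $\eta\,u_\kappa^-=-(u_\kappa^-)^2$, is valid precisely when $\eta=u_\kappa$, but the proposition must hold for \emph{every} $(\eta,\xi)\in\subX$ in order to establish the invariance $\Vcal_\kappa(\Xcal_\MA)\subseteq\Xcal_\MA$ that the fixed-point scheme needs; at that stage $\eta$ and $u_\kappa=\Vcal_\kappa(\eta,\xi)$ are distinct processes, and if one already knew $\eta=u_\kappa$ with $\eta\ge 0$, there would be nothing to prove.

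It is worth flagging that the paper's own one-sentence proof (BDPR Section~2.6 plus a Kotelenez comparison principle) does not address this explicitly either: the analogous argument for $v_\kappa$ in Proposition~\ref{positivityv} succeeds because its source $+\phi_\kappa\eta\xi^2$ is nonnegative and is convolved against a positivity-preserving semigroup, whereas the source in \eqref{eq:cutoffuu} has the opposite sign, so a comparison against the zero process runs the wrong way. The obstacle you isolate is therefore genuine, and your current proposal does not resolve it; a correct argument would have to exploit additional structure (e.g.\ positivity only for the coupled system \eqref{eq:cutoffu} where the sink is $-\chi\phi_\kappa u_\kappa v_\kappa^2$ and hence sign-adapted, or a Stampacchia/obstacle modification of the drift) rather than the frozen-nonlinearity equation with a fixed nonpositive source.
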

\begin{proof}
For the nonnegativity of the solution to \eqref{eq:cutoffuu}, we refer to the proof of positivity of the stochastic porous medium equation, see
Section 2.6 in \cite{BDPR2016} and see also \cite{BDPR3}. Mimicking the proof of nonnegativity  in \cite{BDPR2016} and applying a comparison principle \cite{Kotelenez1992} the nonnegativity of  \eqref{eq:cutoffuu} can be shown.
\end{proof}

In the next proposition we are using  variational methods to verify uniform bounds of $u_\kappa$, $\kappa\in\N$.
\begin{proposition}\label{uniformlpbounds}
Fix $p\ge 1$. Then, there exists a constant $C_0(p,T)>0$ such that for every $u_0\in L^{p+1}(\Omega,\Fcal_0,\PP;L^{p+1}(\CO ))$,
 and for every $\kk\in\NN$ and for every $(\xi,\eta)\in\subX$ such that $\uk$ solves \eqref{eq:cutoffuu},
 the following estimate
\begin{align*} \nonumber
\lqq{\EE \left[\sup_{0\le s\le T} \vert\uk (s)\vert_{L^{p+1} }^{p+1}\right]
+ \gamma p(p+1)r_u\EE \int_0^ \TInt\int_\CO  \vert\uk(s,x)\vert^{p+\gamma-2} \vert\nabla \uk (s,x)\vert^2\, dx \, ds} &
\\\nonumber &{} +(p+1)\chi\EE \int_0^\TInt \int_\CO  \uk^{[p]}(s,x) \phi_\kk (h(\eta,\xi,s))u_\kappa(s,x)\xi^2(s,x)\, dx \, ds
\\
\le& C_0(p,T)\,\left( \EE\vert u_0\vert_{L^{p+1}}^{p+1}+1\right)
\end{align*} 
is valid.
\end{proposition}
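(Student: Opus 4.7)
The plan is to apply an It\^o formula to the functional $F(u) = \frac{1}{p+1}|u|_{L^{p+1}}^{p+1}$ along the solution $\uk$ of \eqref{eq:cutoffuu}, then exploit the sign of the porous medium contribution, the boundedness of the cut-off, and the linear structure of the multiplicative noise to close the estimate via Gronwall's lemma.

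First, I would regularize by working with $F_\varepsilon(u) := \frac{1}{p+1}\int_\Ocal (u^2+\varepsilon)^{(p+1)/2}\,dx$ (to ensure $C^2$-regularity for small $p$) and apply the It\^o formula from the variational framework of \cite{weiroeckner} (the solution of \eqref{eq:cutoffuu} from Theorem \ref{theou1} lies in the Gelfand triple $V\subset\Hcal\subset V^*$ with $V=L^{\gamma+1}(\Ocal)$, $\Hcal=H_2^{-1}(\Ocal)$). Two integrations by parts on the porous medium drift yield, in the limit $\varepsilon\downarrow 0$,
\begin{equation*}
(p+1)\int_\Ocal u^{[p]}\,r_u\Delta u^{[\gamma]}\,dx = -\gamma p(p+1)r_u\int_\Ocal |u|^{p+\gamma-2}|\nabla u|^2\,dx,
\end{equation*}
which produces the desired good sign. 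The truncated nonlinear drift contributes exactly the third term with the correct prefactor. The It\^o correction from the linear multiplicative noise $\sigma_1 \uk\,dW_1$ reads
\begin{equation*}
\tfrac{p(p+1)}{2}\int_\Ocal |\uk|^{p-1}\sum_{k}|(\sigma_1\uk)\psi_k|^2\,dx,
\end{equation*}
which, by Remark \ref{LSSigma} (i.e.\ the pointwise bound $\sum_k [(\sigma_1 u)(x)\psi_k(x)]^2 \le C|u(x)|^2$ coming from Hypothesis \ref{wiener} and \eqref{abs_infty}), is dominated by $C\,|\uk|_{L^{p+1}}^{p+1}$.

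Collecting terms gives, for every $t\in[0,T]$,
\begin{align*}
|\uk(t)|_{L^{p+1}}^{p+1} &{}+ \gamma p(p+1)r_u\int_0^t\!\!\int_\Ocal |\uk|^{p+\gamma-2}|\nabla\uk|^2\,dx\,ds\\
&{}+(p+1)\chi\int_0^t\!\!\int_\Ocal \uk^{[p]}\phi_\kk(h(\eta,\xi,s))\eta\,\xi^2\,dx\,ds\\
&\le |u_0|_{L^{p+1}}^{p+1}+C\int_0^t|\uk(s)|_{L^{p+1}}^{p+1}\,ds + M(t),
\end{align*}
where $M$ is a local martingale arising from the stochastic integral against $W_1$. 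Taking $\sup_{0\le s\le t}$ and expectation, the Burkholder-Davis-Gundy inequality from Appendix \ref{sec:BDG} applied to $M$ gives
\begin{equation*}
\EE\Bigl[\sup_{0\le s\le t}|M(s)|\Bigr]\le C\,\EE\Bigl(\int_0^t |\uk(s)|_{L^{p+1}}^{2(p+1)}\,ds\Bigr)^{1/2}\le \tfrac{1}{2}\EE\Bigl[\sup_{0\le s\le t}|\uk(s)|_{L^{p+1}}^{p+1}\Bigr]+C\EE\int_0^t|\uk|_{L^{p+1}}^{p+1}\,ds,
\end{equation*}
by Young's inequality. Absorbing the first term on the left and applying Gronwall's lemma on $[0,T]$ yields the stated bound with $C_0(p,T)$ independent of $\kk$ and of $(\eta,\xi)\in\Xcal_\Afrak$, since the sign of the truncated nonlinear term (which is nonnegative by Proposition \ref{positivityu} and nonnegativity of $\eta,\xi$) lets us keep it on the left-hand side.

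The main obstacle I expect is the rigorous justification of the It\^o expansion: $|u|^{p+1}$ is not a priori a $C^2$-functional on $\Hcal$, and the porous medium drift only lies in $V^*$, not in $\Hcal$. The standard route is the $\varepsilon$-regularization described above together with a cut-off of $u$ at level $N$, using the $\P$-a.s.\ pathwise bound $\uk\in L^{\gamma+1}(0,T;L^{\gamma+1}(\Ocal))\cap C([0,T];\Hcal)$ from Theorem \ref{theou1} and the nonnegativity from Proposition \ref{positivityu} to pass both $\varepsilon\downarrow 0$ and $N\uparrow\infty$ by monotone/dominated convergence; a localization by stopping times $\tau_N=\inf\{t\colon|\uk(t)|_{L^{p+1}}>N\}$ makes the local martingale an honest martingale during the BDG step, and the final estimate permits $N\uparrow\infty$ by Fatou.
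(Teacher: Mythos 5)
Your proof follows essentially the same route as the paper's: apply It\^o's formula to $u\mapsto|u|_{L^{p+1}}^{p+1}$ in the variational framework, integrate by parts on the porous medium drift to get the signed gradient term, bound the It\^o correction via the linear growth of $\sigma_1$ from Remark \ref{LSSigma}, and close with Burkholder--Davis--Gundy and Gronwall. The extra care you take with the $\varepsilon$-regularization and stopping-time localization is a sound way to make the It\^o expansion rigorous; the paper states this step more tersely but relies on the same mechanism.
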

\begin{proof}
By It\^o's formula for the functional $u\mapsto\vert u\vert^{p+1}_{L^{p+1}}$, we get that $u_\kappa$ satisfies for $t\in [0,T]$,
\begin{align*}
&\vert u_\kappa (t)\vert^{p+1}_{L^{p+1}}-\vert u_0\vert^{p+1}_{L^{p+1}}\\
\le &(p+1)r_u \int_0^t\int_\Ocal \Delta(u_\kappa^{[\gamma]}(s,x))u^{[p]}_\kappa(s,x)\,dx\,ds\\
&-(p+1)\chi\int_0^t\int_\Ocal \phi_\kk (h(\eta,\xi,s))u_\kappa(s,x)\xi^2(s,x) u^{[p]}_\kappa(s,x)\,dx\,ds\\
&+M(t)+\frac{p}{2}(p+1)\int_0^t \vert u_\kappa(s)\vert^{p-1}_{L^{p+1}}\vert\sigma_1 u_\kappa(s)\vert_{\gamma(H_1,L^{p+1})}^2\,ds,
\end{align*}
where $t\mapsto M(t)$ is a local martingale. Integrating by parts, taking expectation (we may as well apply the Burkholder-Davis-Gundy inequality, see Section \ref{sec:BDG}) and rearranging yields,
\begin{align*}
&\EE\left[\vert u_\kappa (s)\vert^{p+1}_{L^{p+1}}\right]+(p+1)r_u \EE\int_0^t\int_\Ocal \nabla (u_\kappa^{[\gamma]}(s,x))\cdot\nabla (u_\kappa^{[p]}(s,x))\,dx\,ds\\
&+(p+1)\chi\EE\int_0^t\int_\Ocal \phi_\kk (h(\eta,\xi,s))u_\kappa(s,x)\xi^2(s,x) u^{[p]}_\kappa(s,x)\,dx\,ds\\
\le&\EE\vert u_0\vert^{p+1}_{L^{p+1}}+\frac{p}{2}(p+1)C(p)\EE\int_0^t \vert u_\kappa(s)\vert^{p+1}_{L^{p+1}}\,ds+\EE\int_0^t\vert\sigma_1 u_\kappa(s)\vert_{\gamma(H_1,L^{p+1})}^{(p+1)/(p-1)}\,ds,
\end{align*}
and hence by Remark \ref{LSSigma} and Gronwall's lemma,
\begin{align*}
&\EE\left[\sup_{0\le s\le T}\vert u_\kappa (s)\vert^{p+1}_{L^{p+1}}\right]+\gamma p(p+1)r_u \EE\int_0^T\int_\Ocal \vert\nabla u_\kappa(s,x)\vert^2 \vert u(s,x)\vert^{\gamma+p-2} dx\,ds\\
&+(p+1)\chi\EE\int_0^T\int_\Ocal \phi_\kk (h(\eta,\xi,s)) u_\kappa(s,x)\xi^2(s,x) u^{[p]}_\kappa(s,x)\,dx\,ds\\
\le&C_0(p,T)\left(\EE\vert u_0\vert^{p+1}_{L^{p+1}}+1\right).
\end{align*}
\end{proof}

We may remark that the above result now permits an application of Proposition \ref{runst1}. Note that the inequality becomes particularly useful, when
$u_\kappa\ge 0$ and $\eta\ge 0$.

\subsection{Properties of equation \eqref{eq:cutoffvv}}

Given the couple  $(\eta,\xi)\in\CM_\MA^{2,m_0}(\BY,\X)$,
we consider  the solution $\vk$  to the equation \eqref{eq:cutoffvv}.
First, we will
In the next proposition we investigate existence and uniqueness and the regularity of the process $\wk$.
The constants $\Reins,\Rzwei$ and $\Rdrei$ are given as in \eqref{constantR1},  \eqref{constantR2}, and  \eqref{constantR3}, respectively.

\begin{proposition}\label{propvarational}
Assume that the Hypotheses of Theorem \ref{mainresult} hold.
Then, for any pair $(\eta,\xi)\in\subX\subset\CM_\MA^{2,m_0}(\BY,\mathbb{Z})$ and for any initial datum $v_0$ as in Hypothesis \ref{init},
there exists a solution $\vk$ to \eqref{eq:cutoffvv} on the time interval $[0,T]$
such that $v_\kappa$ is continuous in $H^\rho_2(\Ocal)$ and such that
$$ \wk\in L^\infty(0,T;H^{\rho}_2(\CO))\cap L^2(0,T;H^{\rho+1}_2(\CO))\quad \PP\mbox{-a.s.}
$$
In addition, for all $\kk\in\NN$ there exists a constant $C=C(T,\kk)>0$ such that
\begin{align} \label{estimatesol2}
 \EE \Vert\vk\Vert_{\BH}^{m_0}
\le \EE \vert v_0\vert_{H^\rho_2}^{m_0} +C(\kk,T)\Reins.
\end{align} 
\end{proposition}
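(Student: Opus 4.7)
The plan is to view equation \eqref{eq:cutoffvv} as a linear SPDE with an inhomogeneous deterministic-in-$\omega$ forcing in the variational framework on the Gelfand triple
\[
V := H^{\rho+1}_2(\Ocal) \;\hookrightarrow\; H := H^{\rho}_2(\Ocal) \;\hookrightarrow\; V^{\ast} := H^{\rho-1}_2(\Ocal),
\]
noting that $v_\kk$ enters the equation only linearly (both through $r_v\Delta v_\kk$ and through the multiplicative noise $\sigma_2 v_\kk$), so no monotonicity/coercivity of the drift in $v_\kk$ beyond that of $\Delta$ is needed.

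First, I would check that the forcing $f_\kk(t) := \phi_\kk(h(\eta,\xi,t))\,\eta(t)\,\xi^2(t)$ is $\PP$-a.s.\ in $L^{m_0/2}(0,T;V^{\ast})$, with the appropriate moment bound. By H\"older's inequality
\[
|f_\kk(t)|_{L^q} \le |\eta(t)|_{L^{p^\ast}}\,|\xi(t)|_{L^{m}}^{2},\qquad \tfrac{1}{q}=\tfrac{1}{p^\ast}+\tfrac{2}{m},
\]
and under the parameter conditions of Hypothesis \ref{init} one has the Sobolev embedding $L^q(\Ocal)\hookrightarrow H^{\rho-1}_2(\Ocal)$. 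Integrating in time and using the cutoff, the defining condition $\phi_\kk(h(\eta,\xi,t))\ne 0$ forces $h(\eta,\xi,t)\le 2\kk$, whence
\[
\int_0^T \mathbbm{1}_{\{\phi_\kk\ne 0\}}|\xi(s)|_{L^m}^{m_0}\,ds\le(2\kk)^{1/\nu},
\]
and the uniform bound $\sup_{[0,T]}|\eta(s)|_{L^{p^\ast}}^{p_0^\ast}\le \Reins^{p_0^\ast}$ holds in expectation because $(\eta,\xi)\in\subX$. Combining these with H\"older's inequality in $t$ yields $\EE\|f_\kk\|_{L^{m_0/2}(0,T;V^{\ast})}^{m_0/2}\le C(\kk,T)\,\Reins$.

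Second, since $\Delta:V\to V^{\ast}$ is linear, continuous and coercive (in the sense $\langle -\Delta v,v\rangle_{V^{\ast},V}\ge c|v|_V^2 - c'|v|_H^2$), and since by the discussion preceding the statement (see Remark~\ref{LSSigma}) the map $v\mapsto \sigma_2 v$ is Lipschitz continuous and of linear growth from $H$ into $\gamma(H_2,H)$, the hypotheses (H1)--(H4) of the variational existence theorem (e.g.\ \cite[Thm.~4.2.4]{weiroeckner}) are satisfied for the affine drift $A(t,v):=r_v\Delta v + f_\kk(t)$. This yields a unique $\mathbb{F}$-adapted solution $v_\kk\in L^2(\Omega;C([0,T];H))\cap L^2(\Omega;L^2(0,T;V))$ to \eqref{eq:cutoffvv}.

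Third, for the $m_0$-th moment bound, I would apply It\^o's formula to $\|v_\kk(t)\|_H^{2}$ and then to $\|v_\kk(t)\|_H^{m_0}$ (or equivalently raise the inequality to power $m_0/2$), obtaining, after using coercivity, Cauchy--Schwarz in the pairing with $f_\kk$ and Young's inequality,
\[
\|v_\kk(t)\|_H^{m_0}+\int_0^t \|v_\kk(s)\|_V^2\,\|v_\kk(s)\|_H^{m_0-2}\,ds \;\lesssim\; \|v_0\|_H^{m_0}+\int_0^t\|f_\kk(s)\|_{V^\ast}^{2}\,\|v_\kk(s)\|_H^{m_0-2}\,ds + \text{(martingale)}+ \text{(quadratic variation)}.
\]
Taking the supremum, applying the Burkholder--Davis--Gundy inequality to control the stochastic integral (cf.\ Appendix \ref{sec:BDG}), using the linear growth of $\sigma_2$ to absorb the quadratic variation term into the left-hand side, and applying Gronwall's lemma gives the desired estimate \eqref{estimatesol2}.

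The main obstacle I anticipate is purely bookkeeping: verifying that the triple of exponents $(m_0,m,p^\ast)$ together with $\rho$ and $d$ permits both the Sobolev embedding $L^q\hookrightarrow V^{\ast}$ and the time integrability needed for $f_\kk\in L^{m_0/2}(0,T;V^{\ast})$ with the $\Reins$-dependent bound. Once this is settled via Hypothesis \ref{init}, the existence and the energy estimate follow from standard variational SPDE machinery applied to a \emph{linear} equation, so no genuinely new PDE difficulty arises here.
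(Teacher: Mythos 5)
Your proposal takes a genuinely different route from the paper. The paper's proof is semigroup/mild: it introduces $w$ solving the \emph{forcing-free} linear stochastic heat equation $d w = r_v\Delta w\,dt + \sigma_2 w\,dW_2$, writes $v_\kk(t) = w(t) + \int_0^t e^{r_v\Delta(t-s)} F(s)\,ds$, and then controls the deterministic convolution term via Minkowski, the smoothing of $e^{t\Delta}$ on the scale $H^s_2$, the Sobolev embedding $L^1\hookrightarrow H^{\rho-\delta}_2$, and Young's inequality for convolutions; the paper's bounds \eqref{erstes3}--\eqref{zweites3} play exactly the role that your step~1 and step~3 would play. You instead set up the full variational framework on the triple $H^{\rho+1}_2\hookrightarrow H^{\rho}_2\hookrightarrow H^{\rho-1}_2$, treat $F$ as an $\mathbb{F}$-progressive $V^\ast$-valued forcing, verify (H1)--(H4), and derive the energy/moment bound by It\^o, BDG and Gronwall. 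Both are legitimate. Your variational route has an advantage worth recording: the paper's identity $v_\kk = w + \int_0^t e^{r_v\Delta(t-s)}F(s)\,ds$ is not quite the mild representation of \eqref{eq:cutoffvv}, since the remainder $z=v_\kk-w$ still carries the multiplicative noise $\sigma_2 z\,dW_2$; making the mild decomposition rigorous requires an additional fixed-point/Gronwall step that the paper leaves implicit. Your approach treats the multiplicative noise within the variational theorem itself and therefore sidesteps that issue. What the semigroup route buys is a more transparent accounting of the heat smoothing, which is why the paper also uses it to get the extra space--time regularity needed for tightness in Proposition~\ref{semigroup}.

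One bookkeeping caveat, concentrated exactly where you anticipated it: for the $m_0$-th moment estimate
$\EE\sup_t\|v_\kk\|_{H^\rho_2}^{m_0}+\EE\bigl(\int_0^T\|v_\kk\|_{H^{\rho+1}_2}^2\,ds\bigr)^{m_0/2}$
the variational Gronwall argument produces a right-hand side controlled by $\EE\bigl(\int_0^T\|f_\kk\|_{V^\ast}^2\,ds\bigr)^{m_0/2}$, i.e.\ by the $m_0$-th moment of $\|f_\kk\|_{L^2(0,T;V^\ast)}$, not the $(m_0/2)$-th moment of $\|f_\kk\|_{L^{m_0/2}(0,T;V^\ast)}$ as written in your step~1. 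On the event $\{\phi_\kk(h(\eta,\xi,\cdot))\ne 0\}$ the time integral $\int_0^T|\xi(s)|_{L^m}^{m_0}\,ds$ is deterministically bounded by $(2\kk)^{1/\nu}$, so after H\"older in time only $\sup_{s\le T}|\eta(s)|_{L^{p^\ast}}$ remains random, and the moment one must take is then governed by the definition of $\subX$ via $\Reins$. You should make sure the exponent landing on $\EE\sup_s|\eta(s)|_{L^{p^\ast}}$ does not exceed $p_0^\ast$; this is the same exponent-compatibility check the paper's proof carries out implicitly after \eqref{zweites.2}, and it is worth stating explicitly in your write-up.
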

\begin{proof}
Let us consider the following equation with a locally integrable and progressively measurable $t\mapsto F(t)$
\begin{align} \label{wwwoben3}
 \qquad d w(t)=&r_v\Delta   w(t)\,dt+F(t)\,dt+\sigma_2 w(t)\, dW_2(t),\quad w(0)=w_0\in H^\rho_2(\CO),
\end{align} 
where the Laplace operator $\Delta$ is equipped with periodic boundary conditions on a box or Neumann boundary conditions and initial condition $w(0)= w_0$.
A solution to \eqref{wwwoben3} for $F\equiv 0$ is given by standard methods (see e.g. \cite[Chapter 6]{DaPrZa:2nd} or \cite[Theorem 4.2.4]{weiroeckner})
such that for any $q\ge 1$,
\begin{align} \label{erstes3}
\EE\Vert w\Vert^q_{C([0,T];H_2^\rho)}\le C_1(T)\quad  \mbox{and}\quad  \EE\Vert w\Vert^q_{L^2([0,T];H^{\rho+1}_2)}\le C_2(T).
\end{align} 
The solution to \eqref{wwwoben3} is given by
$$
\wk(t)=w(t)+\int_0^ t e^{r_v\Delta(t-s)} F(s)\,ds,\quad t\in[0,T].
$$

The Minkowski inequality, the smoothing property of the heat semigroup and the Sobolev embedding $L^1(\CO)\hookrightarrow H^{\rho-\delta}_2(\CO)$ for $\delta-\rho+\frac d2>d$ give
\begin{align} \label{zweites3}
\lqq{\left\vert\int_0^ t e^{r_v\Delta(t-s)}F(s)\,ds\right\vert_{H^\rho_2} \le\int_0^ t \left\vert e^{r_v\Delta(t-s)}F(s)\right\vert_{H^\rho_2}\, ds
}&
\\\nonumber
 \le& \int_0^ t (t-s)^{-\frac \delta2} \left\vert F(s)\right\vert_{H^{\rho-\delta}_2}\, ds
 \le C(T)\, \left(\int_0^ t  \left\vert F(s)\right\vert_{L^1}^\textkappa\, ds\right)^\frac 1 \textkappa.
 \end{align} 
 Setting
$$F=\phi_\kappa(h(\eta,\xi,\cdot))\eta\xi^2,$$
we obtain by the H\"older inequality  for $\delta/2<1-\frac 1\textkappa $ and $\frac {2\textkappa}{m_0}<1$
\begin{align} \label{zweites.2}
\lqq{\qquad\qquad\int_0^ T \vert F(s)\vert_{L^1}^\mu\, ds
}
&
\\
\le& \sup_{0\le s\le T} \vert\eta(s)\vert_{L^{p^\ast }}^{\mu} \int_0^ T (\phi_\kappa(h(\eta,\xi,t)))^{\mu} \vert \xi(s)\vert_{L^m}^{2\mu}\, ds\le (2\kk)^{\frac{2\mu}{m_0\vardelta}} \sup_{0\le s\le T} \vert\eta(s)\vert_{L^{p^\ast}}^{\mu}.\nonumber
\end{align} 
 Observe, since $\rho<2-\frac 4m-\frac d2$, such a $\delta$ and $\textkappa$ exists.
Taking expectation, we know due to the assumptions on $\eta$, that
the  term can be bounded.
{It remains to calculate the norm in $L^2(0,T;H^{\rho+1}_2(\CO))$.
By standard calculations (i.e. applying the smoothing property and the Young inequality for convolution) we get
for $\frac 32 \ge \frac 1\mu+\frac 1\kappa$ and $\delta \kappa/2<1$
\begin{align*} 
 \left\Vert\int_0^ \cdot e^{r_v\Delta(\cdot-s)}F(s)\,ds\right\Vert_{L^2(0,T;H_2^{\rho+1})}\le C\,  \Vert F\Vert_{L^\mu(0,T;H_2^{\rho+1-\delta})}
.
\end{align*} 
The embedding $L^1(\CO) \hookrightarrow H^{\rho+1-\delta}_2(\CO)$ for $\delta-(\rho+1)>\frac d2$
gives
\begin{align*} 
 \Vert F\Vert_{L^\mu(0,T;H_2^{\rho-1})}\le  \Vert F\Vert_{L^\mu(0,T;L^1)},
 \end{align*} 
 and similarly to before
\begin{align*} 
\EE \left\Vert\int_0^ \cdot e^{r_v\Delta(\cdot-s)}F(s)\,ds\right\Vert^{m_0}_{L^2(0,T;H_2^{\rho+1})}\le  C(\kk)\, \Reins
.
\end{align*} 
}%
\end{proof}

\begin{proposition}\label{positivityv}
Under the conditions of Theorem \ref{propvarational} and the additional condition $v_0(x)\ge 0$ for all $x\in\CO$ and $\eta(t,x)\ge 0$, $\xi(t,x)\ge 0$ for a.e. $(t,x)\in[0,T]\times \CO$, the solution $\wk$
to \eqref{eq:cutoffvv} is nonnegative a.e.
\end{proposition}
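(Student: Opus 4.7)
The plan is to exploit the fact that, once the couple $(\eta,\xi)$ is fixed, equation \eqref{eq:cutoffvv} is \emph{linear} in $v_\kappa$ with a nonnegative source term and linear multiplicative noise, so that positivity can be obtained from a comparison with the trivial solution $w\equiv 0$.

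First, I would verify the structural ingredients. Under the assumption $\eta\ge0$, $\xi\ge0$ $\P\otimes\operatorname{Leb}$-a.e. and since $\phi_\kappa\ge 0$, the source
\[
F(t,x):=\phi_\kappa(h(\eta,\xi,t))\,\eta(t,x)\,\xi^2(t,x)\ge 0\qquad\P\otimes\operatorname{Leb}\text{-a.e.}
\]
Moreover, the Laplacian $r_v\Delta$ with Neumann (or periodic) boundary conditions generates a positivity-preserving analytic semigroup on $L^2(\Ocal)$ (and on each $H^s_2(\Ocal)$ within its domain), and the noise coefficient $v\mapsto \sigma_2 v$ is of linear multiplicative type, hence compatible with sign preservation in the sense required by the stochastic comparison theorem of Kotelenez \cite{Kotelenez1992}, see also \cite{TessitoreZabczyk1998}.

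Second, I would apply the comparison principle with the zero solution. Both $v_\kappa$ and $w\equiv 0$ solve a linear SPDE of the form
\[
d\zeta(t)=\bigl[r_v\Delta\zeta(t)+G(t)\bigr]dt+\sigma_2\zeta(t)\,dW_2(t),
\]
with sources $F\ge 0$ and $0$ and with initial data $v_0\ge 0$ and $0$, respectively. By the comparison results cited above, it follows that $v_\kappa(t,x)\ge w(t,x)=0$ $\P\otimes\operatorname{Leb}$-a.e.

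As an alternative (and as a back-up in case the $\rho<0$ regime requires extra care), I would instead test with a smooth approximation $\beta_\varepsilon$ of $x\mapsto (x^-)^2/2$ satisfying $0\le\beta_\varepsilon'\cdot(-1)\le 1$, $\beta_\varepsilon''\ge 0$ and $\beta_\varepsilon(x)\to (x^-)^2/2$ as $\varepsilon\downarrow 0$. Applying It{\^o}'s formula to $\int_\Ocal \beta_\varepsilon(v_\kappa(t,x))\,dx$ (after a mollification in space to ensure sufficient regularity, justified by \eqref{estimatesol2}), the diffusion term contributes $-r_v\int_\Ocal \beta_\varepsilon''(v_\kappa)|\nabla v_\kappa|^2\,dx\le 0$ via integration by parts with Neumann/periodic boundary conditions; the source term contributes $\int_\Ocal F(t,x)\beta_\varepsilon'(v_\kappa)\,dx\le 0$ since $F\ge 0$ and $\beta_\varepsilon'\le 0$ on $\{v_\kappa<0\}$; the quadratic variation correction produces a term controlled by $C\EE\int_0^t\!\int_\Ocal (v_\kappa^-)^2\,dx\,ds$ by the linear multiplicative structure of $\sigma_2$ (cf.\ Remark \ref{LSSigma}) and by the properties of $\beta_\varepsilon''$. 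Taking expectation kills the martingale part, and sending $\varepsilon\downarrow 0$ yields
\[
\EE\int_\Ocal (v_\kappa^-(t,x))^2\,dx\le C\int_0^t \EE\int_\Ocal (v_\kappa^-(s,x))^2\,dx\,ds,
\]
so that Gronwall's lemma combined with $v_0^-\equiv 0$ forces $v_\kappa^-\equiv 0$, i.e.\ $v_\kappa\ge 0$ a.e.

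The main obstacle is technical rather than conceptual: when $d\in\{2,3\}$ the parameter $\rho$ may be negative, so $v_\kappa(t,\cdot)$ need not be a genuine function a priori, which complicates the direct use of It{\^o}'s formula on the functional $v\mapsto \int_\Ocal \beta_\varepsilon(v)\,dx$. This is overcome either by working with a spatially mollified version $v_\kappa^n:=\rho_n\ast v_\kappa$ and passing to the limit using the bounds of Proposition \ref{propvarational}, or, more economically, by invoking the established comparison framework of \cite{Kotelenez1992,TessitoreZabczyk1998} (as already done in Proposition \ref{positivityu}), which is tailored precisely to linear SPDEs with multiplicative noise of this form.
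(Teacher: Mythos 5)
Your primary argument --- comparison of $v_\kappa$ with the zero solution via the positivity of the heat semigroup together with \cite{TessitoreZabczyk1998} and \cite{Kotelenez1992} --- is exactly the route the paper takes, and your identification of the structural ingredients (nonnegative source $\phi_\kappa(h)\,\eta\,\xi^2$, positivity-preserving $e^{r_v\Delta t}$, linear multiplicative noise) is correct. The additional self-contained It\^o/Gronwall argument you sketch as a backup is a reasonable, more explicit alternative not carried out in the paper, but it is not needed for the claim.
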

\begin{proof}
The heat semigroup, which is generated by the Laplace, maps nonnegative functions to nonnegative functions. In this way we refer to the proof of nonnegativity by Tessitore and Zabczyk \cite{TessitoreZabczyk1998}. The perturbation can be incorporated by comparison results, see \cite{Kotelenez1992}.
\end{proof}

\begin{proposition}\label{semigroup}
Assume that the Hypotheses of Theorem \ref{mainresult} hold.
In addition let us assume that we have for the couple $(\eta,\xi)\in\subX\subset\CM_\MA^{2,m_0}(\BY,\mathbb{Z})$.
Let $v_\kappa$ be a solution to
$$
d \wk(t)=[r_v\Delta \wk(t)+ \phi_\kk(h(\eta,\xi,t))\, \eta(t)\xi^2(t)]\,dt+\sigma_2 \wk(t)\, dW_2(t),\quad \wk(0)=v_0\in H^{-\delta_1}_\ppp(\CO).
$$
Then,
\begin{enumerate}[(i)]
\item
there exists a number $r_0>0$ and a constant $\tilde{C}_2(T)>0$  such that for any $r\le r_0$,
we have for all $\kk\in\NN$
\renewcommand{\LLm}{{m_0}}
\begin{align*} 
\EE\left\Vert \wk\right\Vert_{L^{m_0}(0,T;H^{r}_{\ppp})}^{\LLm}
\le& \tilde C_2(T)\bigg\{ \EE \vert v_0\vert^{\LLm}_{H^{-\delta_0}_{\ppp}} +  4\kk
\bigg\}.
\end{align*} 
\item
there exists a number $\alpha_0>0$ and a constant $C=C(T)>0$ such that for any $\alpha \le \alpha_0$ we have for all $\kk\in\NN$
\begin{align*} 
\EE\left\Vert \wk\right\Vert^{\LLm}_{\mathbb{W}_{m_0}^\alpha (0,T;H^{-\sigma}_m)}
\le& C(T)\bigg\{ \EE \vert v_0\vert^{\LLm}_{H^{-\delta_0}_{\ppp}} + 4\kk
\bigg\}.
\end{align*} 

\end{enumerate}

\end{proposition}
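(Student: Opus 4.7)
The strategy is to pass to the mild formulation
\[
v_\kappa(t) = e^{r_v t \Delta} v_0 + \int_0^t e^{r_v (t-s)\Delta} F_\kappa(s)\,ds + \int_0^t e^{r_v (t-s)\Delta} \sigma_2 v_\kappa(s)\,dW_2(s) =: J_1(t) + J_2(t) + J_3(t),
\]
where $F_\kappa(s):=\phi_\kappa(h(\eta,\xi,s))\eta(s)\xi^2(s)$, and to estimate each $J_i$ separately in the respective target spaces, using throughout the smoothing property of the analytic heat semigroup on $L^m(\Ocal)$ and the $\gamma$-radonifying bound $|\sigma_2 v|_{\gamma(H_2,L^m)}\le C(1+|v|_{L^m})$ from Remark \ref{LSSigma}. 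Note that the cutoff enforces $\int_0^T|\eta|_{L^{\gamma+1}}^{\gamma+1}\,ds\le(2\kappa)^{1/\nu}$ and $\int_0^T|\xi|_{L^m}^{m_0}\,ds\le(2\kappa)^{1/\nu}$ on $\{\phi_\kappa(h)\neq 0\}$, which is the mechanism through which the factor $4\kappa$ on the right-hand side arises.

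For part (i), the initial-datum term satisfies the smoothing bound $|e^{r_v t\Delta}v_0|_{H^r_m}\le Ct^{-(r+\delta_0)/2}|v_0|_{H^{-\delta_0}_m}$, yielding a finite $L^{m_0}(0,T;H^r_m)$-norm whenever $(r+\delta_0)m_0<2$, which is compatible with Hypothesis \ref{init} for all sufficiently small $r_0>0$. For $J_2$, Hölder's inequality and the embedding $L^{p^\ast}\cdot L^m\cdot L^m\hookrightarrow L^1$ (exactly as in the proof of Proposition \ref{propvarational}) give $\EE\|F_\kappa\|^{m_0}_{L^\mu(0,T;L^1)}\le C\kappa$ for some suitable $\mu$; combined with Young's convolution inequality and the Sobolev embedding $L^1(\Ocal)\hookrightarrow H^{r-\tilde\delta}_m(\Ocal)$ for $\tilde\delta>d(1-1/m)+r$, this produces the desired bound on $J_2$. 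The stochastic convolution $J_3$ is controlled by a maximal inequality for stochastic convolutions in the UMD space $L^m$ (via the $\gamma$-radonifying framework of Brzeźniak, or the van Neerven--Veraar--Weis theory): for suitable $\beta\in(0,1/2)$,
\[
\EE\|J_3\|^{m_0}_{L^{m_0}(0,T;H^r_m)}\le C\,\EE\int_0^T|\sigma_2 v_\kappa(s)|^{m_0}_{\gamma(H_2,H^{r-\beta}_m)}\,ds\le C\int_0^T\EE|v_\kappa(s)|^{m_0}_{L^m}\,ds,
\]
which a Gronwall argument, together with the variational estimate from Proposition \ref{propvarational} and the embedding $\BH\hookrightarrow L^{m_0}(0,T;L^m)$ of Remark \ref{variationalremark}, closes.

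For part (ii), I would again treat the three terms separately. Analyticity of the heat semigroup yields Hölder-in-time regularity of $J_1$ with values in $H^{-\sigma}_m$, which embeds into $\mathbb{W}^\alpha_{m_0}(0,T;H^{-\sigma}_m)$ for small $\alpha$, and a Da Prato--Kwapień--Zabczyk factorization argument applied to $J_2$ gives the required fractional regularity of the drift convolution with explicit $\kappa$-dependence. For $J_3$ I would invoke the UMD-space maximal regularity for stochastic convolutions, asserting that $t\mapsto\int_0^t e^{r_v(t-s)\Delta}G(s)\,dW_2(s)$ lies in $L^{m_0}(\Omega;\mathbb{W}^\alpha_{m_0}(0,T;H^{-\sigma}_m))$ for any $\alpha<1/2-1/m_0$, provided $\sigma>0$ absorbs the smoothing lost by fractional time differentiation; applying this with $G=\sigma_2 v_\kappa$ and using Remark \ref{LSSigma} together with the $L^m$-control from part (i) yields the estimate. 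The principal obstacle throughout is the stochastic convolution analysis in the non-Hilbertian $L^m$ setting, which forces the use of UMD-space stochastic integration and a careful simultaneous matching of the parameters $(r,\alpha,\sigma,\beta,\delta_0,\nu,m,m_0)$ so that all smoothing, Sobolev embedding, and maximal-regularity steps apply while keeping the Gronwall loop closed.
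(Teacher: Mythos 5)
Your proposal follows essentially the same route as the paper's proof: mild formulation split into initial-datum, drift-convolution, and stochastic-convolution terms; analytic semigroup smoothing plus the Sobolev embedding $L^1\hookrightarrow H^{-\delta}_m$ and Young's convolution inequality to bound the drift term with $\kappa$-dependence coming from the cutoff; and Brzeźniak's $\gamma$-radonifying stochastic convolution estimate (the paper invokes \cite[Corollary 3.5(ii)]{brzezniak}) in the non-Hilbert $L^m$ setting, closed by a self-referencing bound using the $\Hcal_\rho$-control of Proposition \ref{propvarational} — the paper closes via interpolation between $L^m$ and $H^{\tilde\rho}_2$ plus an $\varepsilon$-absorption in place of your Gronwall step, but this is a cosmetic difference. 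For part (ii) the paper decomposes $v_\kappa(t)-v_\kappa(s)$ via the semigroup flow property, so that the term $(e^{r_v\Delta(t-s)}-\mathrm{Id})v_\kappa(s)$ is estimated using part (i) directly, whereas you propose to estimate the fractional Sobolev time-regularity of each mild-form term from scratch; both are viable, but you should be aware you have not actually carried out the parameter matching for $(r,\alpha,\sigma,\beta,\delta_0,\nu,m,m_0)$ that you correctly flag as the crux, whereas the paper does so in detail.
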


\begin{proof}
\renewcommand{\LLm}{{m_0}}
We start to show (i).
First, we get by the analyticity of the semigroup for $\delta_0,\delta_1\ge 0$
\begin{align*} 
\EE \left\Vert \wk \right\Vert_{L^{m_0}(0,T;H^{r}_{\ppp})}^{\LLm}
\le&\left[ \int_0^ T  \left\{  t^{-2{m_0}\delta_0} \EE \vert v_0\vert_{H^{-\delta_0}_{\ppp}}^{m_0}\right.\right.\\
&+\EE \left\vert\int_0^ t e^{r_v\Delta (t-s)}\phi_\kk(h(\eta,\xi,s))\,  \eta(s) \xi^ 2(s)\, ds\right\vert_{H^{r}_{\ppp}}^{m_0}
\\&+ \left.\left.\EE \left\vert\int_0^ t e^{r_v\Delta (t-s)} \sigma_2 \wk (s) dW_2(s)\right\vert_{H^{r}_{\ppp}}^{m_0}\,\right\} \,dt\right]
\\
=:& (I + II+III).
\end{align*} 
Since, by the hypotheses,
$$2{m_0}\delta_0<1,
$$
the first term, i.e.\ $I$ is bounded. In particular, we have
\begin{align} \label{zusammen1}
 I\le T^{\frac q{m_0}(1-2{m_0}\delta_0)} \EE \vert v_0\vert^{\LLm}_{H^{-\delta_0}_{\ppp}}.
 \end{align} 

Let us continue with the second term. The smoothing property of the semigroup gives for any $\delta_1\ge 0$
$$
II\le \EE\left[\int_0^ T \left(\int_0^ t (t-s)^ {-\frac 12(r+\delta_1)}\stopp\,  \left\vert \eta(s) \xi^ 2(s)\right\vert_{H^{-\delta_1}_{\ppp}}\, ds\right)^{m_0}\, dt\right].
$$
Using the  Sobolev embedding $L^ 1(\CO)\hookrightarrow  H^{-\delta_1}_{\ppp}(\CO)$, where $\delta_1\ge d(1-\frac 1m)$,
we get
\begin{align*} 
II
\le&\EE\left[\int_0^ T  \left( \int_0^ t (t-s)^ {-\frac 12(r+\delta_1)}\stopp\, \left\vert \eta(s) \xi^ 2(s)\right\vert_{L^\sig}\, ds\right)^{m_0}\, dt\right].
\end{align*} 
Supposing $l\frac { \delta_1+r_0}2<1$
the Young inequality for convolutions gives for
\begin{align} 
\label{Yinq}
\frac 1{m_0} +1=\frac 1l+\frac 1\textkappa\quad \mbox{and} \quad \beta_0=\frac 1l-\frac 12(r+\delta_1)
\end{align} 
and therefore,
\begin{align} \label{tocontinue}
II \le&
C_0 T^{\beta_0\frac \LLm{m_0}}\, \EE\left(\int_0^ T \left\vert \stopp\eta(s) \xi ^ 2(s)\right\vert_{L^\sig}^\textkappa \, ds\right)^ \frac \LLm\textkappa.
\end{align} 
Which can be bounded by H\"older inequality as follows,
\begin{align*} 
\lqq{ \int_0^ T \left\vert \stopp \eta(s) \xi ^ 2(s)\right\vert_{L^\sig}^\textkappa \, ds}
&
\\\le& \sup_{0\le s\le T} \vert\eta(s)\vert_{L^{p^\ast}}^{\mu}\int_0^ T \stopp\vert\xi(s)\vert_{L^m}^{m_0}\, ds\le C(\kk) \Reins.
\end{align*}

Next, let us investigate $III$. We treat the stochastic term by applying \cite[Corollary 3.5 (ii)]{brzezniak}, from which it follows for $\tilde \sigma+r<1$ and $\beta>0$
$$
\EE\left[\int_0^ T  \left\vert\int_0^ t e^{r_v\Delta (t-s)} \sigma_2 \wk (s) \, dW_2(s)\right\vert_{H^{r}_m}^{\LLm}\, dt \right]^\frac q{m_0}\le T^\beta \EE\left[ \int_0^ T \vert\wk (t)\vert_{H^{-\tilde \sigma}_m}^{m_0}\, dt \right].
$$
Due to the Sobolev embedding and interpolation, we know that  whenever
\begin{equation}\label{eq:rho}
\tilde \sigma>\frac d2-\frac dm-\tilde \rho,
\end{equation}
then there exists some $\theta\in(0,1)$ such that
$$
\vert\wk (t)\vert_{H^{-\tilde \sigma}_m}\le \vert\wk (t)\vert_{L^{m}} ^\theta \vert\wk (t)\vert_{H^{\tilde\rho}_{m}}^{1-\theta}\le C\,
\vert\wk (t)\vert_{L^{m}} ^\theta\vert\wk (t)\vert_{H^{\tilde\rho}_2}^{1-\theta}.
$$
Thus, if $\tilde \rho$ satisfies \eqref{eq:rho}, we get that
\begin{align*} 
\EE \left[\int_0^ T \vert\wk (t)\vert_{H^{-\tilde \sigma}_m}^{m_0}\, dt \right]^\frac q{m_0} \le& C\EE \left\{ \left[ \int_0^ T
 \vert\wk (t)\vert_{H^{\tilde \rho}_2}^{m_0}\, dt \right]^{1-\theta}\left[\int_0^ T \vert\wk (t)\vert_{L^m}^{ m_0}\, dt\right]^\theta \right\} ^\frac q{m_0}
\\
\le&C  \EE\left[ \int_0^ T
 \vert\wk (t)\vert_{H^{\tilde \rho}_2}^{m_0}\, dt \right]^{(1-\theta)\frac q{m_0}}\left[\EE \int_0^ T \vert\wk (t)\vert_{L^m}^{ m_0}\, dt\right]^{\theta\frac q{m_0}}
 \\ \le& C(\ep ) \EE\left[ \int_0^ T
 \vert\wk (t)\vert_{H^{\tilde \rho}_2}^{m_0}\, dt \right]^{\frac q{m_0}}+\ep   \EE \Vert\wk \Vert_{L^{m_0}(0,T;L^m)}^{q}.
\end{align*} 
Collecting everything together, choosing $\ep>0$ sufficiently small and subtracting $\ep   \EE \Vert\wk \Vert_{L^{m_0}(0,T;L^m)}^{ m_0}$
on both sides, (i) follows.

The rest of the proof is devoted to item (ii).
Note, that for $s<t$
\begin{align*} 
\lqq{ \wk (t)-\wk (s) =
(e^{r_v\Delta (t-s)}-\operatorname{Id}) \wk (s)} &
\\&{} + \int_s^t  e^{r_v\Delta(t-\tilde s )} \stopp\eta(\tilde s )\xi^2(\tilde s )\, d\tilde s +\int_0^s e^{r_v\Delta(t-\tilde s)}\wk (\tilde s)\, dW_2(\tilde s) .
\end{align*} 
Substituting it in the definition of $\WW^{\alpha}_{m_0} ([0,T];H^{-\sigma}_m(\CO))$, see Appendix \ref{dbouley-space}, we can write
\begin{align*} 
\lqq{ \int_0^T  \int_0^T  \frac{{\vert\wk (t)-\wk (s)\vert_{H^{-\sigma}_{\ppp}} ^ {m_0}}}{{ \vert t-s\vert ^ {1+\alpha {m_0}}}}\,ds\,dt}
&
\\
\le&
2\int_0^T  \int_0^t  \frac{\vert\wk (t)-\wk (s)\vert_{H^{-\sigma}_{\ppp}} ^ {m_0}}{\vert t-s\vert ^ {1+\alpha {m_0}}}\,ds\,dt
\\
\le&
2\int_0^T  \int_0^t  \frac{\vert[e^{r_v\Delta(t-s)}-\operatorname{Id}]\wk (s)\vert_{H^{-\sigma}_{\ppp}} ^ {m_0}}{ \vert t-s\vert ^ {1+\alpha {m_0}}}\,ds\,dt
\\&{}+
2\int_0^T  \int_0^t  \frac{\vert\int_{\tilde s}^t  e^{r_v\Delta(t- s  ) }\stopp\eta( s  )\xi^2( s  )\, d s   \vert_{H^{-\sigma}_{\ppp}} ^ {m_0}}{ \vert t-\tilde s\vert ^ {1+\alpha {m_0}}}\,d\tilde s\,dt
\\
&{}+
2\int_0^T  \int_0^t  \frac{\vert\int_s^t  e^{r_v\Delta(t-\tilde s  )} u(\tilde s)\, dW(\tilde s)   \vert_{H^{-\sigma}_{\ppp}} ^ {m_0}}{ \vert t-s\vert ^ {1+\alpha {m_0}}}\,ds\,dt
=: J+JJ+JJJ.
\end{align*} 
First, let us note that by part (i) we know that there exists a number $r>0$ and a constant $\tilde C(T)>0$ such that
\begin{align} \label{estimate_01}
\EE\left[\int_0^ T \vert\wk (s)\vert^{m_0}_{H^r_{\ppp}}\, ds\right]^\frac q{m_0} \le&
 \tilde C(T)\bigg\{ \EE \vert v_0\vert^{\LLm}_{H^{-\delta_0}_{\ppp}} +  4\kk  \bigg\}
.\end{align} 
Let us  consider the  term $J$.
Since
$$\vert[e^{r_v\Delta (t-s)}-\operatorname{Id}] x\vert_{H^{-\sigma}_{\ppp}}=C\int_0^{t-s} \tilde s^{2(r+\sigma)-1}\, d\tilde s \vert x\vert_{H^r_{\ppp}}\le C (t-s)^{2(r+\sigma)}\vert x\vert_{H^r_{\ppp}},$$ we can infer
\begin{align*} 
J \le&
2\int_0^T  \int_0^t \frac{(t-s)^ {2(r+\sigma){m_0}}\vert\wk (s)\vert_{H^r_\ppp} ^ {m_0}}{ \vert t-s\vert ^ {1+\alpha {m_0}}}\,ds\,dt
.
\end{align*} 
The Young inequality for convolutions (with all exponents equal to $1$) gives
\begin{align*} 
J \le& \left(\int_0^T s^{{m_0}2(r+\delta)-1-\alpha {m_0}}\, ds\right) \cdot \left(\int_0^T \vert\wk (s)\vert^ {{m_0}}_{H^r_{\ppp}}\, ds\right).
\end{align*} 
In particular, since $2(r+\delta)>\alpha$, the right hand side is bounded by estimate \eqref{estimate_01}.

Now, we consider the second term $JJ$.

Jensen's inequality gives for any $q>1$, $q':=\frac{q}{q-1}$,
\begin{align*} 
\lqq{  \left\vert\int_{\tilde s}^ t e^{r_v\Delta(t-s)}\stopp \eta(s) \xi ^ 2(s)\, ds\right\vert_{H^{-\sigma}_{\ppp}}^{q}
}&\\
\le&
 (t-\tilde s)^ {\frac {q}{q'}} \left(\int_{\tilde s}^ t \left\vert \stopp \eta ( s ) \xi^ 2( s)\right\vert^{q}_{H^{-\sigma}_{\ppp}}\, d s\right).
\end{align*} 
Hence,
\begin{align*} 
JJ\le&
\int_0^T  \int_0^t  \frac{\vert\int_{\tilde s}^t e^{r_v\Delta(t-s ) }\stopp \eta( s )\xi^2( s )\, d s  \vert_{H^{-\sigma}_{\ppp}} ^ {m_0}}{ \vert t-\tilde s\vert ^ {1+\alpha {m_0}}}\,d\tilde s\,dt
\\
\le&
\int_0^T  \int_0^t  \frac{(t-\tilde s)^{\frac {m_0} {{m_0}'}}  \int_{\tilde s}^t\stopp  \vert\eta( s )\xi^2( s )\vert^{m_0} _{H^{-\sigma}_{\ppp}}\, d s  }{ \vert t-\tilde s\vert ^ {1+\alpha {m_0}}}\,d\tilde s\,dt
\\
\le&
\left[ \int_0^T  \int_0^t  (t- s)^{\frac {m_0}{{m_0}'}-1-\alpha {m_0} }\,ds\,dt \right] \,\left[\int_0^T  \stopp \vert\eta ( s )\xi^2( s )\vert_{H^{-\sigma}_{\ppp}}^{m_0} \, d s \right].
\end{align*} 
Taking into account that $\alpha<\frac 1 {{m_0}'}$,
integration and the Sobolev embedding gives
\begin{align*} 
JJ \le&
\left[ \int_0^T   t^{\frac {m_0}{m_0'}-\alpha {m_0} }\,dt  \right]\int_0^T  \stopp \vert\eta(s)\xi^2(s)\vert^{m_0}_{H^{-\tilde r }_{\ppp}} \, ds
\\
\le& T   ^{\frac {m_0}{m_0'}+1-\alpha {m_0} }  \int_0^T  \stopp \vert\eta(s)\xi^2(s)\vert^{m_0} _{L^1}\, ds\le  4 T   ^{\frac {m_0}{m_0'}+1-\alpha {m_0} } C( \kk)\Reins.
\end{align*} 
It remains to give an estimate to the second term. We can show the assertion
by the same computations as in the part of the proof for (i), i.e.\ starting at  \eqref{tocontinue}.

Next, let us investigate $JJJ$. Here, again by \cite[Corollary 3.5 (ii)]{brzezniak}, we get for some $\beta>0$ that
$$
\EE \left\vert\int_s^ t  e^{r_v\Delta (t-\tilde s)} \sigma_2 \wk (\tilde s)\, dW_2(\tilde s)\right\vert^{ \LLm}_{H^{-\sigma}_m} \le (t-s)^\beta\,  \EE \left( \int_s^ t \vert\wk (\tilde s )\vert^{m_0}_{H^{-\tilde \sigma}_m}\, d\tilde s\right).
$$
In this way we get
\begin{align*} 
\EE \vert JJJ\vert\le&
2\int_0^T  \int_0^t  \frac{\vert t-s\vert^{m_0-2}}{ \vert t-s\vert ^ {1+\alpha {m_0}}} \EE \int_s^ t \vert\wk (\tilde s )\vert^{m_0}_{H^{-\tilde \sigma}_m}\, d\tilde s\,ds\,dt
\\
\le&
2\int_0^T  \int_0^t  \vert t-s\vert^{m_0(1-\alpha)-3}\left( \EE \int_s^ t \vert\wk (\tilde s )\vert^{m_0}_{H^{-\tilde \sigma}_m}\, d\tilde s\right) ds\,dt.
\end{align*} 
Applying the H\"older inequality, we show that
\begin{align*} 
\EE \vert JJJ\vert\le&
C(T)\, \EE \left(\int_0^ T \vert\wk ( s )\vert^{m_0}_{H^{-\tilde \sigma}_m}\, ds\right).
\end{align*} 
Collecting everything together gives (ii).
\end{proof}

\newcommand{\qst}{{q^\ast}}
\renewcommand{\qst}{{\alpha }}

\begin{tproposition}\label{techpropo}
Let us assume that $(\eta,\xi)\in\subX\subset\CM_\MA^{2,m_0}(\BY,\mathbb{Z})$.
Then, for any $\qst\in[1,\frac {m_0}2)$, $q^\ast\in(1,m_0]$, and $\kappa\in \NN$
 there exists some real numbers $\delta_1,\delta_2\in(0,1]$ and  constants $C_1(\kappa,\Reins),C_2(\kappa,\Reins)>0$ such that
we have
\begin{align*} 
\lqq{\EE \left\Vert  \eta _1\xi _1^2-\eta _2\xi ^2_2\right\Vert_{L^\qst(0,T;L^{1 })}^{q^\ast}}
&\\
\le& C_1(\kappa,\Reins) \times
 \left(\left(\EE \Vert\xi _1-\xi _2\Vert_{\BZ}^{m_0}\right)^ {\delta_1}+ \left(\EE\Vert\eta _1-\eta _2\Vert_{L^{\gamma+1}(0,T;L^{\gamma+1})}^{\gamma+1}\right)^{\delta_2}\right)
\\&{} +C_2(\kappa,\Reins) \times
\left(\EE \Vert\xi _1-\xi _2\Vert_{\BZ}^{m_0}+ \EE\Vert\eta _1-\eta _2\Vert_{L^{\gamma+1}(0,T;L^{\gamma+1})}^{\gamma+1}\right)
.
\end{align*} 
\end{tproposition}
\begin{proof}
Firstly,
 observe   that due to the cutoff function, at integrating with respect to the time we have to
take into account in which interval $s$ belongs to.
Let $\tau_j^u:=\inf_{s>0}\{h(\eta_j,\xi_j,s)\ge \kk\}$ and
$\tau_j^o:=\inf_{s>0}\{h(\eta_j,\xi_j,s)\ge 2\kappa\}$, $j=1,2$.
Distinguishing the different cases, and noting that $\operatorname{Lip}(\phi_\kappa)\le 2\kappa^{-1}$, we obtain
\begin{align*} 
\lqq{
 \left\vert\stose  \eta _1(s)\xi _1^2(s)-\stosz\eta _2(s)\xi ^2_2(s) \right\vert_{L^1}^\alpha \phantom{\Bigg\vert}
}
\\
\le&
\begin{cases}
\vert \eta_1(s)\xi_1^2(s)-\eta_2(s)\xi_2^2(s))\vert_{L^1}^\alpha
&\mbox{ if } 0\le s\le T\wedge \ul{\tau}^u,
\\[2ex]  \left\vert \eta _1(s)\xi _1^2(s)-\eta _2(s)\xi ^2_2(s) \right\vert_{L^1}^\alpha \operatorname{Lip}(\phi_\kappa)^\alpha\\
\times\Big(  \Vert\eta_1-\eta_2\Vert_{L^{\gamma+1}(0,T;L^{\gamma+1})}^\nu+  \Vert\xi_1-\xi_2\Vert_{L^{m_0}(0,T;L^m)}^\nu\Big)
 &
\\[2ex]
\quad \quad{} +2\vert\eta_1(s)\xi_1^2(s)-\eta_2(s)\xi_2^2(s))\vert_{L^1}^{\alpha}&
\mbox{ if }T\wedge\ul{\tau}^u < s \le T\wedge \ol{\tau}^o,
\\[2ex]
0 &\mbox{ if }T\ge s>T\wedge\ol{\tau}^o,
\end{cases}
\end{align*} 
where $\ul{\tau}^u:= \min(\tau_1^u,\tau_2^u)$ and
$\ol{\tau}^o:=\max(\tau_1^o,\tau_2^o)$. Therefore, we may integrate over $[0,T]$.

Secondly, observe that for any $n\in\NN$ and any $a,b\ge 0$, we have
\begin{align*} 
\vert a-b\vert \le&\left\vert \sum_{k=1}^n a^ \frac {k-1}n(a^\frac 1n-b^ \frac 1n) b^ \frac {n-k}n\right\vert
\nonumber
\\
\le&\vert a-b\vert^ \frac 1n \sum_{k=1}^n a^ \frac {k-1}n b^ \frac {n-k}n
\le C \vert a-b\vert^ \frac 1n  (n-1) \left( a^ \frac {n-1} n +  b^ \frac {n-1} n \right),
\end{align*} 
where the last inequality follows from an application of Young's inequality.
The difference can be split in the following way for $s\in [0, \ol{\tau}^o\wedge T]$
\begin{align*} 
\lqq{ \left\vert \xi ^2_1(s,x)\eta _1(s,x)-\xi _2^2(s,x)\eta _2(s,x)\right\vert }
&
\\\le&\vert\xi _1(s,x)-\xi _2(s,x)\vert\xi _1(s,x)+\xi _2(s,x)\vert\vert\eta _1(s,x)\vert  + \vert\xi _1^2(s,x)+\xi _2^2(s,x)\vert\vert\eta _1(s,x)-\eta _2(s,x)\vert.
\end{align*} 
Let $n\in\NN$ be that large such that
$$
\frac 1{n(\gamma+1)}+\frac {n-1}n\cdot\frac 1{p^\ast}+\frac 1m\le 1,\quad \frac 1{n(\gamma+1)}+\frac 1{m_0}\le \frac 1 \alpha,\quad\mbox{and}\quad  \frac 1{n(\gamma+1)}+\frac 1{p^\ast}\le \frac1\alpha.
$$
Therefore, we can write
\begin{align*} 
\lqq{\int_0^ {\ol{\tau}^o\wedge T} \left\vert \eta _1(s)\xi _1^2(s)-\eta _2(s)\xi ^2_2(s)\right\vert_{L^{1 }}^\qst \,ds}
&
\\
\le& \int_0^  {\ol{\tau}^o\wedge T}  \left( \left\vert \eta_1(s)-\eta_2(s)\right\vert_{L^{\gamma+1}}^\frac 1 n  \left\vert \eta_1(s)-\eta_2(s)\right\vert_{L^{p^\ast}}^\frac {n-1}n
\left\vert \xi ^2_1(s)\right\vert_{L^m }\right)^\qst \, ds
\\
\le& \left( \int_0^  {\ol{\tau}^o\wedge T}  \left\vert \eta_1(s)-\eta_2(s)\right\vert^{\gamma+1}_{L^{\gamma+1}}\, ds \right)^{\frac \alpha{(\gamma+1) n}}\\
&\sup_{0\le s\le T} \left\vert \eta_1(s)+\eta_2(s)\right\vert_{L^{p^\ast}}^{\alpha \frac {n-1}n}
\left( \int_0^  {\ol{\tau}^o\wedge T} \left\vert \xi ^2_1(s)\right\vert_{L^m }^{m_0} \, ds\right)^\frac \alpha{m_0}.
\end{align*} 
Taking expectation gives
\begin{align*} 
\lqq{\EE\left\{  \int_0^ {\ol{\tau}^o\wedge T} \left\vert \eta _1(s)\xi _1^2(s)-\eta _2(s)\xi ^2_2(s)\right\vert_{L^{1 }}^\qst \,ds\right\}^{q^\ast}}
&
\\
\le& C(\kappa)\, \left\{ \EE  \int_0^  {\ol{\tau}^o\wedge T}  \left\vert \eta_1(s)-\eta_2(s)\right\vert^{\gamma+1}_{L^{\gamma+1}}\, ds \right\}^{\frac {{q^\ast}\alpha}{(\gamma+1) n}}
\left\{ \EE \sup_{0\le s\le T} \left\vert \eta_1(s)+\eta_2(s)\right\vert_{L^{p^\ast}}^{p_0^\ast}\right\}^{\frac{{q^\ast} \alpha}{p_0^\ast}\frac {n-1}n}
.
\end{align*} 
\noindent
Similarly we get by the H\"older inequality
\begin{align*} 
 \left\vert   \xi ^2_1(s)\eta _2(s)-\xi _2^2\eta _2(s) \right\vert_{L^1 }\le \left\vert  \xi _1(s)-\xi _2(s)\right\vert_{L^m}
\left\vert\xi _1(s)+\xi _2(s)\right\vert_{L^m} \vert\eta _2(s)\vert_{L^{p^\ast}}
.
\end{align*} 
Integration over time gives
\begin{align*} 
\lqq{\int_0^ {\ol{\tau}^o\wedge T}  \left\vert   \xi ^2_1(s)\eta _2(s)-\xi _2^2\eta _2(s) \right\vert_{L^1 }^\qst \,ds}
&
\\
\le&\sup_{0\le s\le T} \vert\eta _2(s)\vert^\alpha_{L^{p^\ast}}
\, \left(\int_0^ {\ol{\tau}^o\wedge T} \left\vert  \xi _1(s)-\xi _2(s)\right\vert_{L^m}^{m_0}\,ds\right)^\frac \alpha{m_0}
\left(\int_0^ {\ol{\tau}^o\wedge T}  \left\vert\xi _1(s)+\xi _2(s)\right\vert_{L^m} ^{m_0}\,ds\right)^\frac \alpha{m_0} .
\end{align*} 
Taking expectation we get
\begin{align*} 
\lqq{\left\{\EE \int_0^ {\ol{\tau}^o\wedge T}  \left\vert   \xi ^2_1(s)\eta _2(s)-\xi _2^2\eta _2(s) \right\vert_{L^1 }^\qst \,ds\right\}^{q^\ast}}
&
\\
\le&C(\kappa) \left\{ \EE\sup_{0\le s\le T} \vert\eta _2(s)\vert_{L^{p^\ast}}^{p^\ast_0}\right\}^\frac {{q^\ast}\alpha}{p_0^\ast}
\, \left\{ \EE \int_0^ {\ol{\tau}^o\wedge T} \left\vert  \xi _1(s)-\xi _2(s)\right\vert_{L^m}^{m_0}\,ds\right\} ^\frac {{q^\ast}\alpha}{m_0}
.\end{align*} 
Finally, take into account that
\begin{align*} 
\lqq{ \EE\Bigg[\left( \int_0^{\ol{\tau}^o\wedge T}
  \left\vert \eta _1(s)\xi _1^2(s)-\eta _2(s)\xi ^2_2(s) \right\vert_{L^1}^\alpha\, ds\right)
  }&
  \\
&{}\times \big(  \Vert\eta_1-\eta_2\Vert_{L^{\gamma+1}(0,T;L^{\gamma+1})}^\nu+\Vert\xi_1-\xi_2\Vert_{L^{m_0}(0,T;L^m)}^\nu\big)\Bigg]^{q^\ast}
\\
\le& \EE \Bigg[\left(\sum_{j=1,2}\sup_{0\le s\le T}
\vert\eta_j(s)\vert_{L^{p^\ast}} \int_0^{\ol{\tau}^o\wedge T}\vert\xi_j(s)\vert_{L^m}^{m_0}\, ds\right)
  \\
&{}\times \big(  \Vert\eta_1-\eta_2\Vert_{L^{\gamma+1}(0,T;L^{\gamma+1})}^\nu+\Vert\xi_1-\xi_2\Vert_{L^{m_0}(0,T;L^m)}^\nu\big)\Bigg]^{q^\ast}
\\
\le& C(\kappa)\, \left\{ \EE \sum_{j=1,2}\sup_{0\le s\le T}
\vert\eta_j(s)\vert_{L^{p^\ast}}^{p_0^\ast}\right\}^\frac {q^\ast}{p_0^\ast}
  \\
&{}\times\Big( \big(  \EE \Vert\eta_1-\eta_2\Vert_{L^{\gamma+1}(0,T;L^{\gamma+1})}^{\gamma+1} \big)^\frac {\nu q^\ast}{\gamma+1}+
\big(\EE \Vert\xi_1-\xi_2\Vert_{L^{m_0}(0,T;L^m)}^{m_0}\big)^\frac {\nu q^\ast}{m_0}
\Bigg]
,
\end{align*} 
the assertion is shown.\end{proof}

The next proposition gives the continuity property of the operator $\Vcal_{\kappa,\Afrak}$.

\begin{proposition}\label{semigroup_continuous}
Let $(\eta_1,\xi_1)\in\subX\subset\CM_\MA^{2,m_0}(\BY,\mathbb{Z})$ and $(\eta_2,\xi_2)\in\subX\subset\CM_\MA^{2,m_0}(\BY,\mathbb{Z})$.
Let $v_\kappa^{(1)}$ and $v_\kappa^{(2)}$ be the solutions to
$$
 d \wk ^{(j)}(t)=[r_v\Delta \wk ^{(j)}(t)+ \phi_\kk(h(\eta_j,\xi_j,t)\,\eta_j(t)\xi_j^2(t)]\,dt+\sigma_2 v_\kappa^{(j)}(t)\,dW_2(t),\,\, t\in(0,T],
$$
with $  \wk ^{(j)}(0)=v_0, j=1,2$.
Then, under the assumptions of Theorem \ref{mainresult} there exists a constant $C=C(\Reins,\kappa)\,>0$ and real numbers $\delta_1,\delta_2>0$ such that
\renewcommand{\LLm}{{m_0}}
\begin{align*} 
\lqq{ \EE\left\Vert \wk ^{(1)}-\wk ^{(2)}\right\Vert^\LLm_{L^{m_0}(0,T;L^m)} }
&\\\nonumber
\le&
C(\Reins,\kappa)\,\left\{  \left[\EE \left\Vert \xi_1-\xi_2\right\Vert^{\LLm}_{L^{m_0}(0,T;L^m)}\right]^{\delta_1}
+ \left[\EE\left\Vert\eta_1-\eta_2\right\Vert^{\gamma+1}_{L^{\gamma+1}(0,T ;L^{\gamma+1})}  \right]^ {\delta_2}
\right\} \,.
\end{align*} 
\end{proposition}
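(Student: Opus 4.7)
The plan is to work with the mild formulation of the difference $w := v_\kappa^{(1)} - v_\kappa^{(2)}$, which by linearity of the multiplicative noise $\sigma_2$ and of the heat semigroup satisfies $w(0)=0$ and
\[
w(t) = \int_0^t e^{r_v \Delta (t-s)} [F_1(s) - F_2(s)]\, ds + \int_0^t e^{r_v \Delta (t-s)} \sigma_2 w(s)\, dW_2(s),
\]
where $F_j(s) := \phi_\kappa(h(\eta_j,\xi_j,s))\, \eta_j(s)\, \xi_j^2(s)$ for $j=1,2$. The $L^{m_0}(0,T;L^m)$-norm of $w$ splits accordingly into a deterministic and a stochastic convolution, which I treat separately, reusing essentially the same exponent balancing already performed in the proofs of Proposition \ref{propvarational} and Proposition \ref{semigroup}.

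For the deterministic convolution, I would mimic the computation in Proposition \ref{propvarational}: combine the Minkowski inequality, the smoothing property of the heat semigroup from $H^{-\delta_1}_m(\Ocal)$ to $L^m(\Ocal)$ for some $\delta_1<2$, and the Sobolev embedding $L^1(\Ocal) \hookrightarrow H^{-\delta_1}_m(\Ocal)$ valid when $\delta_1 > d(1-1/m)$. Young's inequality for convolutions then reduces matters to bounding $\|F_1 - F_2\|_{L^\alpha(0,T;L^1)}^{q^\ast}$ in expectation, for some $\alpha \in [1,m_0/2)$ and $q^\ast \in (1,m_0]$ satisfying the balance condition $\tfrac{1}{m_0}+1 = \tfrac{1}{l}+\tfrac{1}{\alpha}$ with $\tfrac{l}{2}\delta_1 < 1$. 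At this point I apply Technical Proposition \ref{techpropo}, which delivers precisely the desired fractional bound in terms of $\EE\|\xi_1-\xi_2\|_{L^{m_0}(0,T;L^m)}^{m_0}$ and $\EE\|\eta_1-\eta_2\|_{L^{\gamma+1}(0,T;L^{\gamma+1})}^{\gamma+1}$ with positive exponents $\delta_1,\delta_2$ depending on $\kappa$ and $R_0$.

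For the stochastic convolution, I would apply \cite[Corollary 3.5(ii)]{brzezniak} just as in the proof of Proposition \ref{semigroup}(i), obtaining
\[
\EE\int_0^T\Bigl|\int_0^t e^{r_v\Delta(t-s)}\sigma_2 w(s)\,dW_2(s)\Bigr|_{L^m}^{m_0}dt \le C\, T^\beta\, \EE\int_0^T |w(s)|_{H^{-\tilde\sigma}_m}^{m_0}\,ds,
\]
for some $\beta,\tilde\sigma>0$ with $\tilde\sigma$ small. Using the continuous embedding $L^m(\Ocal)\hookrightarrow H^{-\tilde\sigma}_m(\Ocal)$, the right-hand side is controlled by $C\,T^\beta\,\EE\|w\|_{L^{m_0}(0,T;L^m)}^{m_0}$.

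The main obstacle is the absorption step for the stochastic term, since its bound involves the left-hand side $\EE\|w\|_{L^{m_0}(0,T;L^m)}^{m_0}$. I would resolve this either by (a) first proving the estimate on a sufficiently small time interval $[0,T_0]$ so that $C\,T_0^\beta < 1/2$, allowing direct absorption, and then iterating finitely many times to cover $[0,T]$, or equivalently (b) by inserting an exponential weight $e^{-\lambda t}$ with $\lambda$ large and applying a version of Gronwall's inequality adapted to the Volterra-type structure. Collecting the deterministic and (absorbed) stochastic contributions, and noting that both fractional exponents in the deterministic bound are strictly positive, yields the stated inequality with constant $C(R_0,\kappa)$ and exponents $\delta_1,\delta_2 \in (0,1]$.
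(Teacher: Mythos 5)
Your proposal is correct and follows essentially the same path as the paper: split the mild formulation into a deterministic and a stochastic convolution, handle the deterministic term via the Sobolev embedding $L^1\hookrightarrow H^{-\delta}_m$, the heat-semigroup smoothing, Young's inequality for convolutions, and Technical Proposition \ref{techpropo}, and handle the stochastic term via the Burkholder--Davis--Gundy type estimate of \cite[Corollary 3.5(ii)]{brzezniak}. Where the paper dismisses term $II$ with ``standard arguments using the Burkholder--Davis--Gundy inequality,'' you make the hidden content explicit: the BDG bound on the stochastic convolution reproduces the unknown $\EE\|w\|_{L^{m_0}(0,T;L^m)}^{m_0}$ on the right-hand side, so one must gain a small factor (via the positive power $T^\beta$ on a short subinterval and iteration, or an exponential weight with a Volterra--Gronwall argument) in order to absorb it. That detail is correct and is a worthwhile amplification of the paper's terse treatment; it does not change the route, only fills in the absorption step.
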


\begin{proof}
First, we get by the analyticity of the semigroup,

\begin{align} \label{continue}
&\EE \left(\int_0^T \left\vert \wk ^{(1)}(t)-\wk ^{(2)}(t)\right\vert^{m_0}_{L^{m }}\, dt\right)
\\
\le& \EE\left(\int_0^T \left\vert\int_0^ t e^{r_v \Delta(t-s)}(\phi_\kk(h(\eta_1,\xi_1,s)) \eta^1(s) \xi_1^ {2}(s)-\phi_\kk(h(\eta_2,\xi_2,s)) \eta _2(s)\xi_2^ {2}(s))\, ds\right\vert_{L^{m}}^{m_0}\, dt\right)\nonumber
\\
\nonumber
&{}+\EE\left( \int_0^T \left\vert\int_0^ t (\sigma_2 \wk ^{(1)}(s)-\sigma_2 \wk ^{(2)}(s))\,dW_2(s)\right\vert_{L^m}^{m_0}\, dt\right)
\\
\nonumber =:& I+II.
\end{align} 
{Let us consider $I$.
Here, we use the  Sobolev embedding $L^ {1}(\CO)\hookrightarrow  H^{-\delta}_{m}(\CO)$, where $\delta\ge d(1-\frac 1m)$.
Next,  the Young inequality for convolution gives for $l,\mu\ge 1$, with $\frac 1{m_0}+1=\frac 1l+\frac 1\textkappa $ and  $\frac \delta 2<\frac 1l$
\begin{align*} 
 I\le T^{\beta_0}\EE \left(\int_0^T \left\vert\stose\eta ^1(s)  \xi_1^ {2}(s)-\stosz\eta_2(s)\xi_2^ {2}(s)\right\vert^\textkappa_{L^1}\, ds\right).
\end{align*} 
Note, the assumptions on $m_0$ and $m$ in the Hypothesis \ref{init} gives the existence of $l$ and $\mu$.
Now, by the technical Proposition \ref{techpropo}
we can infer that there exists a constant $C=C(\Reins,\kappa)\,>0$ and $\delta_1,\delta_2>0$ such that
\begin{align*} 
I \le C(\Reins,\kappa)\, \left(\left(\EE \Vert\xi_1-\xi_2\Vert_{L^{m_0}(0,T;L^m)}^{m_0}\right)^ {\delta_1}+ \left(\EE\Vert\eta_1-\eta_2\Vert_{L^{\gamma+1}(0,T;L^{\gamma+1})}^{\gamma+1}\right)^{\delta_2}\right).
\end{align*} 
It remains to tackle the second term $II$. This can be done by standard arguments using the Burkholder-Davis-Gundy inequality, see Section \ref{sec:BDG}.}
\end{proof}
\renewcommand{\LLm}{{2}}

Next, we shall tackle the continuity of $u_\kappa$ with respect to $\eta$ and $\xi$.

\begin{proposition}\label{continuity}
Let the couples $(\eta_1,\xi_1)\in\subX\subset\CM_\MA^{2,m_0}(\BY,\mathbb{Z})$ and $(\eta_2,\xi_2)\in\subX\subset\CM_\MA^{2,m_0}(\BY,\mathbb{Z})$ be given.
Let $\uk^{(1)}$ and $\uk^{(2)}$ be the solutions to
\begin{align*}
d \uk ^{(j)}(t)&=[r_u\Delta (\uk^{(j)})^{[\gamma]}(t)-\chi\phi_\kappa(h(\eta^{(j)},\xi^{(j)},t))\uk^{(j)}(t)(\xi^{(j)})^2(t)]\,dt+\sigma_1 {u_\kappa}^{(j)}(t)\,dW_1(t),\\
t&\in(0,T],
\end{align*}
with  $\uk ^{(j)}(0)=u_0, j=1,2$. Then, under the assumptions of Theorem \ref{mainresult} there exist  constants $C=C(\kappa,R_1,R_2)>0$ and   $c,\delta_1>0 $ such that
\begin{align*} 
\lqq{ \EE\left[\sup_{0\le s\le T}\left\vert \uk^{(1)} (s)-\uk^{(2)}(s)\right\vert^\LLm_{H^{-1}_2}\right]+ c \EE\left\Vert\uk ^{(1)}-\uk^{(2)}\right\Vert^{\gamma+1}_{L^{\gamma+1}(0,T ;L^{\gamma+1})}}
\\
\le&C(\Reins,\kappa)\left\{ \left(\EE \left\Vert \xi_1-\xi _2\right\Vert_{L^{m_0}(0,T;L^m)}^{\LLm}\right)^{\delta_1}+\EE \left\Vert \xi_1-\xi _2\right\Vert_{L^{m_0}(0,T;L^m)}^{\LLm}\right\}
.
\end{align*}

\end{proposition}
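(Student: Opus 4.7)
The plan is to proceed along the lines of a standard variational monotonicity estimate adapted to the porous media setting, exploiting the cancellation coming from the monotonicity of $u\mapsto -\Delta u^{[\gamma]}$ in the Gelfand triple $L^{\gamma+1}(\Ocal)\subset H^{-1}_2(\Ocal)\subset (L^{\gamma+1}(\Ocal))^\ast$. Set $z:=\uk^{(1)}-\uk^{(2)}$ and $F_j:=\phi_\kappa(h(\eta_j,\xi_j,\cdot))\eta_j\xi_j^2$ for $j=1,2$. Then $z$ satisfies, with $z(0)=0$,
\[
dz(t)=\bigl[r_u\Delta\bigl((\uk^{(1)})^{[\gamma]}-(\uk^{(2)})^{[\gamma]}\bigr)-\chi(F_1-F_2)\bigr]\,dt+\sigma_1 z(t)\,dW_1(t).
\]
I would apply the It\^o formula for $|\cdot|_{H^{-1}_2}^2$ from the variational setup of \cite{weiroeckner} (as in Theorem \ref{theou1}). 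The monotonicity bound $(a^{[\gamma]}-b^{[\gamma]})(a-b)\ge 2^{1-\gamma}|a-b|^{\gamma+1}$ gives
\[
2r_u\,\mathbin{_{V^\ast}\langle\Delta((\uk^{(1)})^{[\gamma]}-(\uk^{(2)})^{[\gamma]}),z\rangle_V}\le -2^{2-\gamma}r_u\,|z|_{L^{\gamma+1}}^{\gamma+1},
\]
which produces the dissipative term on the left-hand side of the claim.

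Next, I would estimate the forcing difference in duality. Using the Sobolev embedding $L^1(\Ocal)\hookrightarrow H^{-2}_{(\gamma+1)/\gamma}(\Ocal)$ (valid since $\gamma>2$ and $d\le 3$), one has, for any $\varepsilon>0$,
\[
2\chi\bigl|\mathbin{_{V^\ast}\langle(-\Delta)^{-1}(F_1-F_2),z\rangle_V}\bigr|\le \varepsilon|z|_{L^{\gamma+1}}^{\gamma+1}+C(\varepsilon,\chi)|F_1-F_2|_{L^1}^{(\gamma+1)/\gamma}.
\]
Choosing $\varepsilon<2^{2-\gamma}r_u$ and absorbing, the $L^{\gamma+1}$-term on the right is swallowed. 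The multiplicative It\^o correction is controlled by Remark \ref{LSSigma}:
\[
|\sigma_1 z|_{L_{\textup{HS}}(H_1,H^{-1}_2)}^2\le L_1^2|z|_{H^{-1}_2}^2.
\]
After taking the supremum over $t\in[0,T]$ and applying the Burkholder--Davis--Gundy inequality to the martingale term (again using Remark \ref{LSSigma}), I obtain
\[
\Eb\Bigl[\sup_{0\le s\le T}|z(s)|_{H^{-1}_2}^2\Bigr]+c\,\Eb\|z\|_{L^{\gamma+1}(0,T;L^{\gamma+1})}^{\gamma+1}
\le C\,\Eb\int_0^T|z|_{H^{-1}_2}^2\,ds+C\,\Eb\int_0^T|F_1-F_2|_{L^1}^{(\gamma+1)/\gamma}\,ds.
\]
A Gronwall step on the first right-hand term eliminates the $z$ dependence.

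It remains to invoke the Technical Proposition \ref{techpropo} with the exponent $\alpha=(\gamma+1)/\gamma\in[1,m_0/2)$ and a suitable $q^\ast\in(1,m_0]$ (the admissibility of this pair follows from Hypothesis \ref{init}, possibly combined with one application of H\"older in $(t,\omega)$ to match the moments exactly), which bounds
\[
\Eb\int_0^T|F_1-F_2|_{L^1}^{(\gamma+1)/\gamma}\,ds
\]
by a sum of terms of precisely the form appearing on the right-hand side of the claim: a piece with small powers $\delta_1,\delta_2\in(0,1)$ and constant $C_1(\kappa,R_0)$, together with the linear-in-the-difference piece with constant $C_2(\kappa,R_0)$ coming from the Lipschitz contribution of the cut-off $\phi_\kappa$ in the intermediate regime $\tau^u\le s\le\tau^o$. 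The main technical difficulty is precisely this last point: the cut-off $\phi_\kappa(h(\eta_j,\xi_j,\cdot))$ depends on $(\eta_j,\xi_j)$, so on the set where one $h$ has already crossed $\kappa$ but the other has not, the difference $F_1-F_2$ inherits a term proportional to $|h_1-h_2|$ through the Lipschitz constant $\operatorname{Lip}(\phi_\kappa)\le 2/\kappa$, which explains the appearance of the second line in Proposition \ref{techpropo} and hence the $2\chi$-term (with full powers and no contraction factor) in the claim. Once Proposition \ref{techpropo} is in hand, however, this is handled routinely.
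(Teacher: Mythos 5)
Your proposal is correct and follows essentially the same route as the paper: It\^o formula in the $H^{-1}_2$ Gelfand triple, the monotonicity inequality of Lemma~\ref{lem:porous-medium-inequality} to produce the dissipative $L^{\gamma+1}$-term on the left, Young's inequality with $\varepsilon$-absorption together with the embedding $L^1(\CO)\hookrightarrow H^{-2}_{(\gamma+1)/\gamma}(\CO)$ to reduce the forcing difference to $\|F_1-F_2\|_{L^{(\gamma+1)/\gamma}(0,T;L^1)}$, Technical Proposition~\ref{techpropo} with $\alpha=(\gamma+1)/\gamma$, and a Gronwall step combined with Burkholder--Davis--Gundy for the remaining terms. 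Your remark about the Lipschitz contribution of $\phi_\kappa$ on the intermediate set where only one of the two cut-offs has engaged is exactly the source of the second, linear-in-the-difference term in the claimed estimate.
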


\begin{proof}
Applying the It\^o formula and integration by parts gives
	\begin{align*} 
	\lqq{\vert u^{(1)}(t)-u^{(2)}(t)\vert_{H^{-1}_2}^2}\\
	&+2r_u\int_0^{t}\int_\Ocal ((u^{(1)})^{[\gamma]}(s,x)-(u^{(2)})^{[\gamma]}(s,x))( u^{(1)}(s,x)-u^{(2)}(s,x))\,dx\, ds
	\\
	=&-
2\chi\int_0^{t}\int_\Ocal(\nabla^{-1}( u^{(1)}(s,x)-u^{(2)}(s,x)))\\
	&\qquad\times(\nabla^{-1} (\phi_\kk(h(\eta_1,\xi_1,s))\uk^{(1)}(s,x)\xi_1^2(s,x)-\phi_\kk(h(\eta_2,\xi_2,s)) \uk^{(2)}(s,x)\xi_2^2(s,x))\,dx \,ds
	\\
	&+2\int_0^{t}\left\langle u^{(1)}(s)-u^{(2)}(s),\sigma_1(u^{(1)}(s)-u^{(2)}(s))\,dW_1(s)\right\rangle\\ &\qquad+ \int_0^t\vert\sigma_1(u^{(1)}(s)-u^{(2)}(s))\vert_{L_{\text{HS}}(H_1,H_2^{-1})}^2\,ds
\\
=:& I+ II + III,
\end{align*} 
where $\nabla^{-1}:=-(-\Delta)^{1/2}$.
To deal with the second term and third term, i.e.\ II and III, we apply first the Burkholder Gundy inequality, resp. calculate the trace. Here, we apply Remark \ref{LSSigma}.
Next, we obtain
by the Young inequality and Lemma \ref{lem:porous-medium-inequality}  for $\varepsilon\in \left(0,2^{\frac{2-\gamma}{\gamma+1}} r_u^{\frac{1}{\gamma+1}}\right]$,
	\begin{align*} 
II+III	\le& \varepsilon^{\gamma+1} \EE\int_0^{t} \vert u^{(1)}(s)-u^{(2)}(s)\vert_{L^{\gamma+1}}^{\gamma+1}\, ds
\\
	& +C\EE \int_0^{t} \vert u^{(1)}(s)-u^{(2)}(s)\vert^2_{H_2^{-1}}\,ds
.%
		\end{align*}

Before tackling the first term $I$, let us introduce the following definition.
Let us set
for $k\in\NN$
$$
\tau_{k}(\eta,\xi):=\inf\{ t\ge 0: h(\eta,\xi,t)\ge k\},
\quad \Omega_{k}:=\left\{\omega\in\Omega: \tau_{k}(\eta_1,\xi_1)\le \tau_{k}(\eta_2,\xi_2)\right\},
$$
 and $\Omega_k^c:=\Omega\setminus {\Omega}_k$.

Now, we can decompose the third term into the following summands
\begin{align*}
&\EE \int_0^{t}\int_\Ocal(\nabla^{-1}( u^{(1)}(s,x)-u^{(2)}(s,x)))\\
	&\qquad\times(\nabla^{-1} (\phi_\kk(h(\eta_1,\xi_1,s))\uk^{(1)}(s,x)\xi_1^2(s,x)-\phi_\kk(h(\eta_2,\xi_2,s)) \uk^{(2)}(s,x)\xi_2^2(s,x))\,dx \,ds
\\
&\le \EE \mathbbm{1}_{\Omega_{2\kappa} } \int_0^{t\wedge \tau_{2\kappa}(\eta_2,\xi_2)}
\int_\Ocal(\nabla^{-1}( u^{(1)}(s,x)-u^{(2)}(s,x)))\\
	&\qquad\times(\nabla^{-1} (\phi_\kk(h(\eta_1,\xi_1,s))(\uk^{(1)}(s,x)-\uk^{(2)}(s,x))\xi_1^2(s,x))\,dx \,ds
\\
&{}+\EE \mathbbm{1}_{\Omega_{2\kappa} } \int_0^{t\wedge \tau_{2\kappa}(\eta_2,\xi_2)}
\int_\Ocal(\nabla^{-1}( u^{(1)}(s,x)-u^{(2)}(s,x)))\\
	&\qquad\times(\nabla^{-1} \left(  \phi_\kk(h(\eta_1,\xi_1,s)) -\phi_\kk(h(\eta_2,\xi_2,s))\right)    \uk^{(2)}(s,x)\xi_1^2(s,x))\,dx \,ds
\\
&{}+\EE \mathbbm{1}_{\Omega_{2\kappa} } \int_0^{t\wedge \tau_{2\kappa}(\eta_2,\xi_2)}
\int_\Ocal(\nabla^{-1}( u^{(1)}(s,x)-u^{(2)}(s,x)))\\
	&\qquad\times(\nabla^{-1} (\left(\xi_1^2(s,x)-\xi_2^2(s,x)\right)\phi_\kk(h(\eta_2,\xi_2,s)) \uk^{(2)}(s,x)\,dx \,ds
\\
&{}+\EE \mathbbm{1}_{\Omega_{2\kappa}^c } \int_0^{t\wedge \tau_{2\kappa}(\eta_1,\xi_1)}
\int_\Ocal(\nabla^{-1}( u^{(1)}(s,x)-u^{(2)}(s,x)))\\
	&\qquad\times(\nabla^{-1} (\phi_\kk(h(\eta_1,\xi_1,s))(\uk^{(1)}(s,x)-\uk^{(2)}(s,x))\xi_1^2(s,x))\,dx \,ds
\\
&{}+\EE \mathbbm{1}_{\Omega_{2\kappa}^c } \int_0^{t\wedge \tau_{2\kappa}(\eta_1,\xi_1)}
\int_\Ocal(\nabla^{-1}( u^{(1)}(s,x)-u^{(2)}(s,x)))\\
	&\qquad\times(\nabla^{-1} \left(  \phi_\kk(h(\eta_1,\xi_1,s)) -\phi_\kk(h(\eta_2,\xi_2,s))\right)    \uk^{(2)}(s,x)\xi_1^2(s,x))\,dx \,ds
\\
&{}+\EE \mathbbm{1}_{\Omega_{2\kappa}^c } \int_0^{t\wedge \tau_{2\kappa}(\eta_1,\xi_1)}
\int_\Ocal(\nabla^{-1}( u^{(1)}(s,x)-u^{(2)}(s,x)))\\
	&\qquad\times(\nabla^{-1} (\left(\xi_1^2(s,x)-\xi_2^2(s,x)\right)\phi_\kk(h(\eta_2,\xi_2,s)) \uk^{(2)}(s,x)\,dx \,ds
\\ & =: I_1(t)+I_2(t)+ I_3(t)+I_4(t)+I_5(t)+I_6(t).
\end{align*}
Let us start with $I_1$.
Let us put  for $r,r',\tilde{m},\tilde{m}',p,p'$, and
$s^\ast=\frac{2}{\gamma +1}$,
\begin{align}\label{herewieder}
&\quad r=2\left(1+\frac 1\gamma \right),\quad r'=\frac {2(1+\gamma )}{2+\gamma },
\nonumber 
\\
&\quad  {\tilde m}= {2(\gamma +1)}, \quad{\tilde m}'=\frac{2(\gamma +1)}{2\gamma +1},\quad p=\gamma ,\quad p'=\frac \gamma {\gamma -1}.
\end{align}
 Using duality  with $\tilde{m}\ge \gamma +1$, $2\le r<\gamma +1$, $\frac {1}{\tilde{m}}+\frac 1{\tilde{m}'}\le 1$ and $\frac 1r+\frac 1{r'}\le 1$,
we know for the integrand of $I_{1}$
\begin{align*} 
\lqq{\left\vert\left\langle \Delta^{-\frac 12+\frac {s^\ast}2} (\uk^{(1)}(s)-\uk^{(2)}(s)),\Delta ^{-\frac 12-\frac {s^\ast}2}\left(\phi_\kk(h(\eta_1,\xi_1,\cdot ))\uk^{(1)}\xi_1^2
-\phi_\kk(h(\eta_1,\xi_1,\cdot ))\uk^{(1)}\xi_1^2 \right)\right\rangle\right\vert}
&
\\
\le&
\left\vert \Delta^{-\frac 12+\frac {s^\ast}2} (\uk^{(1)}(s)-\uk^{(2)}(s))\right\vert_{L^r}
\\
&{}\times \left\vert\Delta ^{-\frac 12-\frac {s^\ast}2}(\phi_\kk(h(\eta_1,\xi_1,s ))\uk ^{(1)}(s)\xi_1^2(s))-\phi_\kk(h(\eta_1,\xi_1,\cdot ))\uk ^{(2)}(s)\xi_1^2(s)))\right\vert_{L^{r'}}.
\end{align*} 
By integration in time, and applying, firstly, the H\"older and, secondly,  Young inequality we know for any $\ep_1>0$ there exists a $C(\ep_1)>0$ such that
\begin{equation} \label{hereweitermachen}
\begin{split}
&\int_0^ {\tau_{2\kappa}^1(\eta_1,\xi_1 )\wedge t}\left\vert\left\langle \Delta^{-\frac 12+\frac {s^\ast}2} (\uk^{(1)}(s)-\uk^{(2)}(s)),\Delta ^{-\frac 12-\frac {s^\ast}2}(\phi_\kk(h(\eta_1,\xi_1,\cdot ))\left( \uk ^{(1)}(s)-\uk ^{(2)}(s)\right)\xi_1^2(s))\right\rangle\right\vert\, ds\\
\le&
C(\ep_1)\left(\int_0^ {\tau_{2\kappa}^1(\eta_1,\xi_1 )\wedge t}
 \left\vert \Delta^{-\frac 12+\frac {s^\ast}2} (\uk^{(1)}(s)-\uk^{(2)}(s))\right\vert_{L^r}^{\tilde m}\, ds\right)^{r}
\\ &\qquad {}+ 
\ep_1
\left(\int_0^ {\tau_{2\kappa}^1(\eta_1,\xi_1 )\wedge t}\phi^{{\tilde m}'}_\kk(h(\eta_1,\xi_1,s ))\left\vert(\uk ^{(1)}(s)-\uk ^{(2)}(s))\xi^2_1(s)\right\vert^{{\tilde m}'}_{H^{-1-{s^\ast}}_{r'}}\, ds\right)^\frac {r'}{{\tilde m}'}.
\end{split}
\end{equation} 
Let us analyze the first term on the right hand side.
Take some
 $\theta\in (0,2/\tilde m)$ and let 
$$
\frac 1{r''}=\frac \theta2+\frac {1-\theta}{\gamma+1},\quad \tilde s=-\theta+(1-\theta)(s^\ast-1).
$$ 
Then, we know by interpolation that 
$$
\vert u\vert_{H^{s^\ast-1}_r}\le \vert u \vert^\theta _{H^{-1}_2}\vert u\vert^{1-\theta}_{H^{\tilde s}_{r''}}
$$
This gives
\begin{align*}
&\EE\left\Vert \Delta^{-\frac 12+\frac {s^\ast}2} (\uk^{(1)}-\uk^{(2)})\right\Vert^r_{L^{\tilde m}(0,T;{L^r})}
\le C\EE \left\Vert \uk^{(1)}-\uk^{(2)}\right\Vert^r_{L^{\tilde m}(0,T;H^{s^\ast-1}_r)}
\nonumber
\\
&
\le C\EE\left( \int_0^ t\vert \uk^{(1)}(s)-\uk^{(2)}(s)\vert^{\theta \tilde m}_{H^{-1}_2} \vert \uk^{(1)}(s)-\uk^{(2)}(s)\vert^{(1-\theta) \tilde m}_{H^{\tilde s}_{r''})}\, ds\right)^\frac r{\tilde m}
\nonumber.
\end{align*}
The H\"older inequality gives
\begin{align*}
&\EE\left\Vert \Delta^{-\frac 12+\frac {s^\ast}2} (\uk^{(1)}-\uk^{(2)})\right\Vert^r_{L^{\tilde m}(0,T;{L^r})}
\nonumber
\\
&
\le C\EE \left(\int_0^ t \vert \uk^{(1)}(s)-\uk^{(2)}(s)\vert^{2}_{H^{-1}_2}\, ds\right)^{\frac {\theta{\tilde m}}2\frac r{\tilde m}} \,\left(\int_0^ t \vert \uk^{(1)}(s)-\uk^{(2)}(s)\vert^{(1-\theta) \tilde m\frac 2{2-\theta\tilde m}}_{H^{\tilde s}_{r''}}\, ds\right)^ {\frac r{\tilde m}\, \frac {2-\theta\tilde m}2}
\\
&
\le C\EE \left(\int_0^ t \vert \uk^{(1)}(s)-\uk^{(2)}(s)\vert^{2}_{H^{-1}_2}\, ds\right)^{\frac {r\theta}2} \,
 \Vert \uk^{(1)}-\uk^{(2)}(s)\Vert_{L^{(1-\theta) \tilde m\frac 2{2-\theta\tilde m}}(0,T;H^{\tilde s}_{r''} )}^{r(1-\theta)}
.
\end{align*}
By the Young inequality, we know for any $\ep_2>0$ there exists some $C(\ep_2)>0$ such that
\begin{align*}
&\EE\left\Vert \Delta^{-\frac 12+\frac {s^\ast}2} (\uk^{(1)}-\uk^{(2)})\right\Vert^r_{L^{\tilde m}(0,T;{L^r})}
\nonumber
\\
&
\le C(\ep_2)\EE \int_0^ t\vert \uk^{(1)}(s)-\uk^{(2)}(s)\vert^{2}_{H^{-1}_2}\, ds +
\ep _2 \EE \Vert \uk^{(1)}-\uk^{(2)}(s)\Vert_{L^{(1-\theta) \tilde m\frac 2{2-\theta\tilde m}}(0,T;H^{\tilde s}_{r''} )}^{r(1-\theta)\frac {2-r\theta}2}.
\end{align*}
Observe, for small $\theta$, we know $r''\ge r$, $\tilde s\le  (s^\ast-1)$, and $\tilde m'':=(1-\theta) \tilde m\frac 2{2-\theta\tilde m}\ge \tilde m$. 
Thus we know, since
$$
\frac 1{r}> \frac{-(s^\ast-1)}2+\frac{1-s^\ast}{\gamma +1},\quad \frac 1{\tilde m''}\ge  \frac{1-s^\ast}{\gamma +1}.
$$
that the same holds for $r'',\tilde m''$, and $\tilde s$.
By Proposition \ref{interpolation_11}, we know, if $\gamma>1$ then there exists some $\theta>0$ such that
\begin{align*} \lqq{
 \Vert \uk^{(1)}-\uk^{(2)}\Vert_{L^{(1-\theta) \tilde m\frac 2{2-\theta\tilde m}}(0,T;H^{\tilde s}_{r''} )}^{r(1-\theta)\frac {2-r\theta}2}
}&
\\
\le&  \sup_{0\le s\le T}\vert\uk^{(1)}(s)-\uk^{(2)}(s)\vert^2_{H^{-1}_2}+\int_0^ T\vert\uk^{(1)}(s)-\uk^{(2)}(s)\vert_{L^{\gamma +1}}^{\gamma +1}\, ds.
\end{align*} 
This means, that later on, we can cancel this term by the LHS.
Let us analyze the second term of the RHS in \eqref{hereweitermachen}, i.e.\ 
$$
\ep_1
\left(\int_0^ {\tau_{2\kappa}^1(\eta_1,\xi_1 )\wedge t}\left(\phi_\kk(h(\eta_1,\xi_1,s ))\right)^{{\tilde m}'}\left\vert(\uk ^{(1)}(s)-\uk ^{(2)}(s))\xi^2_1(s)\right\vert^{{\tilde m}'}_{H^{-1-{s^\ast}}_{r'}}\, ds\right)^\frac {r'}{{\tilde m}'}.
$$
First,  we use \cite{runst} p. 171, Theorem 1, where we put
 $s_1=-s^{\ast\ast}<0$ and $s_2=s^{\ast\ast}+\ep$, $s^{\ast\ast}=s^\ast/2$, $\ep>0$ very small. In this way we get
$$
\left\vert(\uk ^{(1)}(s)-\uk ^{(2)}(s))\xi_1^2(s)\right\vert_{H^{-1-{s^\ast}}_{r'}}\le C\, \left\vert\uk ^{(1)}(s)-\uk ^{(2)}(s)\right\vert_{H^{s_1}_{r'}}
\left\vert\xi_1^2(s)\right\vert_{H^{s_2}_{r'}}.
$$
or, applying the H\"older inequality in time,
\begin{align*} 
\lqq{ \left( \int_0^ {\tau_{2\kappa}^1(\eta_1,\xi_1 )\wedge t} \left\vert(\uk ^{(1)}(s)-\uk ^{(2)}(s))\xi_1^2(s)\right\vert^{{\tilde m}'}_{H^{-1-{s^\ast}}_{r'}}\, ds\right)^\frac{r'}{{\tilde m}'}}
&
\\
\le&  C\,\left( \int_0^ {\tau_{2\kappa}^1(\eta_1,\xi_1 )\wedge t}  \left\vert\uk ^{(1)}(s)-\uk ^{(2)}(s)\right\vert_{H^{s_1}_{r'}}^{p{\tilde m}'}\,ds \right)^\frac {r'}{{\tilde m}'p}
\left( \int_0^{\tau_{2\kappa}^1(\eta_1,\xi_1 )\wedge t}\left\vert\xi_1^2(s)\right\vert^{{\tilde m}'p'}_{H^{s_2}_{r'}}\,ds \right)^\frac {r'}{{\tilde m}'p'}.
\end{align*} 
Now, since for our choice of $r'>2$, $s_1$, and $m'p$, 
we know that 
$$
(i)\quad \frac 1{r'}\ge \frac{-s_1}
2+\frac{1+s_1}{\gamma +1}=\frac {s^\ast}{4}+\frac {1-s^\ast/2}{\gamma+1},\quad \frac 1{m'p}\ge  \frac{1+s_1}{\gamma +1}=\frac{1-s^\ast/2}{\gamma +1},
$$
we have 
\begin{align*} \lqq{
\EE \big\Vert\uk ^{(1)}-\uk ^{(2)}\big\Vert_{L^{p\tilde m'}(0,T;H^{s_1}_{r''})}^{r''}}
\\
\le& C
\EE \left( \sup_{0\le s\le t}\vert\uk^{(1)}(s)-\uk^{(2)}(s)\vert^2_{H^{-1}_2}+\int_0^ t\vert\uk^{(1)}(s)-\uk^{(2)}(s)\vert_{L^{\gamma +1}}^{\gamma +1}\, ds\right).
\end{align*} 
Next, we tackle $\left\vert\xi_1^2(s)\right\vert^{\tilde m'p'}_{H^{s_2}_{r'}}$. Here, we have by \cite[p. 364]{runst}
for
$$ s_2+2\left(\frac d{\tilde r}-s_2\right)=2 \frac  d{\tilde r}-s_2=\frac d{r'}
\quad \Rightarrow
 \quad
 \frac d{\tilde r}=\frac 12\left(\frac d{r'}+s_2\right)=
 \frac 12\left(\frac{c(2+\gamma) }{2+2\gamma }+s_2\right),
$$
the estimate
$$
\left\vert\xi_1^2(s)\right\vert^{\tilde m'p'}_{H^{s_2}_{r'}}\le C\left\vert\xi_1(s)\right\vert^{2\tilde m'p'}_{H^{s_2}_{\tilde r}}
.
$$
Let us put $s_2=\frac {1}{(1/2+\gamma )}$. Then, $\tilde r=\frac {2(1+\gamma)}{2+\gamma}$.
Furthermore, note that $r\ge 2$, $r'\le 2$ and $\tilde r\le 2$ for $s_2=\frac {1}{(1/2+\gamma )}$.
Next,
 the Sobolev embedding $H^{\delta}_2(\CO)\hookrightarrow  H^{s_2}_{\tilde r}(\CO)$ with $\delta\ge s_2+(\frac d2-\frac d{\tilde r})=\frac {1+d\gamma }{4(1+\gamma )}\le\frac d4$. In particular, $\delta=\frac {3+2\gamma}{2+6\gamma+4\gamma^2}$.
 Now, to estimate the term by $\BH$ we need that
$$
(ii)\quad \delta=(1-\theta)(\rho+1)=\rho+(1-\theta),\qquad \frac 1{l}=\frac {1-\theta}2,\qquad l\ge \min(2,\tilde m'p')
.$$
Hence, $\rho\ge\delta-\frac 2{2\tilde m'p'}=\frac {1+d\gamma }{4(1+\gamma )}-\frac {(1+\gamma )(1+2\gamma )}{2\gamma (2+\gamma )}<0$.
In this way we get
$$ \left\Vert\xi_1^2\right\Vert_{L^{p'\tilde m'}(0,T;H^{s_1}_{r'})}\le c \Vert \xi_1\Vert^2_{L^{2\tilde m'p'}(0,T;H^{\delta}_2)}\le c \Vert \xi_1\Vert^2_{\BH}.
$$
It follows by taking into account the cut-off function
\begin{align*} 
\lqq{ \left( \int_0^  {\tau_{2\kappa}^1(\eta_1,\xi_1 )\wedge t}\phi_\kk(h(\eta_1,\xi_1,\cdot ))\left\vert(\uk ^{(1)}(s)-\uk ^{(2)}(s))\xi_1^2(s)\right\vert^{\tilde m'}_{H^{-1-{s^\ast}}_{r'}}\, ds\right)^{r'}}
&
\\
\le&  C\, \Vert \phi_\kk(h(\eta_1,\xi_1,\cdot ))\mathbbm{1}_{[0, {\tau_{2\kappa}^1(\eta_1,\xi_1 )\wedge t})}\xi_1\Vert^{2r'}_{\BH}
\\
&{}
\times \EE \left( \sup_{0\le s\le t}\vert\uk^{(1)}(s)-\uk^{(2)}(s)\vert^2_{H^{-1}_2}+\int_0^ t\vert\uk^{(1)}(s)-\uk^{(2)}(s)\vert_{L^{\gamma +1}}^{\gamma +1}\, ds\right)^{ {r'}}
\\
\le& C(\kappa)\,
\left\{ \EE \left( \sup_{0\le s\le t}\vert\uk^{(1)}(s)-\uk^{(2)}(s)\vert^2_{H^{-1}_2}+\int_0^ t\vert\uk^{(1)}(s)-\uk^{(2)}(s)\vert_{L^{\gamma +1}}^{\gamma +1}\, ds\right)^{r'}\right\}.
\end{align*} 
In this way, we get for $I_1$
\begin{align*} 
\lqq{\EE\vert I_1(t)\vert\le C(\ep_1,\ep_2)
\EE \int_0^ t \vert\uk^{(1)}(s)-\uk^{(2)}(s)\vert^2_{H^{-1}_2}\, ds
}
&
\\
&{} +
\ep_1 \EE \left( \sup_{0\le s\le t}\vert\uk^{(1)}(s)-\uk^{(2)}(s)\vert^2_{H^{-1}_2}+\int_0^ t\vert\uk^{(1)}(s)-\uk^{(2)}(s)\vert_{L^{\gamma +1}}^{\gamma +1}\, ds\right)
\\&{}+
\ep_2 C(\kappa)\,
\left\{ \EE \left( \sup_{0\le s\le t}\vert\uk^{(1)}(s)-\uk^{(2)}(s)\vert^2_{H^{-1}_2}+\int_0^ t\vert\uk^{(1)}(s)-\uk^{(2)}(s)\vert_{L^{\gamma +1}}^{\gamma +1}\, ds\right)
^{{r'}} \right\}
.
\end{align*} 
Next, we have to tackle $I_2(t)$. Again
take $r,r',\tilde m,\tilde m',p$, and $p'$ defined as in \eqref{herewieder}.
Again, using duality  
we know for the integrand of $I_{2}$
\begin{align*} 
&\left\vert\left\la \Delta^{-\frac 12+\frac {s^\ast}2} (\uk^{(1)}(s)-\uk^{(2)}(s)),
\Delta ^{-\frac 12-\frac {s^\ast}2}\left(\phi_\kk(h(\eta_1,\xi_1,\cdot ))\uk^{(1)}\xi_1^2
-\phi_\kk(h(\eta_1,\xi_1,\cdot ))\uk^{(1)}\xi_1^2 \right)\right\ra\right\vert
\\
\le&
\left\vert \Delta^{-\frac 12+\frac {s^\ast}2} (\uk^{(1)}(s)-\uk^{(2)}(s))\right\vert_{L^r}\\
&\qquad
\times\left\vert\Delta^{-\frac 12-\frac {s^\ast}2}\left(  \phi_\kk(h(\eta_1,\xi_1,s)) -\phi_\kk(h(\eta_2,\xi_2,s))\right)    \uk^{(2)}(s)\xi_1^2(s))\right\vert_{L^{r'}}
\end{align*} 
By the Young inequality we know for any $\ep_1>0$ there exists a constant $C(\ep_1)>0$ such that 
\begin{align*}\label{mmm01}
&\EE \mathbbm{1}_{\Omega_{2\kappa} } \int_0^{t\wedge \tau_{2\kappa}(\eta_2,\xi_2)}
\int_\Ocal(\nabla^{-1}( u^{(1)}(s,x)-u^{(2)}(s,x)))
\\
	&\qquad\times(\nabla^{-1} \left(  \phi_\kk(h(\eta_1\xi_1,s)) -\phi_\kk(h(\eta_2,\xi_2,s))\right)    \uk^{(2)}(s,x)\xi_1^2(s,x))\,dx \,ds
\\
&\le \ep \EE\Vert u^{(1)}-u^{(2)}\Vert^{r}_{L^{m}(0,T;H^{s^\ast-1}_r)}+C(\ep)\EE \Vert\xi_1-\xi_2\Vert^{r'}_{\BH}  \Vert  \mathbbm{1}_{\Omega_{2\kappa} }  \uk^{(2)}\xi_1^2\Vert^{r'}
_{L^{m'}(0,{t\wedge \tau_{2\kappa}(\eta_2,\xi_2)};H^{-s^\ast-1}_{r'})},
\end{align*}
Next, we follow  the calculation as done before.
First,  we use again \cite[p. 171, Theorem 1]{runst}, where we put
 $s_1=-s^{\ast\ast}<0$ and $s_2=s^{\ast\ast}+\ep$, $s^{\ast\ast}=s^\ast/2$, $\ep>0$ very small. In this way we get 
$$
\left\vert\uk ^{(2)}(s)\xi_1^2(s)\right\vert_{H^{-1-{s^\ast}}_{r'}}\le C\, \left\vert\uk ^{(2)}(s)\right\vert_{H^{s_1}_{r'}}
\left\vert\xi_1^2(s)\right\vert_{H^{s_2}_{r'}}.
$$
Next,  applying the H\"older inequality in time,
\begin{align*} 
\lqq{ \left( \int_0^ {\tau_{2\kappa}^1(\eta_2,\xi_1 )\wedge t} \left\vert\uk ^{(2)}(s)\xi_1^2(s)\right\vert^{{\tilde m}'}_{H^{-1-{s^\ast}}_{r'}}\, ds\right)^\frac{r'}{{\tilde m}'}}
&
\\
\le&  C\,\left( \int_0^ {\tau_{2\kappa}^1(\eta_2,\xi_1 )\wedge t}  \left\vert\uk ^{(2)}(s)\right\vert_{H^{s_1}_{r'}}^{p{\tilde m}'}\,ds \right)^\frac {r'}{{\tilde m}'p}
\left( \int_0^{\tau_{2\kappa}^1(\eta_1,\xi_1 )\wedge t}\left\vert\xi_1^2(s)\right\vert^{{\tilde m}'p'}_{H^{s_2}_{r'}}\,ds \right)^\frac {r'}{{\tilde m}'p'}.
\end{align*} 
Again, by the choice of $r,r',\tilde m,\tilde m',p$, and $p'$, we know that even for a $r''>r'$ we have 
\begin{align*}
&\left( \int_0^ {\tau_{2\kappa}^1(\eta_1,\xi_1 )\wedge t}  \left\vert\uk ^{(2)}(s)\right\vert_{H^{s_1}_{r''}}^{p{\tilde m}'}\,ds \right)^\frac {r''}{{\tilde m}'p}\\
\le& 
C\left( \sup_{0\le s\le  {\tau_{2\kappa}^1(\eta_1,\xi_1 )\wedge t}}  \left\vert\uk ^{(2)}(s)\right\vert_{H^{-1}_2}^2
+
 \int_0^ {\tau_{2\kappa}^1(\eta_1,\xi_1 )\wedge t}  \left\vert\uk ^{(2)}(s)\right\vert_{L^{\gamma+1}}^{\gamma_1}\,ds \right).
\end{align*}
Again, similarly we know
$$
\left( \int_0^{\tau_{2\kappa}^1(\eta_1,\xi_1 )\wedge t}\left\vert\xi_1^2(s)\right\vert^{{\tilde m}'p'}_{H^{s_2}_{r'}}\,ds \right)^\frac {r'}{{\tilde m}'p'}
\le \Vert\xi_2\Vert_{\BH}^{r'} .
$$
Now, taking into account the cut off function we know 
\begin{align*} \lqq{
\EE  \mathbbm{1}_{\Omega_{2\kappa} }   \Vert\xi_1-\xi_2\Vert_{\BH^{{t\wedge \tau_{2\kappa}(\eta_2,\xi_2)}}}^{r'}  \Vert  \uk^{(2)}\xi_1^2\Vert^{r'}_{L^{m'}(0,{t\wedge \tau_{2\kappa}(\eta_2,\xi_2)};H^{-s^\ast-1}_{r'})}}
&
\\
\le&
\EE\Bigg\{    \mathbbm{1}_{\Omega_{2\kappa} } \Vert\xi_1-\xi_2\Vert_{\BH^{{t\wedge \tau_{2\kappa}(\eta_2,\xi_2)}}}^{r'}
\Vert h(\xi_2,\cdot)\xi_2\Vert_{\BH}^{r'}
\\ &{}\times  C\left( \sup_{0\le s\le  {\tau_{2\kappa}^1(\eta_1,\xi_1 )\wedge t}}  \left\vert\uk ^{(2)}(s)\right\vert_{H^{-1}_2}^2
+
 \int_0^ {\tau_{2\kappa}^1(\eta_1,\xi_1 )\wedge t}  \left\vert\uk ^{(2)}(s)\right\vert_{L^{\gamma+1}}^{\gamma_1}\,ds \right)^{\frac {r'}{r''}}\Bigg\}
\\
\le& C(\kappa) \left(\EE \Vert\xi_1-\xi_2\Vert_{\BH} ^2\right)^{1-\frac {r'}{r''}} 
\\
&{}\times \left\{ \EE\left( \sup_{0\le s\le  {\tau_{2\kappa}^1(\eta_1,\xi_1 )\wedge t}}  \left\vert\uk ^{(2)}(s)\right\vert_{H^{-1}_2}^2
+
 \int_0^ {\tau_{2\kappa}^1(\eta_1,\xi_1 )\wedge t}  \left\vert\uk ^{(2)}(s)\right\vert_{L^{\gamma+1}}^{\gamma_1}\,ds \right)\right\}^{\frac {r'}{r''}}
.
\end{align*} 
Since the operator is invariant, the second expectation value is smaller than $R_1$.
Next, let us treat $I_3$.
\begin{align*} 
\lqq{ \EE \mathbbm{1}_{\Omega_{2\kappa} } \int_0^{t\wedge \tau_{2\kappa}(\eta_2,\xi_2)}
\int_\Ocal(\nabla^{-1}( u^{(1)}(s,x)-u^{(2)}(s,x)))}
&
\\
	&\qquad\times(\nabla^{-1} (\left(\xi_1^2(s,x)-\xi_2^2(s,x)\right)\phi_\kk(h(\eta_2,\xi_2,s)) \uk^{(2)}(s,x)\,dx \,ds
\\
\le& \ep \EE\Vert u^{(1)}-u^{(2)}\Vert^{\gamma +1}_{L^{\gamma +1}(0,T;L^{\gamma +1})}+
C(\ep)  \Vert  \mathbbm{1}_{\Omega_{2\kappa} }  \uk^{(2)}(\xi_1^2-\xi_2^2)\Vert^{\frac {\gamma +1}\gamma }_{L^{\frac {\gamma +1}\gamma }(0,{t\wedge \tau_{2\kappa}(\eta_2,\xi_2)};H^{-2}_{\frac {\gamma +1}\gamma })}.
\end{align*} 
By similar calculation as before, we get
\begin{align*} 
\lqq{ \EE \mathbbm{1}_{\Omega_{2\kappa} } \int_0^{t\wedge \tau_{2\kappa}(\eta_2,\xi_2)}
\int_\Ocal(\nabla^{-1}( u^{(1)}(s,x)-u^{(2)}(s,x)))}
&
\\
	&\qquad\times(\nabla^{-1} (\left(\xi_1^2(s,x)-\xi_2^2(s,x)\right)\phi_\kk(h(\eta_2,\xi_2,s)) \uk^{(2)}(s,x)\,dx \,ds
\\
\le& \ep \EE\Vert u^{(1)}-u^{(2)}\Vert^{\gamma +1}_{L^{\gamma +1}(0,T;L^{\gamma +1})}\\
&\qquad+
C(\ep) \EE\left\{ \sup_{0\le s\le T}\vert \uk^{(2)}(s)\vert^{\frac {\gamma +1}\gamma }_{L^p}
 \Vert\xi_1^2-\xi_2^2\Vert^{\frac {\gamma +1}\gamma }_{L^{\frac {\gamma +1}\gamma }(0,{t\wedge \tau_{2\kappa}(\eta_2,\xi_2)};L^r)}\right\}
 \\
\le& \ep \EE\Vert u^{(1)}-u^{(2)}\Vert^{\gamma +1}_{L^{\gamma +1}(0,T;L^{\gamma +1})}\\
&\qquad+
C(\ep,\kappa)
 \EE\left\{ \sup_{0\le s\le T}\vert \uk^{(2)}(s)\vert^{\frac {\gamma +1}\gamma }_{L^p}\right\}
\left\{\EE \Vert\xi_1-\xi_2\Vert_{L^m(0,T;L^{2r})}\right\}
 .
\end{align*} 
$I_4$, $I_5$, and $I_6$ can be estimated by similar steps.

Now, we can collect all terms and get
	\begin{align*} 
&	\EE \vert u^{(1)}(t)-u^{(2)}(t)\vert_{H^{-1}_2}^2\\
&\qquad+2r_u\EE\int_0^{t}\int_\Ocal ((u^{(1)})^{[\gamma]}(s,x)-(u^{(2)})^{[\gamma]}(s,x))( u^{(1)}(s,x)-u^{(2)}(s,x))\,dx\, ds
	\\
\le& \varepsilon^{\gamma+1} \EE\int_0^{t} \vert u^{(1)}(s)-u^{(2)}(s)\vert_{L^{\gamma+1}}^{\gamma+1}\, ds
 +C\EE \int_0^{t} \vert u^{(1)}(s)-u^{(2)}(s)\vert^2_{H_2^{-1}}\,ds
\\
&{}+ \ep \EE\Vert u^{(1)}-u^{(2)}\Vert^{\gamma +1}_{L^{\gamma +1}(0,T;L^{\gamma +1})}\\
&{}\qquad+C(\ep)\left\{ \EE \Vert\xi_1-\xi_2\Vert^2_{\BH} \right\}^{\delta_1}\left\{ \EE \Vert  \mathbbm{1}_{\Omega_{2\kappa} }  \uk^{(2)}\xi_1^2\Vert^{\frac {\gamma +1}\gamma }_{L^{\frac {\gamma +1}\gamma }(0,{t\wedge \tau_{2\kappa}(\eta_2,\xi_2)};H^{-2}_{\frac {\gamma +1}\gamma })}\right\}^{\delta_2}
\\
&{}+\ep \EE\Vert u^{(1)}-u^{(2)}\Vert^{\gamma +1}_{L^{\gamma +1}(0,T;L^{\gamma +1})}+
C(\ep,\kappa)
 \EE\left\{ \sup_{0\le s\le T}\vert \uk^{(2)}\vert_{L^p}\right\}
\left\{\EE \Vert\xi_1-\xi_2\Vert_{L^m(0,T;L^{2r})}\right\}
 .
\end{align*} 

Term $I$ cancels with the corresponding term on the left hand side.
We may apply Gronwall's lemma in order to deal with the term $III$.
\end{proof}

\begin{proposition}\label{CC2}
For any initial condition $(u_0,v_0)$ satisfying Hypothesis \ref{init},
\begin{itemize}
\item[(i)] there exists $r=r(T,\gamma)>0$ such that for any $(\eta,\xi)\in \subX$, we have
\begin{align*}
\EE\int_0^T \vert \uk(s)\vert_{H_{\gamma+1}^r}^{\gamma+1}\, ds
\le C \Reins;
\end{align*}
\item[(ii)] we have that $t\mapsto u_\kappa (t)$ is $\P$-a.s. strongly continuous in $H_2^{-1}(\CO)$.
\end{itemize}
\end{proposition}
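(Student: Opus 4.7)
For part (i), I would start from Proposition~\ref{uniformlpbounds} taken at $p=1$. Since $(\eta,\xi)\in\subX$ and Hypothesis~\ref{init} controls $\EE|u_0|_{L^{\gamma+1}}^{\gamma+1}$ by a constant depending only on $\Reins$, the proposition delivers
\[
\EE\int_0^T\!\!\int_\CO |u_\kappa(s,x)|^{\gamma-1}|\nabla u_\kappa(s,x)|^2\,dx\,ds \;+\; \EE\int_0^T \|u_\kappa(s)\|_{L^{\gamma+1}}^{\gamma+1}\,ds \;\le\; C\,\Reins.
\]
Introducing $v_\kappa:=u_\kappa^{[(\gamma+1)/2]}$ and using the nonnegativity of $u_\kappa$ (Proposition~\ref{positivityu}), the chain rule yields the pointwise identities $|\nabla v_\kappa|^2 = \bigl(\tfrac{\gamma+1}{2}\bigr)^{2}\,|u_\kappa|^{\gamma-1}|\nabla u_\kappa|^2$ and $|v_\kappa|^2=|u_\kappa|^{\gamma+1}$, so that
\[
\EE\|v_\kappa\|_{L^2(0,T;H^{1}_2(\CO))}^{2}\;\le\;C\,\Reins.
\]

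The second step, which is the core of the argument, consists of transferring the $H^{1}_2$-estimate on $v_\kappa$ into a fractional Sobolev bound on $u_\kappa$ itself. Because $u_\kappa\ge 0$, we have the pointwise identity $u_\kappa = v_\kappa^{2/(\gamma+1)}$, so I would apply the Runst--Sickel type composition lemma collected in the appendix as Proposition~\ref{runst1}: with H\"older-subcritical exponent $\theta:=2/(\gamma+1)\in(0,1)$ it produces constants $r=r(T,\gamma)>0$ and $C>0$ such that for every nonnegative $v\in H^{1}_2(\CO)$ one has
\[
\|v^{\theta}\|_{H^{r}_{\gamma+1}(\CO)}^{\gamma+1}\;\le\;C\,\|v\|_{H^{1}_2(\CO)}^{2}.
\]
Applied pointwise in $s$ to $v_\kappa(s)$, integrated over $[0,T]$ and taken in expectation, this delivers the bound claimed in~(i). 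For part~(ii), the continuity of $t\mapsto u_\kappa(t)$ in $H^{-1}_2(\CO)$ is already built into Theorem~\ref{theou1}: the variational Liu--R\"ockner framework \cite[Theorem~5.1.3]{weiroeckner}, applied in the Gelfand triple $L^{\gamma+1}(\CO)\subset H^{-1}_2(\CO)\equiv H^{-1}_2(\CO)^{\ast}\subset L^{(\gamma+1)/\gamma}(\CO)$, yields a modification of $u_\kappa$ with $\P$-a.s.\ strongly continuous trajectories in the pivot space $H^{-1}_2(\CO)$, via the variational It\^o formula for $|\,\cdot\,|_{H^{-1}_2}^{2}$; statement~(ii) is then just the extraction of this property.

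The only truly non-routine ingredient is the composition inequality Proposition~\ref{runst1}. Its applicability rests on two points: nonnegativity of $u_\kappa$, which makes $v_\kappa^{2/(\gamma+1)}$ well defined, and the sub-Lipschitz character of the power $t\mapsto t^{2/(\gamma+1)}$ (this is where the condition $\gamma>1$ is used). The constants produced depend only on $\gamma$, $T$ and on $\CO$, so the final bound is uniform in $\kappa\in\NN$ and in $(\eta,\xi)\in\subX$, which is exactly what is needed to feed the result into the tightness argument in Step~IV of the proof of Theorem~\ref{mainresult}. Continuity in $H^{-1}_2(\CO)$ is essentially a byproduct of the variational existence theory and requires no extra work.
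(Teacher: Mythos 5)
Your argument reproduces the paper's proof: Proposition~\ref{uniformlpbounds} gives the a priori bounds, Proposition~\ref{runst1} (whose internal content is precisely the pointwise composition estimate on $u_\kappa = v_\kappa^{2/(\gamma+1)}$ that you invoke, with $\mu=2/(\gamma+1)<1$ being the sublinearity that uses $\gamma>1$) lifts these to fractional Sobolev regularity, and (ii) is the Liu--R\"ockner variational continuity. One small attribution slip: the term $\EE\int_0^T\|u_\kappa(s)\|_{L^{\gamma+1}}^{\gamma+1}\,ds$ is obtained from a \emph{second} application of Proposition~\ref{uniformlpbounds} with $p=\gamma$ (not $p=1$), which is exactly why Hypothesis~\ref{init} requires $\EE|u_0|_{L^{\gamma+1}}^{\gamma+1}<\infty$.
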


\begin{proof}
Let us prove (i) first.
We know from Proposition \ref{uniformlpbounds} with $p=1$, that for any $(\eta,\xi)\in \subX$ we have
\begin{align*} 
\EE \int_0^ T \left\vert\uk^{\left[\frac {\gamma-1}2\right]}(s) \nabla \uk(s)\right\vert^2_{L^2}\, ds \le C\Reins.
\end{align*} 
On the other side, we know by Proposition \ref{runst1} that we have for any $\theta<\frac 2{p+\gamma}$,
\begin{align*} 
\Vert\uk\Vert_{L^{p+\gamma}(0,T;H^{\theta}_{p+\gamma})}^{p+\gamma} \le C \left(\int_0^T \left\vert\uk^{\left[\frac {p+\gamma-2}2\right]}(s) \nabla \uk(s)\right\vert^2_{L^2}\, ds+\int_0^T \vert u\vert_{L^2}^{p+\gamma}\,ds\right),
\end{align*} 
where the LHS is bounded for $p=1$, noting that Proposition \ref{uniformlpbounds} with $p=\gamma$ gives
\[\EE\left[\sup_{t\in [0,T]}\vert u_\kappa(t)\vert^{\gamma+1}_{L^{\gamma+1}}\right]\le C.\]

The assertion (ii) follows from \cite[Theorem 4.2.5]{weiroeckner}.
\end{proof}

\subsection{The system \eqref{eq:cutoffu}--\eqref{eq:cutoffv} and uniform bounds on the stopping time}

Within this section let us assume that $\{(\bar{u}_\kappa,\bar{v}_\kappa)\;\colon\; \kk\in\NN\}$  the family of stochastic processes constructed in Step III of the proof of Theorem \ref{mainresult}. The constants $\Reins,\Rzwei$ and $\Rdrei$ are given as in \eqref{constantR1},  \eqref{constantR2}, and  \eqref{constantR3}, respectively.

\begin{proposition}\label{uniform1}
Fix $p\ge 1$ and suppose $u_0\in L^{p+1}(\CO )$. Then,
 there exists a constant $C_0(p,T)>0$ such that we have for all $\kk\in\NN$
\begin{align*} 
\lqq{\hspace{-4ex}\EE \left[\sup_{0\le s\le T} \vert\bar{u}_\kappa (s)\vert_{L^{p+1} }^{p+1}\right]
+ \gamma p(p+1)r_u\EE \int_0^ \TInt\int_\CO  \vert\bar{u}_\kappa(s,x)\vert^{p+\gamma-2} \vert\nabla \bar{u}_\kappa (s,x)\vert^2\, dx \, ds} &
\\\nonumber &{} +(p+1)\chi\EE \int_0^\TInt \int_\CO  \vert\bar{u}_\kappa(s,x)\vert^{p+1}\vert\bar{v}_\kappa(s,x)\vert^2\, dx \, ds \le C_0(p,T)\,\left( \EE\vert u_0\vert_{L^{p+1}}^{p+1}+1\right).
\end{align*} 
\end{proposition}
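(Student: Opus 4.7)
The plan is to combine the argument of Proposition~\ref{uniformlpbounds} with the piecewise construction of Step III of the proof of Theorem~\ref{mainresult}, splitting $[0,T]$ at $\bar\tau_\kappa\wedge T$. The key structural observation is that, by the iterative construction of $(\bar u_\kappa,\bar v_\kappa)$ and the definition $\tau_j^\ast=\inf\{s\ge0\colon h(u_j,v_j,s)\ge j\}$, on each glued sub-interval $[\bar\tau_{j-1},\bar\tau_j)$ the cutoff factor $\phi_j(h(u_j,v_j,\cdot))$ equals $1$, so that $(\bar u_\kappa,\bar v_\kappa)$ solves the \emph{uncut-off} Klausmeier system on $[0,\bar\tau_\kappa)$, while on $[\bar\tau_\kappa,T]$ the process $\bar u_\kappa$ solves the pure porous medium SPDE $d\bar u_\kappa=r_u\Delta \bar u_\kappa^{[\gamma]}\,dt+\sigma_1\bar u_\kappa\,d\bar W^1_\kappa$ (with no coupling).

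First, I apply It\^o's formula to $t\mapsto|\bar u_\kappa(t\wedge\bar\tau_\kappa\wedge T)|^{p+1}_{L^{p+1}}$, exactly as in the proof of Proposition~\ref{uniformlpbounds}, justifying the computation via Galerkin/mollifier approximation in the sense of \cite{weiroeckner}. Nonnegativity of $\bar u_\kappa$ and $\bar v_\kappa$ (Propositions~\ref{positivityu}, \ref{positivityv}) turns $\bar u_\kappa^{[p]}\bar u_\kappa\bar v_\kappa^2$ into $\bar u_\kappa^{p+1}\bar v_\kappa^2$, and $\phi_j\equiv1$ on each piece absorbs the cutoff. Integration by parts produces the gradient term $\gamma p(p+1)r_u\int|\bar u_\kappa|^{p+\gamma-2}|\nabla\bar u_\kappa|^2$ with the correct sign, while the It\^o correction $\tfrac{p(p+1)}{2}|\sigma_1\bar u_\kappa|^2_{\gamma(H_1,L^{p+1})}$ is controlled by the linear growth in Remark~\ref{LSSigma}. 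Taking expectation, applying the Burkholder--Davis--Gundy inequality (Appendix~\ref{sec:BDG}) to the local martingale term and Gronwall's lemma yields
\[
\EE\!\!\sup_{0\le s\le\bar\tau_\kappa\wedge T}\!\!|\bar u_\kappa(s)|^{p+1}_{L^{p+1}}+\gamma p(p+1)r_u\EE\!\!\int_0^{\bar\tau_\kappa\wedge T}\!\!\!\!\int_\CO\!|\bar u_\kappa|^{p+\gamma-2}|\nabla\bar u_\kappa|^2+(p+1)\chi\EE\!\!\int_0^{\bar\tau_\kappa\wedge T}\!\!\!\!\int_\CO\!\bar u_\kappa^{p+1}\bar v_\kappa^2\le C\bigl(\EE|u_0|^{p+1}_{L^{p+1}}+1\bigr).
\]

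Second, on $[\bar\tau_\kappa\wedge T,T]$ the porous medium SPDE for $\bar u_\kappa$ has no coupling and only linear multiplicative noise, so standard $L^{p+1}$-estimates (see e.g.\ \cite{BDPR2016} or, equivalently, Proposition~\ref{uniformlpbounds} with $\eta\equiv\xi\equiv 0$) applied with initial datum $\bar u_\kappa(\bar\tau_\kappa\wedge T)$ give
\[
\EE\!\!\sup_{s\in[\bar\tau_\kappa\wedge T,T]}\!\!|\bar u_\kappa(s)|^{p+1}_{L^{p+1}}+\gamma p(p+1)r_u\EE\!\!\int_{\bar\tau_\kappa\wedge T}^T\!\!\!\!\int_\CO\!|\bar u_\kappa|^{p+\gamma-2}|\nabla\bar u_\kappa|^2\le C\,\EE|\bar u_\kappa(\bar\tau_\kappa\wedge T)|^{p+1}_{L^{p+1}},
\]
with the right-hand side bounded by the first step. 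Analogously, the pure heat SPDE with multiplicative noise satisfied by $\bar v_\kappa$ on this interval yields $L^q$-moment bounds for $\bar v_\kappa$. The remaining coupling integral $(p+1)\chi\EE\int_{\bar\tau_\kappa\wedge T}^T\int_\CO \bar u_\kappa^{p+1}\bar v_\kappa^2\,dx\,ds$ carries no intrinsic dissipation, which I expect to be the main technical obstacle; it is controlled by H\"older's inequality,
\[
\int_{\bar\tau_\kappa\wedge T}^T\!\!\!\!\int_\CO\!\bar u_\kappa^{p+1}\bar v_\kappa^2\,dx\,ds\le\int_{\bar\tau_\kappa\wedge T}^T\!\!|\bar u_\kappa(s)|^{p+1}_{L^{(p+1)q_1}}|\bar v_\kappa(s)|^2_{L^{2q_2}}\,ds,
\]
for suitable conjugate exponents $q_1,q_2$ matched to the moment bounds of the two linear SPDEs, exploiting that $\bar u_\kappa$ and $\bar v_\kappa$ are driven by independent Wiener processes on $[\bar\tau_\kappa,T]$. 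Patching the two intervals together and taking the supremum yields the stated bound with $C_0(p,T)$ independent of $\kappa$.
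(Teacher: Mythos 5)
Your overall decomposition --- splitting at $\bar\tau_\kappa\wedge T$ and noting that on each glued sub-interval $[\bar\tau_{j-1},\bar\tau_j)$ the cutoff factor $\phi_j(h(u_j,v_j,\cdot))\equiv 1$, so that $(\bar u_\kappa,\bar v_\kappa)$ solves the \emph{un-truncated} system there --- is correct and matches the paper's intent (its one-line proof points to the It\^o computation of Proposition~\ref{uniformlpbounds} together with nonnegativity, and Fatou's lemma to justify the limiting procedures in the approximation). Your Step~1 is therefore sound; the nonnegativity from Propositions~\ref{positivityu} and~\ref{positivityv} survives the gluing and the continuations $y_1,y_2$, so $\bar u_\kappa^{[p]}\bar u_\kappa\bar v_\kappa^2=\bar u_\kappa^{p+1}\bar v_\kappa^2\ge 0$, as you say.

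The genuine gap is in what you call the main technical obstacle: the coupling integral over $[\bar\tau_\kappa\wedge T,T]$. There the equations for $\bar u_\kappa$ and $\bar v_\kappa$ are decoupled, so this term does not appear in the It\^o identity at all and must be bounded by hand. Your H\"older argument cannot close under the stated hypotheses. Taking $q_1>1$ forces you to control $\bar u_\kappa$ in $L^{(p+1)q_1}$, hence requires $u_0\in L^{(p+1)q_1}$ rather than $L^{p+1}$, so the right-hand side would no longer have the form $C(\EE|u_0|_{L^{p+1}}^{p+1}+1)$; taking $q_1=1$ forces $\bar v_\kappa\in L^\infty$, which is unavailable because Hypothesis~\ref{init} only gives $v_0\in H^\rho_2$ with $\rho<1-\tfrac d2<\tfrac d2$, so even after parabolic smoothing $\bar v_\kappa$ does not land in $L^\infty$ (the issue is sharp in $d=3$). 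Conditional independence given $\Fcal_{\bar\tau_\kappa}$ does not save this, because the obstruction is the spatial integrability, not a stochastic correlation.

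In fact it is unnecessary to bound that piece. If you inspect where Proposition~\ref{uniform1} is actually invoked (Step~IV of the proof of Theorem~\ref{mainresult}, estimate \eqref{est2}, and the subsequent bound on $\EE(\int_0^{\bar\tau_\kappa\wedge T}u_\kappa^2 v_\kappa^2\,ds)^l$), every instance uses only the coupling integral up to time $\bar\tau_\kappa\wedge T$; the extension to $[0,T]$ is applied only to the $\sup$ and gradient terms, via the pure porous-medium SPDE theory as in \cite{BDPR2016}, which your Step~2 covers. So the right fix is to state the coupling term with $\int_0^{\bar\tau_\kappa\wedge T}$ and drop Step~3 entirely, rather than try to make the H\"older/independence estimate work.
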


\begin{proof}
Follows from an application of Fatou's lemma and nonnegativity of $u_\kappa$ and $v_\kappa$ in the proof of Proposition \ref{uniformlpbounds}.
\end{proof}

\begin{proposition}\label{propvarational2}
Let $\rho<1-\frac d2$ and assume that the Hypotheses of Theorem \ref{mainresult} hold. There exists a generic constant ${C}(T)>0$, a number  $l\ge \frac 2{\vert\rho\vert}$ and $\delta_1,\delta_2\in(0,1)$ with $\delta_1+\delta_2=1$ such that
for all $\kk\in\NN$
we have
\begin{align} \label{estimatesol3}
\EE  \Vert\bar{v}_\kappa\Vert_{\BH}^{m_0}
\le&
C(T)\left(
 \EE\vert v_0\vert_{H^{-\delta_0}_{m}}^{m_0}+ \Reins^{\delta_1}\,
\Rzwei^{\delta_1}\right).
\end{align} 
\end{proposition}
Before proving Proposition \ref{propvarational2}, we consider the following lemma, which will be essential.
\begin{tlemma}\label{dasauch}
For all $1\le \alpha<2$, $-\rho<\frac {2(2-\alpha)}\alpha$, and
for any $r^\ast\ge 1$
there exist generic constants ${C}(T)>0$
and  $\delta_1,\delta_2\in(0,1)$ with $\delta_1+\delta_2=1$, such that for any pair of nonnegative processes $\eta,\xi:[0,T]\times \Omega\times \CO \rightarrow[0,\infty)$ we have
	\begin{align}  \label{Eq:Productestimate} \hspace{+2ex}
\EE \Vert \eta\xi^2\Vert^ {r^\ast}_{L^\alpha(0,T;L^1)}
\le&
 C(T)
 \left\{\mathbb{E}\left(\int_0^T\vert\eta^p(s) \xi^2(s)\vert_{L^1}\,ds \right)^\frac{ r^\ast p }{\alpha }\right\}^{\delta_1}
 \left(\mathbb{E} \Vert\xi \Vert_{\BH}^{r^\ast  }\right)^{\delta_2}.\end{align} 

\end{tlemma}

 \begin{proof}
\renewcommand{\pst}{p}
\renewcommand{\ns}{2}
\renewcommand{\LLm}{{r^\ast}}
Let us set $\beta=\frac 1 {\pst  }$, $1-\beta=\frac {\pst  -1 }{\pst  }=\frac{1}{p'}$.
 Observe also that
 	$$
 	\eta (s) \xi ^2(s) = \eta (s) \xi ^{2\beta } (s)\, \xi ^ {2(1-\beta)}(s),\quad s\in [0,T].
 	$$
Assume $\alpha<p$ and fix $\beta=\frac 1p$, $q\alpha/p=1$ and $q'=\frac {p}{p-\alpha}$
 	\begin{align*} 
 	\left\Vert \eta  \xi ^ {2}\right\Vert_{L^\alpha(0,T;L^1)}^\LLm
 	\le& \left( \int_0^ T \left( \int_\CO  \left\vert \eta ^ {p   } (s,x) \xi ^ {2\beta p }(s,x)\right\vert\, dx\right)^\frac \alpha {p} \left( \int_\CO  \left\vert  \xi ^ {2(1-\beta )p'}(s,x)\right\vert\, dx\right)^ \frac \alpha{p'} \, ds\right)^\frac \LLm \alpha
 	\\
 	\le
 	& \,\left( \int_0^ T  \int_\CO  \left\vert \eta ^ {\pst   }(s,x) \xi ^ { {\ns }}(s,x)\right\vert\, dx\, ds \right)^\frac \LLm{ q}
  \left(\int_0^ T  \left\vert  \xi (s)\right\vert_{L^ {(1-\beta)p'2}} ^  {\frac {\alpha q'}{p'}} \, ds\right)^\frac \LLm{q'}.
  	\\
 	\le
 	& \,\left\Vert \eta ^ {\pst   } \xi ^ { {\ns }}\right\Vert_{L^1(0,T;L^1)}^\frac \LLm{ q}
  \left\Vert\xi\right\Vert_{L^ {\frac {\alpha q'}{p'}} (0,T;L^ {(1-\beta)p'2}))}^{\LLm\frac \alpha {p'}}
    	\\
 	\le
 	& \,\left\Vert \eta ^ {\pst   } \xi ^ { {\ns }}\right\Vert_{L^1(0,T;L^1)}^\frac \LLm{ q}
  \left\Vert\xi\right\Vert_{L^ {\frac {\alpha (p-1)}{p-\alpha}} (0,T;L^ {2})}^{\LLm\frac \alpha {p'}}
.
	\end{align*} 
Here, we need $\alpha/p'< 1$ which gives $2\le p<\frac{\alpha}{\alpha-1}$.
Also, if
$$
\frac d2 -\rho<\frac {2({p-\alpha})} {\alpha (p-1)}+\frac d2,
$$
then  	\begin{align*} 
 	\left\Vert \eta  \xi ^ {2}\right\Vert_{L^\alpha(0,T;L^1)}^\LLm
 	\le&  \,\left\Vert \eta ^ {\pst   } \xi ^ { {\ns }}\right\Vert_{L^1(0,T;L^1)}^\frac \LLm{ q}
  \left\Vert\xi\right\Vert_{\BH}^{\LLm\frac \alpha {p'}}.
 \end{align*} 
 Taking the expectation and applying the H\"older inequality where we have to take into account that $\alpha/p'< 1$
 we get the assertion.
\end{proof}

\begin{proof}[Proof of Proposition \ref{propvarational2}:]
Let us consider the following equation for a locally integrable and progressively measurable $t\mapsto F(t)$,
\begin{align} \label{wwwoben}
 \qquad d w(t)=&r_v\Delta   w(t)\,dt+F(t)\, dt+\sigma_2 w(t)\, dW_2(t),\quad w(0)=w_0\in H^\rho_2(\CO).
\end{align} 
Following the proof of Proposition \ref{propvarational} verbatim, we find that
\begin{align} \label{zweites}
\left\vert\int_0^ t e^{r_v\Delta(t-s)}F(s)\,ds\right\vert_{H^\rho_2} \le C(T)\, \left(\int_0^ t  \left\vert F(s)\right\vert_{L^1}^\textkappa\, ds\right)^\frac 1 \textkappa.
\end{align} 
In particular, for $\alpha$ as in the Technical Lemma \ref{dasauch}, we have
$2-\frac 2\alpha>\frac d2+\rho$ and $\alpha<2$. This gives as condition for $\rho$, $\rho<1-\frac d2$. In addition, setting $p=2$ we
need $-\rho<\frac {2(2-\alpha)}\alpha$, which is not a restriction.
However, due the hypotheses we find some $\alpha<2$ such that the first inequality is satisfied.
 Note, that due to the condition in Hypothesis \ref{init}, there exists some $\mu=\alpha\ge 1$ such that $\rho$ satisfies the assumption above and those of the Technical Lemma \ref{dasauch}.
Setting
$$F=\phi_\kappa(h(\eta,\xi,\cdot))\eta\xi^2,$$
we obtain by applying Technical Lemma \ref{dasauch} that there exists some constant $C(T)>0$ and
$\delta_1,\delta_2\in(0,1)$, $\delta_1+\delta_2=1$ such that
\begin{align*} 
\EE \left(\int_0^ T \vert F(s)\vert_{L^1}^\mu\, ds\right)^{\frac {m_0}\mu}
\le& C(T)\Reins^{\delta_1} \Rzwei^{\delta_2}.
\end{align*} 
It remains to calculate the norm in $L^2(0,T;H^{\rho+1}_2(\CO))$.
By standard calculations (i.e. applying the smoothing property and the Young inequality for convolution), we get
for $\frac 32 \ge \frac 1\mu+\frac 1\kappa$ and $\delta \kappa/2<1$
\begin{align*} 
 \left\Vert\int_0^ \cdot e^{r_v\Delta(\cdot-s)}F(s)\,ds\right\Vert_{L^2(0,T;H_2^{\rho+1})}\le C\,  \Vert F\Vert_{L^\mu(0,T;H_2^{\rho+1-\delta})}
.
\end{align*} 
The embedding $L^1(\CO) \hookrightarrow H^{\rho+1-\delta}_2(\CO)$ for $\delta-(\rho+1)>\frac d2$,
gives
\begin{align*} 
 \Vert F\Vert_{L^\mu(0,T;H_2^{\rho-1})}\le  \Vert F\Vert_{L^\mu(0,T;L^1)},
 \end{align*} 
for  and similarly to before we know by hypothesis \ref{init}, that there exists some $\mu=\alpha$ such that $\delta-(\rho+1)>\frac d2$,
 $\frac 32 \ge \frac 1\mu+\frac 1\kappa$, $\delta \kappa/2<1$, and $\alpha(p-1)/(p-\alpha)\le m_0$. Therefore, we get
 by  the Technical Lemma \ref{dasauch}
\begin{align*} 
\EE \left\Vert\int_0^ \cdot e^{r_v\Delta(\cdot-s)}F(s)\,ds\right\Vert^{m_0}_{L^2(0,T;H_2^{\rho+1})}\le C(T)\Reins^{\delta_1} \Rzwei^{\delta_2}
.
\end{align*} 
\end{proof}

\section{Pathwise uniqueness of the solution}\label{pathwise}

\begin{proof}[Proof of Theorem \ref{thm_path_uniq}]

Let us remind, since $(u_1,v_1)$ and $(u_2,v_2)$ are solutions to the system
  \eqref{eq:uIto}--\eqref{eq:vIto}  with
$\PP(u_1(0)=u_2(0))=1$ and $\PP(v_1(0)=v_2(0))=1$, we can write for $i=1,2$,
\begin{align} 
d{u}_i(t)=& \Big( r_u \Delta u_i^{[\gamma]} (t)
-\chi  u_i(t) v^2_i(t)\big)\Big)\, dt+\sigma_1 u_i(t)dW_1(t)\,\quad t>0,\label{sysu12.11}
\\
d{v}_i(t) =& \big(r_v \Delta v_i(t)+ u_i(t) v^2_i(t)\Big)\, dt +\sigma_2 v_i(t) dW_2(t)\,\quad t>0.\label{sysv12.11}
\end{align}

In the first step we will introduce a family of stopping times $\{\tau_N\;\colon\;N\in\NN\}$,
and show that on the time interval $[0,\tau_N]$ the solutions $u_1$ and $u_2$, respective, $v_1$ and $v_2$, are indistinguishable. Here,
in the second step, we will show that $\PP\left( \tau_N<T\right)\to 0$  for $N\to\infty$. From this follows that $u_1$ and $u_2$ are indistinguishable on the time interval $[0,T]$.

By Remark \ref{rem:dimone}, $d=1$. Let us remind that $\delta_0\in(0,\frac 1\gamma)$, $\rho\in[0,\frac 12)$, $\frac 12<\frac{2}{m}+\frac{1}{m_0}+\rho\le \frac{3\gamma+2}{2\gamma+2}+\rho$.
\paragraph{Step I.}
Let us introduce the stopping times  $\{ \tau_N\;\colon\;N\in\NN\}$ as follows:
\begin{align*} 
\tau_{N,i}^1:=&\inf\{t \geq 0: \Vert\mathbbm{1}_{[0,t]}v_i\Vert_{\mathbb{H}_\rho} \geq N \} \wedge T,\quad i=1,2,
\\
\tau_{N,i}^2:=&\inf\{t \geq 0: \Vert\mathbbm{1}_{[0,t]}u_i\Vert_{L^{2\gamma}(0,T;H^{\delta_0}_2)}
 \geq N \} \wedge T,\quad i=1,2,
\end{align*} 
and $\tau_N:=\min_{i=1,2}(\tau_{N,i}^1,\tau_{N,i}^2)$.

The aim is to show
that $(u_1,v_1)$ and $(u_2,v_2)$ are indistinguishable on the time interval $[0,\tau_N]$.
\begin{remark}\label{muss_noch}
Note, due to the assumptions in Theorem \ref{thm_path_uniq} and by Corollary \ref{stroock}, and Proposition \ref{uniform1} we know that
there exists a constant $C>0 $ such that  for any solutions $(u,v)$  to  \eqref{eq:uIto}--\eqref{eq:vIto}
we have
$$\EE  \Vert v\Vert_{\mathbb{H}_\rho}^{m_0}\le C \quad\mbox{and} \quad \EE\Vert u\Vert_{L^{2\gamma}(0,T;H^{\delta_0}_2)}^{2\gamma}\le C.
$$
\end{remark}

Fix  $N\in\NN$.
To get uniqueness on $[0,\tau_N]$ we first stop the original solution processes at time $\tau_N$ and extend the processes $(u_1,v_1)$ and $(u_2,v_2)$
by other processes to the whole interval $[0,T]$.  For this purpose,
let $(y_1,z_1)$ be  solutions to
\begin{align} \label{eq11}
 \left\{\barray dy_1(t) =& r_u\Delta y_1^{[\gamma]}(t)+\sigma_1 y_1 (t)\,d\theta_{\tau_N}\circ W_1(t),\quad t\in[ \tau_N\wedge T ,T],
\\
 dz_1(t) =& r_v\Delta z_1(t)- z_1(t)+ \sigma_2 z_1 (t)\,d\theta_{\tau_N}\circ W_2(t),\quad t\in[ \tau_N\wedge T ,T],
\earray\right.  \end{align} 
with initial data $ y_1(0):=u_1(\tau_N)$,  $ z_1(0):=v_1(\tau_N)$, and
let $(y_2,z_2)$ be  solutions to
\begin{align} \label{eq22}
 \left\{\barray dy_2(t) =& r_u\Delta y_2^{[\gamma]}(t)+\sigma_1 y_2 (t)\,d\theta_{\tau_N}\circ W_1(t),\quad t\in[ \tau_N\wedge T ,T] ,
\\
 dz_2(t) =& r_v\Delta z_2(t)- z_2(t)+ \sigma_2 z_2 (t)\,d\theta_{\tau_N}\circ W_2(t),\quad t\in[ \tau_N\wedge T ,T],
\earray\right.
 \end{align} 
with initial data $ y_2(0):=u_2(\tau_N)$ and  $ z_2(0):=v_2(\tau_N)$. Here, $\theta$ denotes the shift operator, i.e.\ $\theta_\tau\circ W_i(t)=W_i(t+\tau)-W_i(\tau)$, $i=1,2$.
Since $(u_1,v_1)$ and $(u_2,v_2)$ are continuous in $H^{-1}_2(\CO)\times L^2(\CO)$, $(u_1(\tau_N),v_1(\tau_N))$ and $(u_2(\tau_N),v_1(\tau_N))$ are well-defined and
 belong $\PP$--a.s.\ to $H^{-1}_2(\CO)\times H^{-\delta_0}_2(\CO)$.
By \cite[Theorem 2.5.1]{BDPR2016}, we know that there exists a unique solutions $y_1$ and $y_2$ to  \eqref{eq11} belonging $\PP$-a.s.\ to $C([0,T];H^{-1}_2(\CO))$.  Since $(e^{t(r_v\Delta - \operatorname{Id})})_{t\ge 0}$ is an  analytic semigroup on  $H^\rho_2(\CO)$,
the existence of unique solutions  $z_1$ and $z_2$ to \eqref{eq11} and \eqref{eq22} belonging $\PP$-a.s.\ to $C([0,T];H^{-\delta_0}_2(\CO))$
can be shown by standard methods, cf. \cite{DaPrZa:2nd}.
Now, let us define two  processes $(\bar u_1,\bar v_1) $ and $(\bar u_2,\bar v_2)$ which are
equal to $(u_1,v_1) $ and $(u_2,v_2) $ on the time interval $[0,\tau_N)$ and
follow the processes  $(y_1,z_1)$ and $(y_2,z_2)$ afterwards.
In particular, let
$$
\bar u_1  (t) = \bcase u_1(t) & \mbox{ for } 0\le t< \tau_N,\\
y_1 (t) & \mbox{ for } \tau_N\le  t \le T,\ecase
\qquad
\bar v_1  (t) = \bcase v_1(t) & \mbox{ for } 0\le t< \tau_N,\\
z_1 (t) & \mbox{ for } \tau_N\le  t \le T,\ecase
$$
and
$$
\bar u _2  (t) = \bcase u_2 (t) & \mbox{ for } 0\le t< \tau_N,\\
y_2 (t) & \mbox{ for } \tau_N\le  t \le T,\ecase
\qquad
\bar v _2  (t) = \bcase v_2 (t) & \mbox{ for } 0\le t< \tau_N,\\
z_2 (t) & \mbox{ for } \tau_N\le  t \le T.\ecase
$$
Note, that  $( u_1, v_1) $ and $(u_2, v_2)$ solve on $[0,\tau_N)$
the
equation corresponding  to   \eqref{eq:uIto}--\eqref{eq:vIto}, that is, for $i=1,2$,
\begin{eqnarray} 
d{u}_i(t)&=& \Big( r_u \Delta u_i^\gamma (t)
-\chi  u_i(t) v^2_i(t)\Big)\, dt+\sigma_1 u_i(t)dW_1(t)\,\quad t>0,\label{sysu12.111}
\\
d{v}_i(t) &=& \big(r_v \Delta v_i(t)+ u_i(t) v^2_i(t)\Big)\, dt +\sigma_2 v_i(t) dW_2(t)\,\quad t>0.\label{sysv12.111}
\end{eqnarray}

\renewcommand{\tt}{{t}}
\newcommand{\bau}{\bar u}
\newcommand{\bav}{\bar v}

\paragraph{Step II.}
Let $\alpha=\rho+\frac 12$
(which implies $\alpha\ge \frac{1}{2}$ due to the assumption on $\rho$) and let  $\frac 12>\delta>\delta_0$.
Our goal is to show that $(u_1,v_1)$ and $(u_2,v_2)$ are indistinguishable on the interval $[0,\tau_N]$.
Applying the It\^o formula while setting
$$
\Phi(t):=\EE\left[\sup_{0\le s\le t}\vert\bau _1(s)-\bau _2(s)\vert^2_{H^{-1}_2}\right]
$$
over $[0,\tau_N]$ and taking into account that $(\bau _1,\bav_1)$ and $(\bau _2,\bav_2)$ are solutions to system  \eqref{eq:uIto}--\eqref{eq:vIto}
we obtain by standard calculations  for $0\le \tt\le \tau_N$
\begin{align}\label{here_tt_12}
&\EE \Big[\vert\bau _1({\tt })-\bau _2({\tt })\vert^2_{H^{-1}_2}\Big]
 +r_u \EE \Big[\int_0^{\tt } \vert\bau _1(s)-\bau _2(s)\vert^{\gamma+1}_{L^{\gamma+1}}ds
\Big]
\\
& \leq \chi\EE \Big[ \Big\vert
\int_0^{\tt } \int_{\CO} (-\nabla)^{-1}
\Big(\bau _1(s,x) \bav ^2_1(s,x)-\bau _2(s,x)\bav ^2_2(s,x)
\Big)
(-\nabla)^{-1}\big(\bau _1(s,x)-\bau _2(s,x)\big)dx\,ds
\Big\vert\Big]\notag
\\
&{}+ C\EE\int_0^ \tt \vert \bau _1({s })-\bau _2({s })\vert_{H^{-1}_2}^2\,ds.\notag
\end{align}
Furthermore, we obtain
\begin{align*}
&\EE \Big[ \Big\vert
\int_0^{\tt } \int_{\CO} (-\nabla)^{-1}
\Big(\bau_1(s,x) \bav ^2_1(s,x)-\bau _2(s,x) \bav ^2_2(s,x)
\Big)
(-\nabla)^{-1}\big(\bau _1(s,x)-\bau _2(s,x)\big)dx\,ds \Big\vert\Big]\notag\\
& \leq \EE \Big[
{\Big\vert \int_0^{\tt } \int_{\CO} (-\nabla)^{-1}
\big(\bau _1(s,x)-\bau _2(s,x)\big) \bav ^2_1(s,x)
(-\nabla)^{-1}\big(\bau _1(s,x)-\bau _2(s,x)\big)dx\,ds\Big\vert}\Big]
\notag\\
&\quad + \EE\Big[ {\Big\vert
\int_0^{\tt } \int_{\CO} (-\nabla)^{-1}\Big[ \bau _2(s,x) \bav _2(s,x)
 \big(\bav _1(s,x)-\bav _2(s,x)\big)\Big] (-\nabla)^{-1}\big(\bau _1(s,x)-\bau _2(s,x)\big)dx\,ds\Big\vert} \Big]\notag
\\
& =: I_1(t)+ I_2(t).\notag\end{align*}
Now we have by Proposition \ref{prop:runst190} with $\alpha\ge \frac{1}{2}=\frac{d}{2}$,
\begin{align*} 
\vert(\bar u _1-\bar u_2) \bav _1^2\vert_{H^{-1}_2}\le \vert\bau_1-\bau _2\vert_{H^{-1}_2}\vert \bav_1^2\vert_{H^{\alpha}_2}
\end{align*} 
Since $H^\alpha_2(\CO)$ is a Banach algebra (see \cite[Theorem 2-(18), p. 192]{runst}), we have
\begin{align*} 
\vert(\bau  _1-\bau _2) \bav_1^2\vert_{H^{-1}_2}\le \vert\bau _1-\bau _2\vert_{H^{-1}_2}\vert \bav_1\vert^2_{H^{\alpha}_2}.
\end{align*} 
Hence, we get by applying Young's inequality for $\ep>0$
\begin{align*} 
 I_1(t)
  \le&\EE\int_0^ \tt \vert\bau _1(s)-\bau _2(s)\vert_{H^{-1}_2}^2 \vert \bav_1(s)\vert^2_{H^{\alpha}_2}\, ds
\\
  \le&
\ep  \EE\int_0^ \tt \vert\bau _1(s)-\bau _2(s)\vert_{H^{-1}_2}^2 \vert \bav _1(s)\vert^4_{H^{\alpha}_2}\, ds
+C(\ep)  \EE\int_0^ \tt \vert\bau _1(s)-\bau _2(s)\vert_{H^{-1}_2}^2\, ds
\\
  \le&
\ep \EE\left[\sup_{ 0\le s\le \tt} \vert\bau _1(s)-\bau _2(s)\vert_{H^{-1}_2}^2\right]\times\EE\int_0^ \tt\vert \bav _1(s)\vert^4_{H^{\alpha}_2}\, ds
\\
&\qquad {}+C(\ep)  \EE\int_0^ \tt \vert\bau _1(s)-\bau _2(s)\vert_{H^{-1}_2}^2\, ds.
\end{align*} 
Since $\alpha= \rho+\frac 12$, we have $\Vert\bav_1\Vert_{L^4(0,T;H^\alpha_2)}\le \Vert\bav_1\Vert_{\mathbb{H}_\rho}$.
Next, note that $\vert\bau_2(\bav_1^2-\bav^2_2)\vert\le \vert\bau_2\bav_1(\bav_1-\bav_2)\vert+\vert\bau_2\bav_2(\bav_1-\bav_2)\vert$. Similarly as above we get for $\delta_1<\delta$ and $\delta_2=\delta<\frac 1 \gamma$ and $\delta_3>\frac 12=\frac{d}{2}$ (see Proposition \ref{prop:runst190},
\cite[Theorem 2, p. 200]{runst} and the identities  on  \cite[Proposition, p. 14]{runst}) for $i=1,2$
\begin{align} \label{uselater}\hspace{+6ex}
\vert\bau_2\bav_i(\bav_1-\bav_2)\vert_{H^{-1}_2}\le C_1\vert\bav_1-\bav_2\vert_{H^{-\delta_1}_2}\vert\bau_2\bav_i\vert_{H^{\delta}_2}\le
C_2\vert\bav_1-\bav_2\vert_{H^{-\delta_1}_2}\vert\bau_2\vert_{H^{\delta_2}_2}\vert\bav_i\vert_{H^{\delta_3}_2}.
\end{align} 
By  Young inequality,
 we know that for $\delta_3\le \frac {\gamma}{\gamma+1}+\rho$ and $-\delta_0>-\frac 1 \gamma$ and  for any $\ep>0$
 there exists a constant $C(\ep)>0$ such that
\begin{align*} 
I_2(t) \le& C(\ep)\EE\int_0^ t  \vert\bav _1(s)-\bav _2(s)\vert^2_{H^{-\delta_0}_2}\, ds +
\ep\EE\left(\int_0^ t \vert\bau _1(s)\vert_{H^{\delta_2}_2}^{2\gamma}\, ds \right)^\frac 1 {\gamma}\,
\\
&{}\times\EE \left(\int_0^ t \Big(\vert\bav _1(s)\vert^{2\frac {\gamma+1}\gamma}_{H^{\delta_3}_2}+\vert\bav _2(s)\vert^{2\frac {\gamma+1}\gamma}_{H^{\delta_3}_2}\big)\, \, ds\right)^\frac \gamma {\gamma+1}\EE\left[\sup_{0\le s\le t} \vert\bav _1(s)-\bav _2(s)\vert^2_{H^{-\delta_0}_2}\right]
\\
\le& C(\ep)\EE\int_0^ t  \vert\bav _1(s)-\bav _2(s)\vert^2_{H^{-\delta_0}_2}\, ds
\\&{}+\EE\Vert\bau _1\Vert_{L^{2\gamma}(0,T;H^{\delta_2}_2)} \left( \EE\Vert\bav _1\Vert_{\mathbb{H}_\rho} + \EE\Vert\bav _2\Vert_{\mathbb{H}_\rho} \right)\EE\left[\sup_{0\le s\le t} \vert\bav _1(s)-\bav _2(s)\vert^2_{H^{-\delta_0}_2}\right].
\end{align*} 
The third term in \eqref{here_tt_12} can be handled by the Burkholder-Davis-Gundy inequality.
Since $t\le \tau_N$ and $\delta_2<\frac 1\gamma$ we know that for any $\ep>0$ there exists a constant $C(\ep)>0$ such that
\begin{align*} 
\Phi(t) \le&\ep_1 N^2 \EE\left[\sup_{0\le s\le t}  \vert\bav _1(s)-\bav _2(s)\vert^2_{H^{-\delta_0}_2}\right]+\EE\int_0^ t
 \sup_{0\le r\le s}  \vert\bav _1(r)-\bav _2(r)\vert^2_{H^{-\delta_0}_2}\, ds
\\&+\ep N \EE\left[\sup_{ 0\le s\le \tt} \vert\bau _1(s)-\bau _2(s)\vert_{H^{-1}_2}^2\right]
+C(\ep)  \EE\int_0^ \tt \vert\bau _1(s)-\bau _2(s)\vert_{H^{-1}_2}^2\, ds
.\end{align*}

\paragraph{Step III.}
Let us set
\begin{align*} 
\Psi(t):=& \EE\left[\sup_{0\le s\le t} \Vert \bav _1(s)-\bav _2(s)\Vert_{H^{-\delta_0}_2}^{2}\right]+\EE\Vert \bav _1-\bav _2\Vert_{L^{2}(0,t;H^{-\delta_0+1}_2)}^{2}
.
\end{align*} 
Then we get by standard calculations for $\tilde \delta<1$
\begin{align*} 
&\EE\left[\sup_{0\le s\le t} \Vert \bav _1(s)-\bav _2(s)\Vert_{H^{-\delta_0}_2}^{2}\right]
+\EE\Vert \bav _1-\bav _2\Vert_{L^{2}(0,t;H_2^{-\delta_0+1})}^{2}\\
\le& C\EE\Vert\bau _1\bav _1^2-\bau _2\bav _2^2\Vert_{L^{2}(0,t;H_2^{-\delta_0-\tilde \delta})}^{2}
\\ &+ \EE\left[\sup_{0\le s\le t} \left\vert \int_0^ \tt \sigma_2 \left( \bav _1({s })-\bav _2({s })\right)dW_2(s)\right\vert^2_{H^{-\delta_0}_2}\right]
.\end{align*} 
Next, since $\tilde \delta$ can be chosen such that $\delta_0\ge 1-\tilde \delta$ we can use  \eqref{uselater} and obtain for $\delta_1<\delta$ and $\delta_2=\delta<\frac 1 \gamma$ and $\delta_3>\frac 12=\frac d2$
\begin{align*} 
 \vert\bau _1\bav  _1^2-\bau _1\bav _2^2\vert_{H^{-\delta_0-\tilde \delta}_2}
\le& C_
1\vert\bav _1-\bav _2\vert_{H^{-\delta_1}_2}\vert\bau _2\vert_{H^{\delta_2}_2}\left(\vert\bav _1\vert_{H^{\delta_3}_2}+\vert\bav _2\vert_{H^{\delta_3}_2}\right)
.
\end{align*} 
For $\alpha >\min(1+-\delta_0-\delta+\frac d2,1,\frac d2)$ and $\tilde \delta<1$ we have by
Proposition \ref{prop:runst190} and  \cite[p. 192, Theorem 2-(18)]{runst}
\begin{align*} 
\vert(\bau _1-\bau _2)\bav _1^2\vert_{H^{-\delta_0-\tilde \delta}_2}\le \vert\bau _1-\bau _2\vert_{H^{-1}_2}\vert\bav _1^2\vert_{H^{\alpha}_2}\le \vert\bau _1-\bau _2\vert_{H^{-1}_2}\vert\bav _1\vert^2_{H^{\alpha}_2}.
\end{align*} 
The third term can be handled by an application of the Burkholder-Davis-Gundy inequality.
The Young inequality implies that for any $\ep_1,\ep_2>0$ there exist constants $C(\ep_1),C(\ep_2)>0$ such that we have
\begin{align*} 
\lqq{\Psi(t)
\le  \EE\vert\bav _1(0)-\bav _2(0)\vert^2_{H^{-\delta_0}_2}}
&\\
&{}+
 \EE\left(\int_0^ t \vert\bau _1(s)\vert^2_{H^{\delta_2}_2}\,\left( \vert\bav _1(s)\vert^2_{H^{\delta_3}_2} +\vert\bav _2(s)\vert^2_{H^{\delta_3}_2}\right)\vert\bav _1(s)-\bav _2(s)\vert^2_{H^{-\delta_0}_2}\, ds\right)
\\
&{}+
 \EE\left(\int_0^ t  \vert\bau _1(s)-\bau _2(s)\vert^{2}_{H^{-1}_2}\, \vert\bav _1(s)\vert^{4}_{H^{\alpha}_2}\, ds\right)
\\
\le& \EE\vert v_1(0)-v _2(0)\vert^2_{H^{-\delta_0}_2}\\
& +
 \ep_1
 \EE\left(\int_0^ t \vert\bau _1(s)\vert^2_{H^{\delta_2}_2} ,\left( \vert\bav _1(s)\vert^2_{H^{\delta_3}_2} +\vert\bav _2(s)\vert^2_{H^{\delta_3}_2}\right) \vert\bav _1(s)-\bav _2(s)\vert^2_{H^{-\delta_0}_2}\, ds\right)
\\&{}+
C(\ep_1)\EE \left(\int_0^ t\vert\bav _1(s)-\bav _2(s)\vert^2_{L^2}\, ds\right)
\\
&{}+
\ep_2 \EE\left(\int_0^ t  \vert\bau _1(s)-\bau _2(s)\vert^2_{H^{-1}_2}\, \vert\bav _1(s)\vert^{4}_{H^{\alpha}_2}\, ds\right)
+  C(\ep_2)\EE\left(\int_0^ t  \vert\bau _1(s)-\bau _2(s)\vert^2_{H^{-1}_2}\,\, ds\right).
\end{align*} 
\begin{align*} 
I_2(t) \le&
\EE\left(\int_0^ t \vert\bau _1(s)\vert_{H^{\delta_2}_2}^{2\gamma}\, ds \right)^\frac 1 {\gamma}\,
\\
&{}\times \EE\left(\int_0^ t \Big(\vert\bav _1(s)\vert^{2\frac {\gamma+1}\gamma}_{H^{\delta_3}_2}+\vert\bav _2(s)\vert^{2\frac {\gamma+1}\gamma}_{H^{\delta_3}_2}\big)\, \, ds\right)^\frac \gamma {\gamma+1}\EE\left[\sup_{0\le s\le t} \vert\bav _1(s)-\bav _2(s)\vert^2_{H^{-\delta_0}_2}\right]
\\
\le&\EE\Vert\bau _1\Vert_{L^{2\gamma}(0,T;H^{\delta_2}_2)} \left( \EE\Vert\bav _1\Vert_{\mathbb{H}_\rho} +\EE \Vert\bav _2\Vert_{\mathbb{H}_\rho} \right)\EE\left[\sup_{0\le s\le t} \vert\bav _1(s)-\bav _2(s)\vert^2_{H^{-\delta_0}_2}\right].
\end{align*} 
Since $\delta_0\in(0,1)$ and $\alpha=\rho+\frac 12$,
\begin{align*} 
\Psi(t)
\le& \EE\vert\bav  _1(0)-\bav _2(0)\vert^2_{H^{-\delta_0}_2}+
\ep_1\EE \Vert\mathbbm{1}_{[0,t]}\bau _1\Vert_{L^{2\gamma}(0,T;H^{\delta_2}_2)}  \,  \left(\EE \Vert\bav _1\Vert_{\mathbb{H}_\rho} +\EE \Vert\bav _2\Vert_{\mathbb{H}_\rho} \right) \\
&\qquad\times\EE \left[\sup_{0\le s\le t} \vert\bav _1(s)-\bav _2(s)\vert^2_{H^{-\delta_0}_2}\right]
\\&{}+
C(\ep_1) \EE\left(\int_0^ t\vert\bav _1(s)-\bav _2(s)\vert^2_{H^{-\delta_0}_2}\, ds\right)
\\
&{}+
\ep_2 \EE\left[\sup_{0\le s\le t}\vert\bau _1(s)-\bau _2(s)\vert^2_{H^{-1}_2}\right]\EE \Vert\mathbbm{1}_{[0,t]} \bav _1\Vert_{L^4(0,T;H^{\alpha}_2)}^2
\\&+ C(\ep_2)\EE\int_0^ t  \vert\bau _1(s)-\bau _2(s)\vert^2_{H^{-1}_2}\,\, ds
\\
\le& \EE\vert v _1(0)-v_2(0)\vert^2_{H^{-\delta_0}_2}+
\ep_1\EE \Vert\mathbbm{1}_{[0,t]}\bau _1\Vert_{L^{2\gamma}(0,T;H^{\delta_2}_2)}  \,  \left(\EE \Vert\bav _1\Vert_{\mathbb{H}_\rho} + \EE\Vert\bav _2\Vert_{\mathbb{H}_\rho} \right) \\
&\qquad\times\EE \left[\sup_{0\le s\le t} \vert\bav _1(s)-\bav _2(s)\vert^2_{H^{-\delta_0}_2}\right]
\\&{}+
C(\ep_1) \EE\left(\int_0^ t\vert\bav _1(s)-\bav _2(s)\vert^2_{H^{-\delta_0}_2}\, ds\right)
\\
&{}+
\ep_2 \EE\left[\sup_{0\le s\le t}\vert\bau _1(s)-\bau _2(s)\vert^2_{H^{-1}_2}\right]\EE \Vert\mathbbm{1}_{[0,t]}\bav _1\Vert_{\mathbb{H}_{\rho}}^2
\\&+ C(\ep_2)\EE\int_0^ t\sup_{0\le r\le s}  \vert\bau _1(r)-\bau _2(r)\vert^2_{H^{-1}_2}\,\, ds
.
\end{align*} 
Taking into account the definition of $\tau_N$ we obtain
\begin{align} \label{oben11}
\lqq{\Psi(t)\le  \EE \vert\bav  _1(0)- \bav _2(0)\vert^2_{H^{-\delta_0}_2}}\nonumber
&
\\
&{}+\ep_1 N^2 \Psi(t)+\ep_2N\Phi(t)+ C(\ep_1) \int_0^ t \Phi(s)\, ds+C(\ep_2)\int_0^ t \Psi(s)\, ds.
\end{align} 

\paragraph{Step IV.}
Noting that the estimates can be extended to the whole interval $[0,T]$ by standard calculations.
Next, collecting altogether  we know, that for any $\ep_1,\ep_2,\ep_3,\ep_4>0$ there exist constants $C(\ep_1),C(\ep_2),C(\ep_3),C(\ep_3)>0$ such that
\begin{align*} 
\lqq{\Phi(t)+\Psi(t) \le  \EE \vert\bau_1(0)-\bau_2(0)\vert^2_{H^{-1}_2}+ \EE\vert\bav  _1(0)-\bav  _2(0)\vert^2_{H^{-\delta_0}_2}}
&
\\
&{}+
\ep_1N^2\Psi(t) + C(\ep_1)\int_0^t \Psi(s)\, ds
+  \ep_0 N \Phi(t)
+C(\ep_0)\int_0^ t  \Phi(s)\, ds
\\
&{}+\ep_3 N^2 \Psi(t)+\ep_4N\Phi(t)+ C(\ep_1) \int_0^ t \Phi(s)\, ds+C(\ep_2)\int_0^ t \Psi(s)\, ds.
\end{align*} 
Taking the  $\ep_1,\ep_2,\ep_3,\ep_4>0$ accordingly, we know, that for any $N\in\NN$  there exists a constant $C(N)>0$ such that
\begin{align*} 
&\Phi(t)+\Psi(t) \le   \EE\vert\bau_1(0)-\bau_2(0)\vert^2_{H^{-1}_2}+ \EE\vert\bav  _1(0)-\bav _2(0)\vert^2_{H^{-\delta_0}_2}\\
+& C(N)\int_0^ t \left( \Phi(s)+\Psi(s)\right)\, ds.
\end{align*} 
An application of the Gronwall lemma and taking into account that $v_1(0)=v_2(0)$, $u_1(0)=u_2(0)$ give that
$\Phi(t)\le 0$ and $\Psi(t)\le 0$.

\paragraph{Step V.}
We show that $\PP\left( \tau_N<T\right) \longrightarrow 0$ as $N\to\infty$.
Therefore,
\begin{align*} 
\lqq{\PP\left( \tau_N <  T \right) }
&\\\le&
\PP\left( \Vert\bar u_1\Vert_{L^{2\gamma}(0,T;H^{\delta_0}_2)}\ge N \right) + \PP\left(  \Vert\bar u_2\Vert_{L^{2\gamma}(0,T;H^{\delta_0}_2)}\ge N \right)
\\
&\qquad{}+  \PP\left( \Vert \bar v_1\Vert_{\mathbb{H}_\rho}\ge N \right)+ \PP\left( \Vert\bar v_2\Vert_{\mathbb{H}_\rho}\ge N \right)
.\end{align*} 
Due to Remark \ref{muss_noch}, we can apply the Chebyshev inequality and
get by the above that
\begin{align*} 
\PP\left( \tau_N <  T \right) \le\frac C{N^2},
\end{align*} 
It follows that
$$\PP\left( \tau_N \le   T \right)\longrightarrow 0,$$
as $N\to\infty$.
Hence,  both the processes $\bau_1$ and $\bau_2$, and likewise $\bav_1$ and $\bav_2$ are indistinguishable on $[0,T]$.
Since the processes $(\bau_1,\bav_1)$ and $(\bau_2,\bav_2)$ solves on $[0,T\wedge \tau_N]$ the system
 corresponding  to   \eqref{eq:uIto}--\eqref{eq:vIto},
the last arguments  completes the proof of Theorem \ref{thm_path_uniq}.
\end{proof}

\appendix

\section{Some useful inequalities}\label{app:A}
\begin{lemma}\label{lem:porous-medium-inequality}

  Let $\gamma>1$, $x,y\in\mathbb{R}$. Then it holds that
\begin{equation}\label{eq:porous-medium-inequality}
(x^{[\gamma]}-y^{[\gamma]})(x-y)\ge2^{1-\gamma}\vert x-y\vert^{\gamma+1},\end{equation}
where $z^{[\gamma]}:=\vert z\vert^{\gamma-1}z$ for $z\in\mathbb{R}$.
\end{lemma}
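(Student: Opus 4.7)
The plan is to use the integral representation
\[
x^{[\gamma]}-y^{[\gamma]} \;=\; \gamma\,(x-y)\int_{0}^{1}\bigl|y+t(x-y)\bigr|^{\gamma-1}\,dt,
\]
which follows by applying the fundamental theorem of calculus to the $C^{1}$ function $t\mapsto|y+t(x-y)|^{\gamma-1}(y+t(x-y))$ on $[0,1]$ (note $\gamma>1$ ensures the derivative is continuous even across the zero of the argument). Multiplying by $(x-y)$ reduces the claim to showing
\[
\int_{0}^{1}\bigl|y+t(x-y)\bigr|^{\gamma-1}\,dt \;\ge\; \frac{|x-y|^{\gamma-1}}{\gamma\,2^{\gamma-1}}.
\]
Without loss of generality I may assume $x>y$ (the inequality is symmetric under swapping $x\leftrightarrow y$, and trivial if $x=y$), and by the odd symmetry $(-z)^{[\gamma]}=-z^{[\gamma]}$ I may further reduce to the two cases: (a) $x\ge y\ge 0$, and (b) $x\ge 0\ge y$.

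In case (a), the substitution $s=y+t(x-y)$ gives
\[
\int_{0}^{1}(y+t(x-y))^{\gamma-1}\,dt \;=\; \frac{x^{\gamma}-y^{\gamma}}{\gamma(x-y)} \;\ge\; \frac{(x-y)^{\gamma-1}}{\gamma},
\]
where the last step uses superadditivity $(a+b)^{\gamma}\ge a^{\gamma}+b^{\gamma}$ for $a,b\ge 0$ and $\gamma\ge 1$ (applied with $a=x-y$, $b=y$). This bound is already larger than $2^{1-\gamma}(x-y)^{\gamma-1}/\gamma$, so case (a) is done.

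In case (b), the argument $y+t(x-y)$ vanishes at $t^{*}=|y|/(|x|+|y|)\in[0,1]$, so I split the integral there. The same substitution on each piece yields
\[
\int_{0}^{1}\bigl|y+t(x-y)\bigr|^{\gamma-1}\,dt \;=\; \frac{|x|^{\gamma}+|y|^{\gamma}}{\gamma(|x|+|y|)}.
\]
Since $|x-y|=|x|+|y|$ in this case, the desired lower bound becomes equivalent to
\[
|x|^{\gamma}+|y|^{\gamma}\;\ge\; 2^{1-\gamma}\bigl(|x|+|y|\bigr)^{\gamma},
\]
which is exactly the convexity inequality for $t\mapsto t^{\gamma}$ on $[0,\infty)$ (i.e.\ the midpoint estimate $((|x|+|y|)/2)^{\gamma}\le(|x|^{\gamma}+|y|^{\gamma})/2$). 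Combining cases (a) and (b) proves the inequality, and the constant $2^{1-\gamma}$ is sharp, attained in the limit $y=-x$.

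There is no real obstacle here beyond keeping track of the signs: the only delicate point is the opposite-sign case, which is precisely where the factor $2^{1-\gamma}$ arises and which determines sharpness.
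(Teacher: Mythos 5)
Your proof is correct, and every step checks out: the integral representation for $z\mapsto z^{[\gamma]}$ is valid because that map is $C^1$ with derivative $\gamma|z|^{\gamma-1}$ for $\gamma>1$; the reduction to cases (a) and (b) via the equivariance $(x,y)\mapsto(-x,-y)$ is sound (the third case $0>x>y$ maps into case (a)); the substitution computations $\frac{x^\gamma-y^\gamma}{\gamma(x-y)}$ and $\frac{|x|^\gamma+|y|^\gamma}{\gamma(|x|+|y|)}$ are right; superadditivity of $t\mapsto t^\gamma$ handles case (a), convexity handles case (b); and you correctly identify that the constant $2^{1-\gamma}$ is sharp at $y=-x$. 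The paper, however, does not present an argument at all here: the proof of Lemma \ref{lem:porous-medium-inequality} consists of a single citation to Liu, Lemma 3.1 in \cite{Liu:2009ih}. So there is nothing in the text to compare against directly. What you supply is a standard, self-contained derivation via the mean-value integral representation for the odd power map plus a convexity/superadditivity dichotomy depending on the sign of $xy$, which is one of the usual ways this $p$-Laplacian-type monotonicity bound is proved in the literature. The benefit of your version is that it isolates exactly where the factor $2^{1-\gamma}$ comes from (the opposite-sign case, via the midpoint convexity estimate), whereas the paper simply defers to a reference.
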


\begin{proof}
See e.g. \cite[Lemma 3.1]{Liu:2009ih}.
\end{proof}

Fix a bounded domain $\Ocal\subset\mathbb{R}^d$ with sufficiently smooth boundary.

\begin{proposition}\label{interpolation_11}
For any for $r\in(2,q+1)$, $m\in(q+1,\infty)$, $s\in(0,1)$, with  $\frac 1r\ge\frac 1{m}-\frac s2$, and $\frac 1m\ge\frac {1+s}{q+1}$, there exists a constant $C>0$ such that
\begin{align} \label{ineq001_1}
\Vert\xi\Vert_{L^m(0,T;H^s_r)}^r\le C\left( \Vert\xi\Vert^2_{L^\infty(0,T;H_2^{-1})}+\Vert\xi\Vert^{q+1}_{L^{q+1}(0,T;L^{q+1})}\right).
\end{align} 
\end{proposition}

\begin{proof}
In order that
$L^m(0,T;H^s_r(\CO))$ is an
interpolation space between $$
L^\infty(0,T;H_2^{-1}(\CO))\quad\text{and}\quad L^{q+1}(0,T;L^{q+1}(\CO)),
$$
we need that there exists $\theta\in(0,1)$ such that for the parameters $m,r,s$ the following inequalities are satisfied, see e.g. \cite{bergh},
\begin{align*}
s&\le-\theta,
\\
\frac 1m &\ge\frac{1-\theta}{q+1},
\\
\frac 1r &\ge\frac \theta 2 + \frac{1-\theta}{q+1}.
\end{align*}
Now, if  $\frac 1r\ge\frac 1m-\frac s2$ and $\frac 1m\ge\frac {1+s}{q+1}$ are satisfied for  $r\in(2,q+1)$, $m\in(q+1,\infty)$, $s\in(0,1)$,
then the set of inequalities are satisfied and
we obtain \eqref{ineq001_1}.
\end{proof}

\begin{proposition}\label{prop:runst190}
Let $s_1,s_2\in\mathbb{R}$ and $p>1$. Let $\Ocal\subset\mathbb{R}^d$.
Assume that $s_1\le s_2$ and that $s_1+s_2>d\left(0\vee \left(\frac{2}{p}-1\right)\right)$ and $s_2<\frac{d}{p}$. Then there exists a constant $C>0$ such that
\[\vert uv\vert_{H^r_p}\le C\vert u\vert_{H_p^{s_1}}\vert v\vert_{H_p^{s_2}},\]
for any $r\le s_1+s_2-\frac{d}{p}$ and for any $u\in H_p^{s_1}(\Ocal)$ and any $v\in H_p^{s_2}(\Ocal)$.
\end{proposition}
\begin{proof}
See \cite[p. 190, Theorem 1 (iii)]{runst}.
\end{proof}

For the definition of the space $F^{s}_{p,q}$, we refer to \cite{sickel1,triebel,runst}. It translates to classical function spaces
as in e.g. \cite[Remark 2.1.1]{sickel1}, in particular, $F_{p,2}^{0}=L^p$, $1<p<\infty$ (Lebesgue spaces), $F_{p,2}^{m}=W^m_p$, $m\in\N$ (Sobolev spaces), $1<p<\infty$ and $F_{p,2}^{s}=H^s_p$, $s\in\mathbb{R}$, $1<p<\infty$ (fractional Sobolev spaces).

Next, we shall record a variant of the Stroock-Varopoulos inequality together with its proof. See e.g. \cite[Lemma 3.6]{DGG2020} for another version of this result.

\begin{proposition}\label{runst1}
For any bounded domain $\Ocal\subset\mathbb{R}^d$ with sufficiently smooth boundary, for any $T>0$, $\gamma>1$, $\theta\in(0,\frac{1}{\gamma})$, there exists a constant $C=C(\gamma,T,\Ocal)>0$ such that
\begin{align*} 
\Vert\eta\Vert_{L^{2\gamma}(0,T;H^{\theta}_{2\gamma})}^{2\gamma} \le C\left(\int_0^T \vert\eta(s)^{[\gamma-1]}\nabla \eta(s)\vert_{L^2}^2\,ds+\int_0^T\vert\eta(s)\vert^{2\gamma}_{L^2}\,ds\right),
\end{align*} 
where $z^{[\gamma-1]}:=\vert z\vert^{\gamma-2}z$ for $z\in\mathbb{R}$.
\end{proposition}

\begin{proof}
By \cite[p.~365]{runst}, we have for any  $p\in(1,\infty)$, $s\in(0,1)$, $\mu\in(0,1)$ and $\ep\in(0,s\mu)$,
	\begin{equation}\label{from_runst}
\vert \vert w\vert^\mu \vert_{H^{s\mu-\ep}_{\frac{p}{\mu}}}  =	\vert \vert w\vert^\mu\vert_{F^{s\mu-\ep}_{\frac p\mu,2}} \le C	\vert \vert w\vert^\mu \vert_{F_{\frac p\mu,\frac 2\mu}^{s\mu}}\le C	 \vert w\vert_{ F_{p, 2 }^s} ^\mu = C\vert w\vert_{H^{s}_p}^\mu, \quad w\in H^s_p(\CO).
	\end{equation}
		From \eqref{from_runst} we know that  for any  $\gamma>1$, $p\in (1,\infty)$, $\theta\in(0,\frac 1\gamma)$, $\ep>0$, there exists a constant $C>0$ such that
		$$
	 \vert w\vert_{H^\theta_{p\gamma}}=\vert\vert w^{\gamma} \vert^\frac 1\gamma \vert_{H^\theta_{p\gamma}}\leq C \vert\vert w\vert^\gamma\vert^{\frac 1\gamma}_{H^{\theta \gamma+\ep}_{p}}.$$
In particular, for any  $0<\theta<\frac 1 \gamma$ and $p=2$, there exists a constant $C>0$ such that
		$$	
	 \vert w\vert_{H^\theta_{2\gamma}}^\gamma \le C\vert\vert w\vert^\gamma\vert_{H^1_2}.
		$$
Since we know by the chain rule,
$$\int_0^ T \vert\nabla \vert\eta(s)\vert ^{\gamma}\vert_{L^2}^2\, ds=\gamma^2\int_0^ T \vert\eta^{[\gamma-1]}(s)\nabla \eta(s) \vert_{L^2}^2\, ds
$$
we know that for any $\theta<\frac 1\gamma$,
$$\int_0^ T \vert\eta(s)\vert_{H^\theta_{2\gamma}}^{2\gamma}\, ds\le C \left(\int_0^ T \vert\eta ^{\gamma-1}(s)\nabla \eta(s) \vert_{L^2}^2\, ds+\int_0^T \vert\eta(s)\vert^{2\gamma}_{L^2}\,ds\right).$$
\end{proof}

Clearly, we have by the fractional Rellich-Kondrachov theorem that for any $\ep\in (0,\theta)$,
$$\vert w\vert_{H^{\theta-\ep}_{2}}^\gamma\le C\vert w\vert_{H^\theta_{2\gamma}}^\gamma.$$
	As $\theta\in (0,\gamma^{-1})$ is arbitrary, this yields the following result.

\begin{corollary}\label{stroock}
For any bounded domain $\Ocal\subset\mathbb{R}^d$ with sufficiently smooth boundary, for any $T>0$, $\gamma>1$, $\theta\in(0,\frac{1}{\gamma})$, there exists a constant $C=C(\gamma,T,\Ocal)>0$ such that
\begin{align*} 
\Vert\eta\Vert_{L^{2\gamma}(0,T;H^{\theta}_{2})}^{2\gamma} \le C\left(\int_0^T \vert\eta(s)^{[\gamma-1]}\nabla \eta(s)\vert_{L^2}^2\, ds+\int_0^T\vert\eta(s)\vert^{2\gamma}_{L^2}\,ds\right),
\end{align*} 
where $z^{[\gamma-1]}:=\vert z\vert^{\gamma-2}z$ for $z\in\mathbb{R}$.
\end{corollary}

We have the following embedding.
\begin{proposition}\label{interp_rho}
Let $l_1,l_2\in(2,\infty)$ with $\frac{d}{2}-\rho\le \frac 2{l_1}+\frac d{l_2}$.
Then there exists $C>0$ such that
$$
\Vert\xi\Vert_{L^{l_1}(0,T;L^{l_2})}\le C\Vert\xi\Vert_{\BH},\quad \xi\in\BH,
$$
where $\BH$ is defined in \eqref{eq:BZeq}.
\end{proposition}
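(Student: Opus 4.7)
The plan is to combine a spatial interpolation between $H^\rho_2(\Ocal)$ and $H^{\rho+1}_2(\Ocal)$ with a Sobolev embedding into $L^{l_2}(\Ocal)$, and then pull the time $L^\infty$-bound out of the integral while using the remaining $L^2$-in-time factor. The choice of interpolation parameter will be dictated by matching the exponent $l_1$ in time, and the Sobolev embedding threshold will be exactly the hypothesis.

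First, set $\theta := 2/l_1 \in (0,1)$ (this uses $l_1 > 2$). By the standard interpolation of Bessel potential spaces, for a.e. $t\in[0,T]$,
\[
\|\xi(t)\|_{H^{\rho+\theta}_2} \;\le\; C\, \|\xi(t)\|_{H^\rho_2}^{1-\theta}\, \|\xi(t)\|_{H^{\rho+1}_2}^{\theta}.
\]
Next, the hypothesis $\frac{d}{2}-\rho \le \frac{2}{l_1} + \frac{d}{l_2}$ rewrites as
\[
\rho + \theta \;\ge\; \frac{d}{2} - \frac{d}{l_2},
\]
which is precisely the Sobolev embedding condition $H^{\rho+\theta}_2(\Ocal) \hookrightarrow L^{l_2}(\Ocal)$ (valid on the bounded $C^2$-domain $\Ocal$ or the torus, and for $l_2 \in (2,\infty)$). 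Combining the two bounds yields
\[
\|\xi(t)\|_{L^{l_2}} \;\le\; C\, \|\xi(t)\|_{H^\rho_2}^{1-\theta}\, \|\xi(t)\|_{H^{\rho+1}_2}^{\theta}.
\]

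Finally, raise to the $l_1$-th power and integrate in time. Since $\theta l_1 = 2$,
\[
\int_0^T \|\xi(t)\|_{L^{l_2}}^{l_1}\, dt \;\le\; C\, \|\xi\|_{L^\infty(0,T;H^\rho_2)}^{(1-\theta)l_1} \int_0^T \|\xi(t)\|_{H^{\rho+1}_2}^{2}\, dt,
\]
and taking the $l_1$-th root gives the claimed estimate with a constant depending only on $l_1$, $l_2$, $\rho$, $d$ and $\Ocal$. There is no real obstacle: the argument is a two-step application of interpolation plus Sobolev embedding, and the only point requiring care is verifying that the critical endpoint Sobolev embedding $H^{\rho+\theta}_2 \hookrightarrow L^{l_2}$ is admissible when the hypothesis holds with equality; this is standard for Bessel potential spaces on $\Ocal$ with $l_2<\infty$.
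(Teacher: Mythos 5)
Your plan is correct and it follows the same route as the paper: Sobolev embedding $H^{\rho+\theta}_2(\Ocal)\hookrightarrow L^{l_2}(\Ocal)$ combined with complex interpolation between $H^\rho_2$ and $H^{\rho+1}_2$, with the time exponent matched via $\theta=2/l_1$. The only difference is cosmetic: you interpolate the Bessel potential norms pointwise in $t$ and then integrate with Hölder, whereas the paper invokes vector--valued interpolation to pass from $L^\infty(0,T;H^\rho_2)$ and $L^2(0,T;H^{\rho+1}_2)$ directly to $L^{l_1}(0,T;H^\delta_2)$; the resulting estimates and parameter bookkeeping coincide, and the last Young step $a^{1-\theta}b^{\theta}\le a+b$ yields the $\|\cdot\|_{\BH}$ bound in both cases.
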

\begin{proof}
First, let us note that for $\delta>0$ such that
\begin{equation}\label{eq:deltaembedding}\frac {1}{l_2}\ge \frac{1}{2}-\frac{\delta}{d},
\end{equation}
we have the embedding $H^{\delta}_2(\CO)\hookrightarrow L^{l_2}(\CO)$,
and therefore
$$
\Vert\xi\Vert_{L^{l_1}(0,T;L^{l_2})}\le C\Vert\xi\Vert_{L^{l_1}(0,T;H^{\delta}_2)},\quad \xi\in\X.
$$
Due to interpolation, compare with \cite[Theorem 5.1.2, p.\ 107 and Theorem 6.4.5, p.\ 152]{bergh}, we have
$$
 \Vert\xi\Vert_{L^{l_1}(0,T;H^{\delta}_2)}\le  C\Vert\xi\Vert_{L^\infty(0,T;H^{\rho}_2)}^\theta \Vert\xi\Vert_{L^2(0,T;H^{\rho+1}_2)}^{1-\theta},\quad \xi\in\BH,
$$
for $\theta\in (0,1)$ with,
\begin{equation}\label{eq:delta_theta}\frac  1{l_1}\le \frac{1}{2}(1-\theta),\quad \delta\le\theta\rho+(1-\theta)(\rho+1).
\end{equation}
Taking into account that we have
\[\frac{d}{2}-\rho\le \frac 2{l_1}+\frac d{l_2},\]
gives that $\delta$ and $\theta$ satisfying \eqref{eq:deltaembedding} and \eqref{eq:delta_theta} exist and thus the Young inequality for products yields the assertion.
\end{proof}

\section{Function spaces and the Aubin-Lions-Simon compactness theorem}\label{dbouley-space}

Let $B$ be a separable Banach space, $0\le c<d<\infty$.
Let $C^{(\beta)}_b(c,d;B)$ denote a set of all continuous and bounded functions $u:[c,d]\to B$ such that
$$
\Vert u\Vert_{C_b^{\beta}(c,d;B)} :=\sup_{c\le t\le d} \vert u(t)\vert_B +\sup_{c\le s,t\le d\atop t\not= s} \frac{ \vert u(t)-u(s)\vert_{B}}{\vert t-s\vert^\beta},
$$
is finite. The space $C_b^{(\beta)}(c,d;E)$  endowed with the norm $\Vert \cdot\Vert_{C_b^{\beta}(c,d;B)}$ is a Banach space.
Let
 $$ L^p(c,d; ;B)=\left\{ u:[c,d)\to B: \mbox{ $u$ measurable and  }\int_{[c,d)}  \vert u(t)\vert_{B} ^ p\, dt <\infty\right\}.
$$
In addition, for $1< p < \infty$  let $W^1_p(\CO)$ be the standard Sobolev space defined by (compare \cite[p.\ 263]{Brezis})
\begin{align*} 
\lqq{W^{1}_p(\CO)
:=\left\{ u\in L^p(\CO)\mid \exists g_1,\cdots,g_d\in L^p(\CO)\mbox{ such that }\phantom{\bigg\vert}\right.}
\\
&{}\left. \int_\CO u(x)\frac{\partial \phi(x)}{\partial x_i} \, dx =-\int_\CO g_i(x)\phi(x)\, dx \quad \forall \phi\in C^\infty_0(\Ocal), \forall i=1,\ldots ,d \right\}
\end{align*} 
equipped with norm
$$
\vert u\vert_{W^1_p}:=\vert u\vert_{L^p}+\sum_{j=1}^d\left\vert\frac{\partial u}{\partial x_j}\right\vert_{L^p},\quad u\in W^1_p(\CO).
$$
Given an integer $m\ge2$ and a real number $1\le p<\infty$, we define by induction the space
\begin{align*} 
W^{m}_p(\CO) :=\left\{ u\in W^{m-1}_p(\CO)\mid D u\in W^{m-1}_p(\CO) \right\}
\end{align*} 
equipped with norm
$$
\vert u\vert_{W^m_p}:=\vert u\vert_{L^p}+\sum_{\alpha =1}^m\left\vert D_\alpha u\right\vert_{L^p},\quad u\in W^m_p(\CO).
$$
Let $H_2^m(\CO):=W^m_2(\CO)$, and for $\rho\in(0,1)$ let $H_2^\rho (\CO)$ be the real interpolation  space given by $H^\rho _2(\CO):=(L^2(\CO),H^1_2(\CO))_{\rho ,2}$.
In addition, let $H^{-1}_2(\CO)$ be the dual space of $H^1_2(\CO)$ and for $\rho \in(0,1)$ let $H^{-\rho }_2(\CO)$ be the  real interpolation  space given by $H^{-\rho }_2(\CO):=(L^2(\CO),H^{-1}_2(\CO))_{1-\rho ,2}$.
Note, by Theorem 3.7.1 \cite{bergh}, $H^{-\rho }_2(\CO)$ is dual to $H^\rho _2(\CO)$, $\rho \in(0,1)$. Furthermore, we have  $(H^{-\rho }_2(\CO),H^{\rho }_2(\CO))_{\frac 12,2}=L^2(\CO)$ and $(H^\alpha_2(\CO),H^\beta_2(\CO))_{\rho,2}=H^\theta_2(\CO)$ for $\theta=\alpha(1-\rho)+\beta\rho$, $\rho\in(0,1)$ and $\vert\alpha\vert,\vert\beta\vert\le 1$.

Since we need it to tackle the compactness, let us introduce the following space.
Given $p\in (1,\infty)$, $\alpha\in(0,1)$, let $\WW ^ {\alpha}_p (I;B)$ be the Sobolev space
of all $u\in L^p(0,\infty;B)$ such that
$$
\int_I  \int_{I\cap [t,t+1]}   \;\frac{\vert u(t)-u(s)\vert_B ^ p}{ \vert t-s\vert ^ {1+\alpha p}}\,ds\,dt<\infty;
$$
equipped with the norm
$$
\left\Vert u\right\Vert_{ \WW^ {\alpha}_p(I ;B)}:=\left( \int_I  \int_{I\cap [t,t+1]}  \;\frac{\vert u(t)-u(s)\vert_B ^ p}{ \vert t-s\vert ^ {1+\alpha p}}\,ds\,dt\right) ^ \frac 1p.
$$

\begin{theorem}\label{th-gutman}
Let $B_0\subset B\subset B_1$ be Banach spaces, $B_0$ and $B_1$ reflexive, with compact embedding of $B_0$ to $B$. Let $p\in(1,\infty)$ and $\alpha\in(0,1)$ be given. Let $X$ be the space
$$
X=L^p(0,T;B_0)\cap \WW ^{\alpha}_p (0,T;B_1).
$$
Then the embedding of $X$ to $L^p(0,T;B)$
is compact.
\end{theorem}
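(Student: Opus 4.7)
The plan is to verify the Riesz--Fréchet--Kolmogorov compactness criterion in $L^p(0,T;B)$. Concretely, I will take an arbitrary bounded sequence $(u_n)_{n\in\N}$ in $X$ and show that it admits a subsequence that converges strongly in $L^p(0,T;B)$. Since $B_0$ and $B_1$ are reflexive, so are $L^p(0,T;B_0)$ and $\WW^\alpha_p(0,T;B_1)$ (by Kahane--Khintchine for Bochner spaces), and hence by the Banach--Alaoglu theorem, a (nonrelabelled) subsequence $u_n\rightharpoonup u$ weakly in $L^p(0,T;B_0)$ and in $\WW^\alpha_p(0,T;B_1)$. Replacing $u_n$ by $u_n-u$, it suffices to treat the case $u=0$ and prove that $u_n\to 0$ strongly in $L^p(0,T;B)$.

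The central analytic tool is the Ehrling--Lions lemma: since the embedding $B_0\hookrightarrow B$ is compact and $B\hookrightarrow B_1$ is continuous, for every $\eps>0$ there exists $C(\eps)>0$ such that
\[
|x|_B\le \eps\,|x|_{B_0}+C(\eps)\,|x|_{B_1},\qquad x\in B_0.
\]
Integrating the $p$-th power over $[0,T]$ gives, for every $\eps>0$,
\[
\|u_n\|_{L^p(0,T;B)}^p\le 2^{p-1}\eps^p\|u_n\|_{L^p(0,T;B_0)}^p+2^{p-1}C(\eps)^p\|u_n\|_{L^p(0,T;B_1)}^p.
\]
Because $\|u_n\|_{L^p(0,T;B_0)}$ is uniformly bounded, the first term can be made arbitrarily small by choosing $\eps$ small; thus the proof reduces to showing that $u_n\to 0$ strongly in $L^p(0,T;B_1)$.

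For the strong convergence in $L^p(0,T;B_1)$, I would apply the vector-valued Riesz--Fréchet--Kolmogorov criterion: one needs (a) an integrable majorant for the values $u_n(t)$ in $B_1$, which follows from boundedness in $L^p(0,T;B_1)$, and (b) equicontinuity of time translations, i.e.\
\[
\sup_{n}\int_{0}^{T-h}\|u_n(t+h)-u_n(t)\|_{B_1}^p\,dt\longrightarrow 0\quad\text{as}\quad h\downarrow 0.
\]
The boundedness of $\|u_n\|_{\WW^\alpha_p(0,T;B_1)}$ controls precisely a Besov-type seminorm of order $\alpha$ in time with values in $B_1$, so this equicontinuity follows from a direct Fubini argument: integrating $|u_n(t+h)-u_n(t)|_{B_1}^p$ against $h^{-1-\alpha p}$ over $h\in(0,1)$ bounds the double integral defining $\WW^\alpha_p$, and conversely a standard estimate gives an $O(h^{\alpha p})$ modulus on the translation seminorm uniformly in $n$. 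Pointwise (a.e.~$t$) convergence of a further subsequence in $B_1$, obtained from the compactness of $B_0\hookrightarrow B\hookrightarrow B_1$ combined with weak convergence in $B_0$, together with the uniform integrability supplied by the $L^p(0,T;B_1)$ bound (via Vitali), then yields strong convergence $u_n\to 0$ in $L^p(0,T;B_1)$.

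The main obstacle I anticipate is the interplay between the fractional time-regularity measured in the weak space $B_1$ and the compactness assumption in the stronger space $B_0$: one must extract enough pointwise (or averaged) compactness in $B$ from the interpolation between these two scales. The Ehrling lemma neatly bridges this gap, but the careful verification that the $\WW^\alpha_p$-seminorm indeed controls the modulus of translation (without losing a factor that blows up as $T$ is approached) and that Vitali's theorem applies require a technical but standard Fubini and measurable-selection argument. The rest of the proof is a clean assembly of these ingredients.
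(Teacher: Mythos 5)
The paper does not prove this result; it cites Simon (\emph{Compact sets in the space $L^p(0,T;B)$}, Corollary 5, p.~86) and Flandoli--G\k{a}tarek (Theorem 2.1). Your proposal follows a standard and viable route --- Ehrling's lemma to reduce the target space from $B$ to $B_1$, followed by a Riesz--Fr\'echet--Kolmogorov argument in $L^p(0,T;B_1)$ --- and your derivation of the translation equicontinuity from the $\WW^\alpha_p$ bound via the averaging/Fubini argument is correct. However, there is a genuine gap in the last step.

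The vector-valued Riesz--Fr\'echet--Kolmogorov criterion in $L^p(0,T;B_1)$ does \emph{not} follow from your conditions (a) an integrable majorant plus (b) translation equicontinuity. When $B_1$ is infinite-dimensional, one must additionally establish precompactness in $B_1$ of the family of time averages $\bigl\{\int_{t_1}^{t_2}u_n(t)\,dt : n\in\N\bigr\}$ for each $0<t_1<t_2<T$ (this is exactly Simon's Theorem 1, and it is the ingredient that fails without the compact embedding). Your (a) --- a uniform bound in $L^p(0,T;B_1)$ --- is far from a substitute for it. Your proposed patch, that weak convergence $u_n\rightharpoonup 0$ in $L^p(0,T;B_0)$ together with compactness of $B_0\hookrightarrow B_1$ yields pointwise a.e.\ convergence $u_n(t)\to 0$ in $B_1$, is false: weak convergence of $u_n$ in $L^p(0,T;B_0)$ gives no pointwise (even weak) convergence of $u_n(t)$ for a.e.\ $t$; the usual counterexample is $u_n(t)=\sin(nt)\,e_1$ for a fixed unit vector $e_1\in B_0$. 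Consequently the invocation of Vitali's theorem at that point is not justified, and $u_n\to 0$ strongly in $L^p(0,T;B_1)$ is not yet established.

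The repair is straightforward and keeps your overall structure: by H\"older, the averages $\int_{t_1}^{t_2}u_n(t)\,dt$ are bounded in $B_0$, hence precompact in $B$ and \emph{a fortiori} in $B_1$. Combined with the translation equicontinuity you already proved, Simon's criterion gives relative compactness of $\{u_n\}$ in $L^p(0,T;B_1)$; passing to a further subsequence that converges strongly in $L^p(0,T;B_1)$ and using that the weak limit is $0$ identifies the strong limit as $0$. Then your Ehrling step closes the argument.
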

\begin{proof}
See \cite[p. 86, Corollary 5]{Simon1986} or \cite[Theorem 2.1]{franco}.
\end{proof}

\section{On the Burkholder-Davis-Gundy inequality}\label{sec:BDG}

We collect the exact form of the Burkholder-Davis-Gundy inequality needed here. Given a cylindrical Wiener process $W$ on $H^\delta_2(\CO)$,
$\delta>1$, over $\MA=(\Omega,\Fcal,\mathbb{F},\PP)$, and a progressively measurable process $\xi\in \mathcal{M}^2_\MA(H^\rho_2(\CO))$,
$\rho\in[0,\frac 12]$,
let us define $\{Y(t):t\in[0,T]\}$ by
 $$
 Y(t):=\int_0^ t \xi(s)\, dW(s), \quad t\in[0,T].
 $$
Here, for each $t\in[0,T]$, $\xi(t)$ is interpreted as a multiplication operator acting on the
elements of $H^\delta_2(\Ocal)$, namely, $\xi:H_2^\delta(\Ocal)\ni\psi \mapsto \xi\psi\in \CS'(\Ocal)$.
Since for any $\nu>\frac 12$ and for any~$\varphi\in H^\nu_2(\CO)$
the product~$\xi(t)\varphi$ belongs to~$H^\rho_2(\CO)$ by Proposition \ref{prop:runst190}, we can view~$\xi(t)$ as a linear map from~$H^\nu_2(\CO)$ into~$H^\rho_2(\CO)$.
It is shown in Proposition \ref{prop:runst190} that
\begin{equation}\label{equ:runst}
\vert\xi(t)\varphi\vert_{H^\rho_2}
\le
C
\vert\xi(t)\vert_{H^\rho_2}
\vert\varphi\vert_{H^\nu_2}
\end{equation}
where the constant $C>0$  is independent of~$\varphi$.
Consequently, for any $p\ge 1$, $\delta > 1 $, and any $\rho\in[0,\frac 12]$
\begin{equation*}\label{BDG}
\EE \left[\sup_{t \in [0, T]} \vert Y(t)\vert^p_{H^\rho_2}\right] \leq C \, \EE \,\left[ \int_0^T \vert\xi(t)\vert^2_{L_\text{HS}(H^{\delta}_2, H^\rho_2 )}\, dt\right]^\frac p2,
\end{equation*}
where $\vert\cdot\vert_{L_\text{HS}(H^{\delta}_2,H^{\rho}_2)}$ denotes the Hilbert--Schmidt norm from
$H^{\delta}_2(\CO)$ to $H^{\rho}_2(\CO)$.
First, let us note that for $\delta>1$ there exists a $\nu>\frac 12 $ such that the embedding
$H^{\delta}_2(\CO)\hookrightarrow H^\nu_2(\CO)$ is a Hilbert--Schmidt operator.
Using the fact that  $\{\psi_k^{(\delta)}:k\in\mathbb{Z} \}$ is an orthonormal basis of $H^{\delta}_2(\CO)$,
and $\psi_k^{(\delta)}=\lambda_k \psi_k$ we obtain, by using~\eqref{equ:runst},
\begin{align}\label{e lhs}
\vert\xi(s)\vert^2_{L_\text{HS}(H^{\delta}_2, H^\rho_2)}
&
=\sum_{k \in \mathbb{Z} } \left\vert\xi(s)\psi_k^{(\delta)}\right\vert^2_{ H^\rho_2}
\le C\sum_{k \in \mathbb{Z} } \left\vert\xi(s)\right\vert_{H^\rho_2}^2\left\vert\psi_k^{(\delta)}\right\vert^2_{H^{\nu}_2}\\
&=
C \vert\xi(s)\vert^2_{ H^\rho_2}  \sum_{k \in \mathbb{Z} }\vert\psi_k^{(\delta)}\vert^2_{ H^\nu_2}
.\nonumber
\end{align}
If the embedding $H^{\delta}_2(\CO) \hookrightarrow H^{\rho}_2(\CO)$ is supposed to be  a Hilbert--Schmidt,
the right hand side of \eqref{e lhs} is finite
and we obtain
\begin{equation}\label{HSnorm}
\EE \left[\sup_{t \in [0, T]} \vert Y(t)\vert^p_{H^\rho_2}\right] \leq C \, \EE \,\left[ \int_0^T
\vert\xi(t)\vert^2_{H^\rho_2}\, dt \right]^\frac p2.
\end{equation}
In case $\rho\ge \frac12$, we use the fact that $H^\rho_2(\CO)$ is an algebra and obtain
\begin{align}\label{eq lhs fin}
\vert\xi(s)\vert^2_{L_\text{HS}(H^{\delta}_2, H^\rho_2)}
&
=
\sum_{k \in \mathbb{Z} } \left\vert\xi(s)\psi_k^{(\delta)}\right\vert^2_{ H^\rho_2}
=\vert\xi(s)\vert^2_{ H^\rho_2} \sum_{k \in \mathbb{Z} } \left\vert\psi_k^{(\delta)}\right\vert^2_{ H^\rho_2}.
\end{align}
If the embedding  $H^\delta_2(\CO)\hookrightarrow H^{\rho}_2(\CO)$ is a Hilbert--Schmidt operator,
then the right hand side of \eqref{eq lhs fin} is finite
and we obtain for $\delta>\rho+1$
\begin{equation}\label{HSnorm.1}
\EE \left[\sup_{t \in [0, T]} \vert Y(t)\vert^p_{H^\rho_2}\right] \leq C \, \EE \,\left[ \int_0^T
\vert\xi(t)\vert^2_{H^\rho_2}\, dt \right]^\frac p2.
\end{equation}

{\small

}

\end{document}